\newtheorem{thm}{Theorem}[section]
\newtheorem*{thm*}{Theorem}
\newtheorem{lemma}[thm]{Lemma}
\newtheorem{prop}[thm]{Proposition}
\newtheorem{cor}[thm]{Corollary}
\newtheorem{defi}[thm]{Definition}
\newtheorem{ques}[thm]{Question}
\theoremstyle{remark}
\newtheorem{rmk}[thm]{Remark}
\numberwithin{figure}{section}
\numberwithin{equation}{section}
\newcommand{\N}{\mathbb{N}}
\newcommand{\Z}{\mathbb{Z}}
\newcommand{\R}{\mathbb{R}}
\newcommand{\reid}{\mathcal{G} (K)}
\newcommand{\sfera}{\mathcal{G}_{S} (K)}
\newcommand{\diag}{\mathcal{D}(K)}
\newcommand{\nodi}{\mathcal{K}}
\newcommand{\Ru}{\Omega_1}
\newcommand{\Rd}{\Omega_2}
\newcommand{\Rs}{\Omega_T}
\title{The Reidemeister graph is a complete knot invariant}
\author{Agnese Barbensi and Daniele Celoria}
\date{}
\begin{document}
\begin{abstract}
We describe two locally finite graphs naturally associated to each knot type $K$, called \emph{Reidemeister graphs}. We determine several local and global properties of these graphs and prove that in one case the graph-isomorphism type is a complete knot invariant up to mirroring. Lastly, we introduce another object, relating the Reidemeister and Gordian graphs, and determine some of its properties.
\end{abstract}
\maketitle
\section{Introduction}\label{sec:intro}
The \emph{Gordian graph} is a well known graph in knot theory; its vertices are given by knot types, and two knots have an edge between them whenever they are related by a single crossing change. This graph can be thought of as describing knot theory at ``large scales".

The Gordian graph is however very ill behaved: each vertex of this graph has infinite valence, vertices at distance $2$ are connected by infinitely many distinct minimal paths \cite{baadercross}, and for every $n \ge 1$ there are embeddings of the graphs $\Z^n$ into it \cite{aboutgordian}.

This pathological nature of the Gordian graph makes it usually difficult to pinpoint its properties. For example, it is still unknown whether the Gordian graph is homogeneous \cite{budneyMO}, and figuring out the path distance between two knots is regarded as a hard problem (the computation of unknotting numbers is a subproblem).\\ 

The aim of this paper is to study the opposite point of view: instead of zooming out on the set of all knots, we will describe a way to observe ``under the microscope" each knot type.

To this end, we associate to each knot $K\subset S^3$ a graph, the \emph{Reidemeister graph} $\reid$, having a vertex for each diagram of $K$, and an edge between two diagrams whenever one can be converted into the other by a single Reidemeister move. We will make these definitions precise in the next section, but we note here that, unlike the Gordian graph, each $\reid$ is locally finite, so it is in some sense better behaved, and many of its properties can be studied through combinatorial techniques.

The definitions and proofs in this paper are quite natural and elementary in spirit: the only non-trivial tools we are going to use are Arnold's and Hass-Nowik's diagram invariants (\cite{arnold},\cite{hass2008invariants}), introduced in Section \ref{sec:grafo}. An analogous construction in a slightly different contest (cf. Section \ref{global}) has been carried out by Miyazawa in \cite{miyazawa2012distance}. Subtle differences in our initial setting will allow us to prove Theorem \ref{thm:completeness} (see also Question \ref{ques:minimo}).

The paper is structured as follows: Section \ref{sec:grafo} gives the basic definitions of the planar and $S^2$-Reidemeister graphs, collectively called $\mathcal{R}$-graphs.

In Section \ref{local} we analyse some local properties of these graphs; in particular we will classify all short paths in them (Theorem \ref{prop:lung3}), and examine the change in valence between adjacent vertices. These technical results are going to be crucial to establish the main result of the paper. As a preliminary step we will prove in Theorem \ref{distinguemosse} that the graph can detect which Reidemeister move corresponds to each of its edges, and define a related notion of \emph{diagram complexity}.

Section \ref{global} instead deals with global properties of the $\mathcal{R}$-graphs; we show that unsurprisingly they are non-planar (Proposition \ref{fig:nonplanar}) and not hyperbolic (Proposition \ref{prop:latticeembed}). In addition we show that each Reidemeister graph has only one thick end (Proposition \ref{thickend}), and compute the homology groups of an associated simplicial complex, following Miyazawa's definition (\cite{miyazawa2012distance}).

In Section \ref{s:towards} we are going to prove the main result of this paper, concerning the completeness of the $S^2$-Reidemeister graph invariant:

\begin{thm}\label{thm:completeness}
The $S^2$-Reidemeister graph is a complete knot invariant up to mirroring; that is  $\mathcal{G}_{S}(K) \equiv \mathcal{G}_{S} (K^\prime)$ \emph{iff} $K^\prime = K$ or $\overline{K}$.
\end{thm}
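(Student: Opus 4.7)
The ``if'' direction is immediate: the mirror operation $D \mapsto \overline{D}$ is a bijection between diagrams of $K$ and diagrams of $\overline{K}$ on $S^{2}$ which carries each Reidemeister move to another Reidemeister move of the same type, hence induces an isomorphism $\mathcal{G}_{S}(K)\equiv \mathcal{G}_{S}(\overline{K})$.

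For the converse, let $\phi\colon \mathcal{G}_{S}(K)\to \mathcal{G}_{S}(K')$ be an abstract graph isomorphism. The plan is to recover enough structure from $\phi$ to reconstruct, from the image of a single well-chosen vertex, a diagram of $K'$ that agrees with a diagram of $K$ up to mirror. The first step uses Theorem \ref{distinguemosse}: since $\phi$ must send $\Ru$-edges to $\Ru$-edges, $\Rd$-edges to $\Rd$-edges and $\Rt$-edges to $\Rt$-edges, and since these edges change the crossing number by $\pm 1$, $\pm 2$ and $0$ respectively, the diagram complexity from Section \ref{local} is preserved by $\phi$. In particular $\phi$ sends crossing-minimum vertices to crossing-minimum vertices.

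The second step is a local reconstruction. Pick a crossing-minimum vertex $v$ corresponding to some diagram $D$ of $K$ and read off the combinatorics of $D$ from the local graph-structure at $v$: the decreasing $\Ru$-edges at $v$ detect nugatory crossings; the increasing $\Ru$-edges are indexed by pairs (arc, side); the $\Rd$-edges are indexed by pairs of co-incident arcs, respectively bigons; and the $\Rt$-edges are indexed by triangular faces carrying the over/under information of the three participating strands. The commutation $4$-cycles arising from disjoint moves, controlled by the short-path classification of Theorem \ref{prop:lung3}, recover the incidence pattern between arcs, crossings and faces of $D$, and hence the underlying $4$-valent planar graph. The crossing information is then pinned down by the local $\Rt$-pattern, but only up to a simultaneous swap of all crossings, which is precisely the mirror ambiguity. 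Applying the same recipe to $\phi(v)\in \mathcal{G}_{S}(K')$ produces a diagram of $K'$ which is either $D$ or $\overline{D}$, forcing $K' = K$ or $\overline{K}$.

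The principal obstacle is this reconstruction step: one has to certify that the graph-theoretic data at (and around) a single vertex is rich enough to determine the diagram, and that no spurious pseudo-diagram of $K'$ can reproduce the same local picture as $D$. To overcome it, I would exploit the valence change between adjacent vertices from Section \ref{local} together with Arnold's and Hass--Nowik's diagram invariants (Section \ref{sec:grafo}): these invariants change in a controlled way along edges of known type, so, once a crossing-minimum vertex is fixed, they can be propagated along paths to ensure the local combinatorial picture extracted from $\phi$ is consistent and essentially unique, thereby isolating the global mirror as the only remaining ambiguity.
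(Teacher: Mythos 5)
Your high-level plan (recover the projection from the graph, then pin down the crossings up to a global mirror) is the same as the paper's, but the crucial reconstruction step is asserted rather than proved, and the mechanisms you propose are not actually available from an abstract graph isomorphism. The ball $S(D)$ gives you only unlabelled edges and the valences of neighbouring vertices; as in Proposition \ref{prop:determinaP} this determines the multiset $P(D)$ of face sizes (when $p_1(D)=0$ and the neighbourhood is non-periodic), but it does not tell you which edge corresponds to which arc or region, nor how the regions are glued together. Your plan to extract the incidence structure from ``commutation $4$-cycles'' runs into exactly the difficulty the paper sidesteps: $4$-cycles are ubiquitous (any pair of distant moves produces one) and are never classified, so distinguishing disjoint from overlapping pairs of moves graph-theoretically is precisely the hard part, not a tool you may assume. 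The paper instead works far outside $S(D)$: it manufactures \emph{sparse} diagrams by stacking towers of curls of wildly different sizes ($N_1\gg N_2\gg\cdots$) so that face sizes become pairwise recognisable, reads off the dual graph, and then recovers the \emph{rotation system} (the actual embedding of the dual, not just the abstract incidence graph) by tracking chosen sequences of $\Omega_2^+$ moves around each region (Proposition \ref{riconosceproj}). Your proposal addresses neither the embedding issue nor periodicity, and the valence formulas you invoke only hold for non-periodic diagrams; the paper needs Lemma \ref{lemma:miniprop} and the periodic-case analysis in Theorem \ref{distinguemosse} to get around this. Also, the vertex you should start from is one with $p_1(D)=0$ (no $\Omega_1^-$ edges), not a crossing-minimum vertex, and minimal valence is not minimal crossing number (Remark \ref{rmk:diagrammanonmin}); decreasing $\Ru$-edges detect $1$-gons, not nugatory crossings in general.

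The second genuine gap is the crossing data. The ``local $\Rt$-pattern'' at a single vertex cannot pin down the crossings up to simultaneous reversal: a diagram with no triangular faces has no $\Rt$-edges at all, and even when such edges exist the abstract graph does not label which face they sit at. The paper needs a separate global argument (Proposition \ref{distinguediagramma}): if two diagrams share the same projection but differ in some, though not all, crossings, it constructs an explicit $\Omega_2^+$-$\Omega_3$-$\Omega_2^-$ path that exists in one graph but not in the other, using the projection-recognition result along the whole path. Finally, your fallback of ``propagating Arnold's and Hass--Nowik's invariants along edges of known type'' is not something a graph isomorphism provides: these are invariants of diagrams, their values are not determined by the abstract graph, and in the paper they are used only to classify triangles and short configurations (Theorem \ref{prop:lung3}), never as data transported by $\phi$. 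So as written the proof has a missing core: both the reconstruction of the embedded projection and the elimination of partially-mirrored diagrams require arguments that your proposal does not supply.
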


Indeed the proof of this theorem will guarantee a stronger result (Proposition \ref{distinguediagramma}): the isomorphism type of the graph does not only distinguish all knots, but contains enough information to recover some diagrams of the knot (up to mirroring). Moreover, all this data can be extracted from finite portions of the graph (Corollary \ref{cor:pallachar}).\\
We remark that, unlike the previously known complete invariants -such as knot complement \cite{gordon1989}, quandles \cite{joyce1982classifying} and conormal tori \cite{ekholm2016complete}- the proof of completeness for the $S^2$-graph is substantially more elementary, and self contained.

Finally, in Section \ref{blownup} we define yet another kind of graph, relating the Gordian and Reidemeister graphs by a ``blowup'' construction.\\

\textbf{Acknowledgements:} The authors would like to thank C. Collari, P. Lisca, A. Juh\'{a}sz, P. Ghiggini, M. Golla, F. Lin, and D. Eridan for stimulating conversations. The authors are extremely thankful to M. Lackenby for pointing out some mistakes in an early draft and for his continuous help. A special thanks to  M. Marengon, for his remarks, suggestions and valuable help. The first author would like to thank her other supervisors Dorothy Buck and Heather Harrington for their support.
The first author is supported by the EPSRC grant ``Algebraic and topological approaches for genomic data in molecular biology'' EP/R005125/1.

The second author has received support from the European Research Council (ERC) under the European Union's Horizon 2020 research and innovation program (grant agreement No 674978).

The authors would also like to thank the Isaac Newton Institute for Mathematical Sciences, Cambridge, for support and hospitality during the program ``Homology theories in low dimensional topology'', supported by EPSRC grant no EP/K032208/1, where work on this paper was undertaken.

\section{The graph}\label{sec:grafo}

We start by giving some precise definitions of the well known objects we are going to use extensively in the following.

As usual by \emph{knot} we mean the ambient isotopy class of a tame embedding of $S^1$ in $S^3$. The set of unoriented knot types in $S^3$ will be denoted by $\mathcal{K}$, and the set of diagrams representing a knot $K$ by $\diag$. A knot diagram $D \in \diag$ can be thought of as a 4-valent graph in $\R^2$ or $S^2$, by disregarding the crossing information. In order to avoid confusion, we are going to refer the $4$-valent graph associated to a diagram as the \emph{knot projection}. We will call an \emph{arc} each portion of a diagram or projection which connects two crossing points, and denote by $\alpha(D)$ the number of arcs in $D$. By the handshaking lemma we have $\alpha(D) = 2cr(D)$, where $cr(D)$ is the number of crossings in $D$. From now on, we are going to assume that, unless otherwise stated, each diagram $D$ contains at least one crossing.

The complement of a planar knot projection is composed by polygons, with the exception of the ``external'' region which is a punctured polygon; we will call this external part a polygon as well. As is customary we denote by $p_k (D)$ the number of polygons with $k$-faces.

We have
\begin{equation}
\alpha(D) = \frac{1}{2} \sum_{k \ge 1} k\cdot p_k(D),
\end{equation}
and the number of regions in $S^2 \setminus D$ is $\sum_{k \ge 1} p_k(D)$.

We say that a planar diagram $D$ is \emph{periodic} if there exists a non-trivial rotation of the projection plane taking $D$ to itself, and a knot $K$ is periodic if it admits a periodic diagram.
The \emph{order of periodicity} is then just the order of the rotation acting on the diagram.

A diagram on the $2$-sphere is said to be \emph{periodic} if there is a non-trivial finite order, orientation preserving self-diffeomorphism of the sphere, that takes it to itself.
The $(2n+1,2)$-torus knots are an example of knots that exhibit a $D_{2n+1}$ periodicity\footnote{Here $D_m$ denotes the dihedral group of order $2m$.} on the $2$-sphere and cyclic periodicity of order $2$ or $d$ (with $d|2(2n+1)$) on the plane.

Conversely, a knot which does not admit any periodic diagram is said to be \emph{non-periodic}.

A \emph{planar isotopy} can modify locally a knot diagram by moving slightly an arc as in Figure \ref{fig:localisot}, or by displacing a whole diagram, without creating or removing any crossing.

\begin{figure}[h]
\includegraphics[width=8cm]{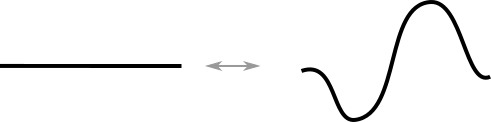}
\caption{Example of a local planar isotopy acting on an arc.}
\label{fig:localisot}
\end{figure}

Note that by considering $\R^2$ rather than $S^2$ as the ambient space we get a ``larger" set of diagrams; the two diagrams of the left trefoil minimising its crossing number  shown in Figure \ref{fig:trifogli} are planar isotopic on the 2-sphere but not on the plane.\\

\begin{figure}[h]
\includegraphics[width=6.7cm]{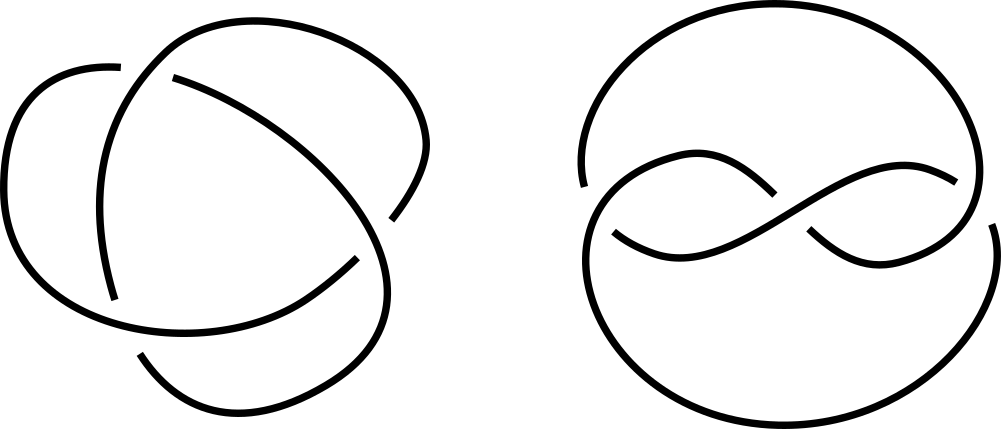}
\caption{These two diagrams of the left trefoil are planar isotopic on $S^2$, but not on $\R^2$.}
\label{fig:trifogli}
\end{figure}

Probably the most fundamental result in knot theory is Reidemeister's Theorem\footnote{We will adhere to the standard attribution of the theorem, which in fact was independently discovered by Alexander and Briggs \cite{alexbriggs}.} \cite{reidemeister}, stating that two diagrams represent the same knot type \emph{iff} they are related by a finite sequence of local moves, known as Reidemeister moves, together with planar isotopy. These moves are described in Figure \ref{reidemoves} below.
\begin{figure}[h]
\includegraphics[width=11cm]{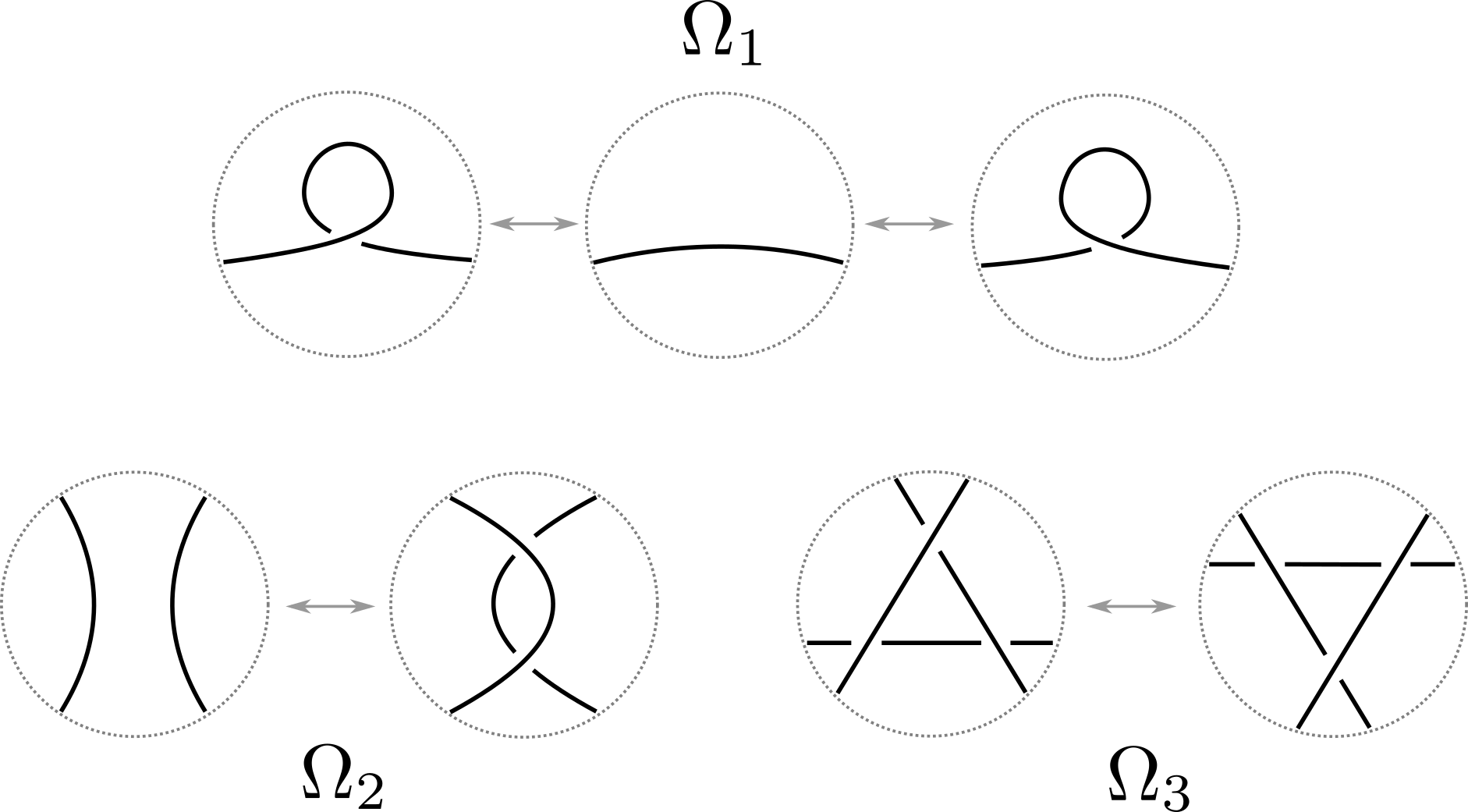}
\caption{The standard Reidemeister moves on knot diagrams.}
\label{reidemoves}
\end{figure}
Note that this set of moves is not \emph{minimal} (cf. \cite{polyak2010minimal} for the statement in the oriented case); in fact one of the two $\Omega_1$ moves could be discarded (Figure \ref{fig:triangoloreid}). However the choice of this slightly larger set will be crucial in the proof of all the upcoming results.
In what follows we will find it convenient to divide the $\Omega_2$ moves in two kinds; the first ones consist of those $\Omega_2$ moves performed on the configuration in Figure \ref{fig:tentacle}, which is called a \emph{tentacle}. We will denote them by  $\Omega_T$, and call them \emph{tentacle moves}.
\begin{figure}[h]
\includegraphics[width=6cm]{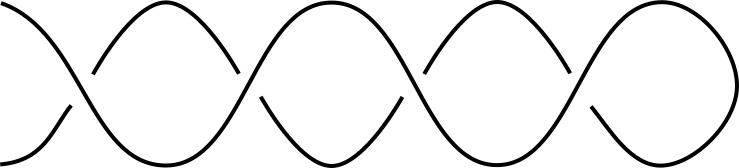}
\caption{A tentacle configuration of height $3$. Note that the crossings have alternating signs.}
\label{fig:tentacle}
\vspace{0.5cm}
\includegraphics[width=10cm]{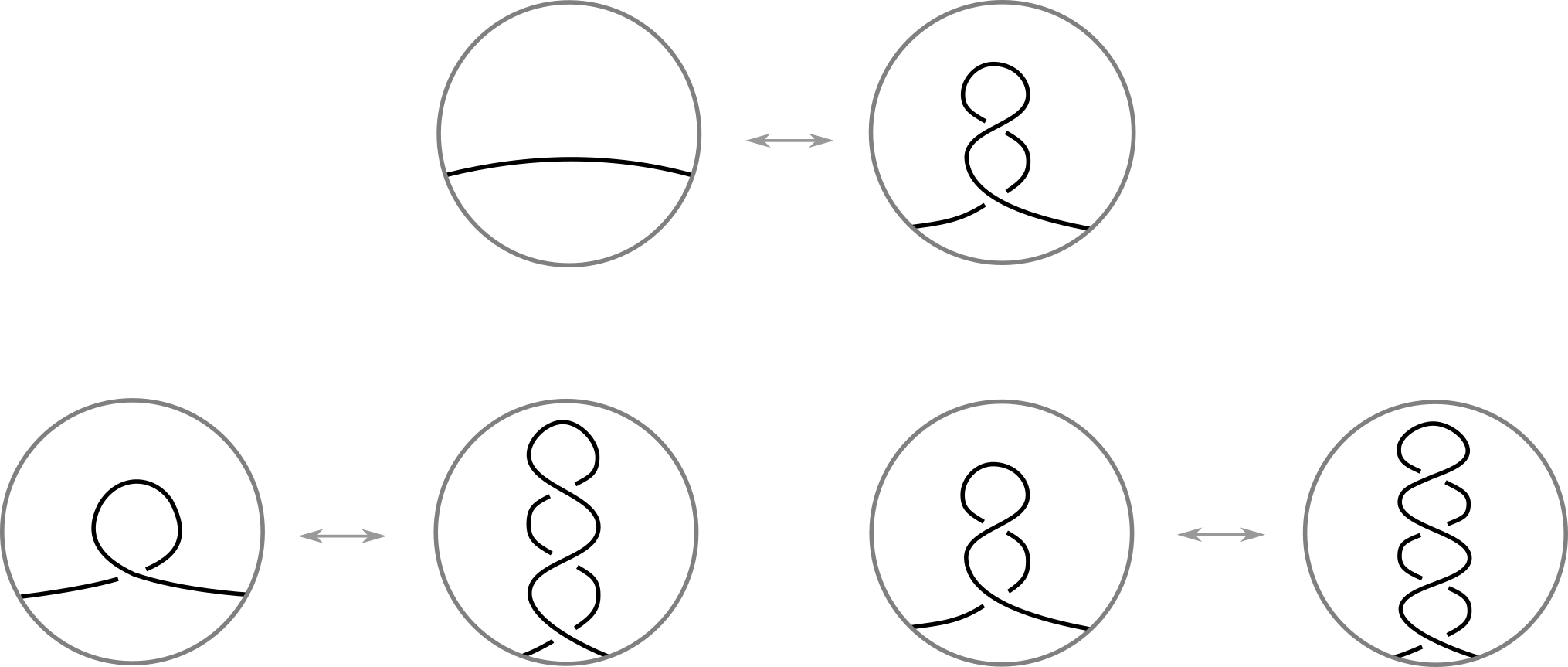}
\caption{The tentacle moves. In the top part of the figure a tentacle move creating a tentacle configuration of height $1$ is shown; such a move always arises as the superposition of an arc on itself.}
\label{fig:tentmoves}
\end{figure}
In other words, $\Rs$ moves are the $\Omega_2$ moves that create a tentacle (and their inverses), as in Figure \ref{fig:tentmoves}. We will say that a tentacle configuration has \emph{height $m$} if it can be expressed as the composition of $m+1$ $\Omega_1$ moves (with alternating signs). In particular a tentacle of height $1$ is the result of performing two $\Ru$s with opposite curls, or a $\Rs$ move in which the two affected strands belong to the same arc (top of Figure \ref{fig:tentmoves}). A tentacle of height $m$ contains $m-1$ sub-tentacles of heights $m-1, \ldots, 1$ as sub-configurations.

The other kind (which we will simply call $\Omega_2$) instead is any other Reidemeister move of type $2$.
The reason for this distinction will become apparent in the next sections (see Theorem \ref{prop:lung3}); in fact we are going to prove that tentacle moves are intrinsically distinguished from the other moves (Theorem \ref{distinguemosse}).

Additionally, if $\Omega$ denotes a Reidemeister move which is not a $\Omega_3$, there are two cases, according to whether we are doing or undoing the move. Hence, when necessary, we are going to denote a move by $\Omega^+$ or $\Omega^-$ if it increases (respectively decreases) the crossing number.

Recall that the \emph{writhe} of a diagram $D$ is the sum of the signs of the crossings; it changes by $\pm 1$ when a $\Omega_1$ move is performed, and is left unchanged by the other moves. The last definition we will need is the \emph{mirror} $\overline{D}$ of a diagram $D$, which is just the diagram obtained by switching all crossings in $D$. A knot is said to be \emph{amphichiral} if it is unchanged under mirroring of its diagrams.

We are ready to introduce the two basic versions of the object we are going to study throughout the rest of the paper.

\begin{defi}
Given a knot $K\subset S^3$ define the \emph{Reidemeister graph} of $K$, $\reid$ as the graph whose vertices are the planar diagrams of $K$ up to planar isotopy, and has an edge between two diagrams \emph{iff} they are connected by a single Reidemeister move. If we replace planar diagrams of $K$ with diagrams on the $2$-sphere (up to isotopies of $S^2$) we obtain the $S^2$-Reidemeister graph $\mathcal{G}_{S}(K)$.
\end{defi}

In what follows we will use the term ``Reidemeister graphs'' or $\mathcal{R}$-graphs to denote both $\reid$ and $\sfera$.
\begin{rmk}
It might as well happen that two diagrams are connected by two different moves (see \emph{e.g.}$\;$ Figure \ref{fig:multiedge1}), which will be considered as different edges in the graphs. On the other hand, moves coinciding up to a planar isotopy will be represented by a single edge.\\
There are a few immediate consequences of this definition; first of all the isomorphism class of the graphs $\reid$ and $\sfera$  are knot invariants, and they are unchanged under mirroring of the knot.

Also, Reidemeister's Theorem implies that, for each $K \in \mathcal{K}$ the corresponding $\mathcal{R}$-graphs are connected.
\end{rmk}

It might seem strange to define invariants that are more complicated than the object we started with. However, beyond their intrinsic interest, the $\mathcal{R}$-graphs will allow us to produce several related simple numerical invariants.

To prove many of the local structure results of Section \ref{local} for the graphs $\reid$ and $\sfera$, we will need the diagram invariant introduced by Hass and Nowik in \cite{hass2008invariants}, whose properties are concisely recalled below. In the specialized form we are going to use it, this invariant takes values in the free abelian group generated by the formal variables $\{X_s, Y_s \}_{s \in \Z}$. We call an $\Omega_1^+$ positive if the crossing (for an arbitrary choice of orientation) is positive, and negative otherwise; we will call a $\Omega_2$ move \emph{matched} if the two strands go in the same direction, and \emph{unmatched} otherwise (see Figure \ref{fig:differentir2}). Note that $\Omega_T$ moves are always unmatched.

\begin{figure}[h!]
\includegraphics[width=5cm]{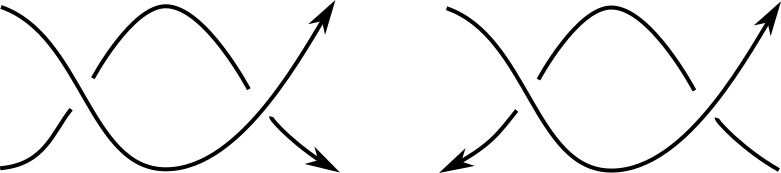}
\caption{A matched and unmatched $\Omega_2$ move (left and right respectively).}
\label{fig:differentir2}
\end{figure}
If two diagrams $D$ and $D^\prime$ differ by a single Reidemeister move, the corresponding $I_{lk}$-invariants differ as shown below, for some $n,m,r \in \Z$.
\begin{itemize}
\item $I_{lk} (D^\prime) = I_{lk} (D) + X_0$  if the move is a positive $\Omega_1^+$.
\item $I_{lk} (D^\prime) = I_{lk} (D) + Y_0$  if the move is a negative $\Omega_1^+$.
\item $I_{lk} (D^\prime) = I_{lk} (D) + X_n + Y_{n+1}$  if the move is a matched $\Omega_2^+$.
\item $I_{lk} (D^\prime) = I_{lk} (D) + X_m + Y_m$  if the move is a unmatched $\Omega_2^+$.
\item $I_{lk} (D^\prime) = I_{lk} (D) + X_0 + Y_0$  if the move is a $\Omega_T^+$.
\item $I_{lk} (D^\prime) = I_{lk} (D) +  \begin{cases}  \pm (X_{r}- X_{r+1}) \\  \pm (Y_{n}- Y_{n+1}) \end{cases}$ if the move is a $\Omega_3$.
\end{itemize}

The only other non-trivial invariants we are going to use are Arnold's \emph{perestroika} invariants $St, J^{\pm}$, first defined in \cite{arnold}. These are invariants of regular homotopy classes of immersions of $S^1$ in $\R^2$ or $S^2$. They change in a controlled way under perestroikas, that is, the analogue of Reidemeister moves for immersions $S^1 \looparrowright \R^2 $ (or $S^2$), as shown in Figure \ref{fig:perestroikas}. We will use them on the knot projections associated to the diagrams.
Note that there is no analogue of $\Omega_1$ moves for immersions, since performing it would change the \emph{index} of the curve.

\begin{figure}[h!]
\includegraphics[width=5cm]{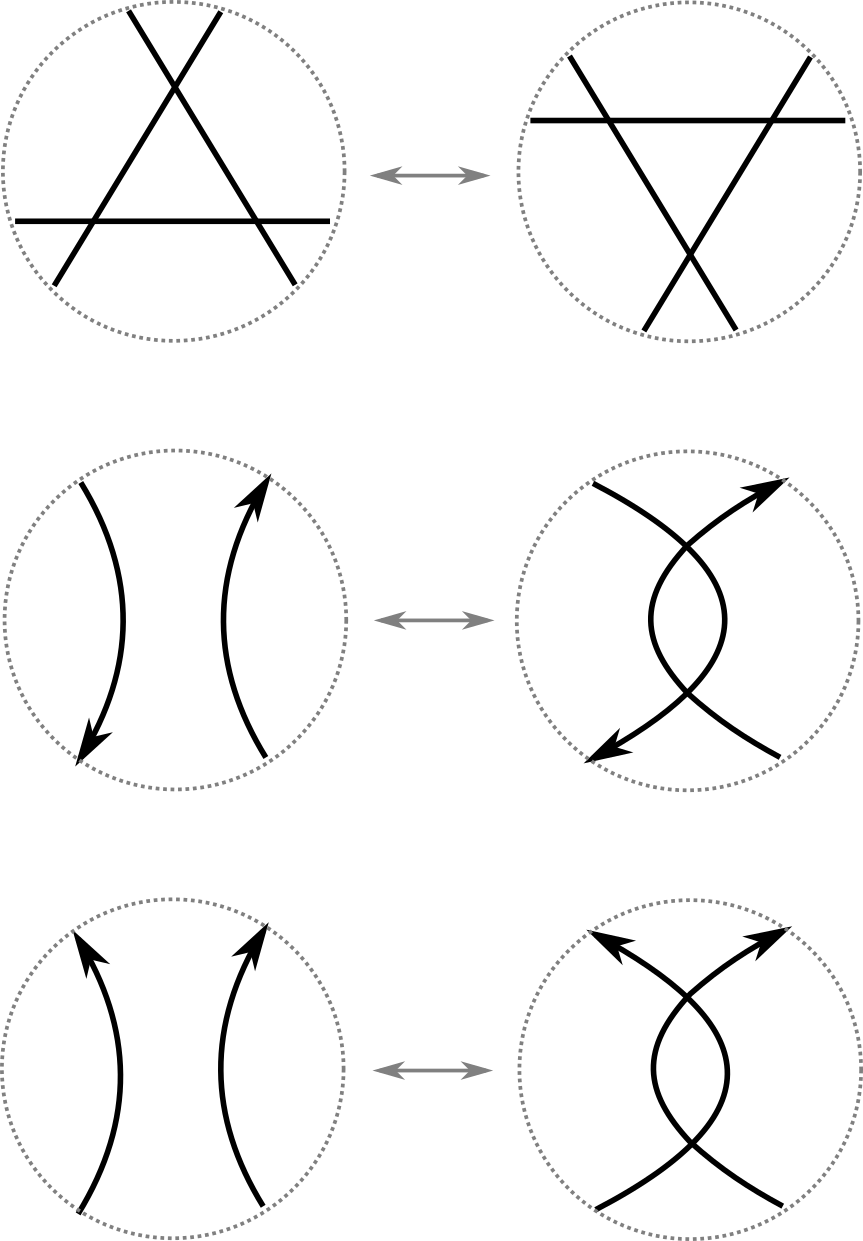}
\caption{A triple point perestroika, followed by the two possible self-tangency perestroikas.}
\label{fig:perestroikas}
\end{figure}

The invariant $St$ changes by $\pm 1$ under a triple point perestroika (which corresponds to a $\Omega_3$ move in our setting), and is left unchanged under selftangency perestroikas (corresponding to $\Omega_2$s and $\Omega_T$s). On the other hand, the invariant $J^{+}$ is unchanged under triple point perestroikas and changes by a fixed positive amount (conventionally 2), when a direct tangency perestroika is performed (that is a matched $\Omega_2^+$). The invariant $J^-$ behaves in similarly, but changes only for inverse selftangency perestroikas (that is an unmatched $\Omega_2^+$ or $\Omega_T^+$ in our case).

\section{Local properties}\label{local}

Given a knot $K \in \nodi$ and $D \in \diag$, the Reidemeister graphs can be naturally endowed with the path metric. Note that the distance induced by this metric coincides with the minimal number of Reidemeister moves connecting two diagrams. We denote
by $S(D)$ the subgraph induced by the vertices having distance $\le 1$ from  $D$. As we will see in what follows, a lot of information about a diagram $D$ can be extracted from $S(D)$.\\
The next results are aimed at understanding in detail the structure of small portions of the Reidemeister graph, in both the periodic and non-periodic cases.

We will find it convenient to denote by $\#\Omega_i^{\pm} (D)$ the number of  Reidemeister $i$ moves of type $\pm$ which can be applied to $D$.\\

This next result states that there are no ``cosmetic Reidemeister moves'', meaning that a Reidemeister move necessarily changes the diagram, even up to planar isotopy.
\begin{prop}\label{noselfedges}
The graphs $\reid$ and $\sfera$ do not contain any self-edges.
\end{prop}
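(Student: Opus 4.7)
The plan is to split the argument according to the type of Reidemeister move, since the four move types behave very differently with respect to the obvious invariants.

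First I would handle the moves that change the crossing number. If $D'$ is obtained from $D$ by an $\Omega_1^\pm$, $\Omega_2^\pm$ or $\Omega_T^\pm$, then $cr(D') = cr(D) \pm 1$ or $cr(D) \pm 2$. Since $cr$ is clearly invariant under planar isotopy (and under $S^2$-isotopy), it follows immediately that $D$ and $D'$ are not equivalent as vertices of $\reid$ or $\sfera$. So none of these moves can produce a self-edge.

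The only remaining case is $\Omega_3$, which preserves the number of crossings. Here I would invoke the Hass--Nowik invariant $I_{lk}$ recalled in Section \ref{sec:grafo}: if $D'$ is obtained from $D$ by a single $\Omega_3$, then
\[
I_{lk}(D') - I_{lk}(D) \in \bigl\{\pm(X_r - X_{r+1}),\ \pm(Y_n - Y_{n+1})\bigr\}
\]
for some $r,n \in \Z$. Each of these elements is non-zero in the free abelian group generated by $\{X_s, Y_s\}_{s \in \Z}$, since $X_r$ and $X_{r+1}$ (resp. $Y_n$ and $Y_{n+1}$) are distinct basis vectors. Because $I_{lk}$ is a genuine invariant of diagrams up to planar (and $S^2$) isotopy, we conclude $D \ne D'$ in both $\reid$ and $\sfera$. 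Alternatively one could use Arnold's $St$-invariant, which also changes by $\pm 1$ under a triple point perestroika.

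The argument is uniform in the two settings, so both graphs are simultaneously shown to have no self-edges. There is no real obstacle here: the only subtlety is to make sure one is reading the relevant invariants as functions on isotopy classes of diagrams (rather than on specific diagrams in the plane), which is the standard convention and is implicit in the formulation of $I_{lk}$ and $St$.
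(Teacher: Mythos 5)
Your proof is correct and follows essentially the same route as the paper: rule out $\Omega_1$, $\Omega_2$ and $\Omega_T$ moves via the crossing number, then exclude $\Omega_3$ using the Hass--Nowik invariant $I_{lk}$ (or Arnold's $St$), whose change under a triple point move is a non-zero element of the free abelian group. No issues.
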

\begin{proof}
Since $\Omega_1$ and $\Omega_2$ moves change the crossing number, they can be immediately ruled out. The only possibility is then to have a $\Omega_3$ move that, if performed, takes a diagram $D \in \diag$ to itself (up to planar isotopy). It is however easy to exclude this case as well using the Hass-Nowik $I_{lk}$ invariant (or Arnold's $St$): as recalled in the previous section this invariant changes in a non-trivial manner under $\Omega_3$ moves.
\end{proof}

It is easy to realize that for a given knot, its $\mathcal{R}$-graph contains infinitely many multi-edges, of any order: just take $n$ identical curls on the same arc for one diagram and $n+1$ on the other. Then there are $n+1$ edges connecting them, corresponding to the possible choices for adding another curl, as shown in the top part of Figure \ref{fig:multiedge1} for $n = 2$.\\
It is also possible to find multi-edges induced by $\Omega_2$ moves, as shown in the middle and lower parts of Figure \ref{fig:multiedge1}.
\begin{figure}[h!]
\includegraphics[width=8cm]{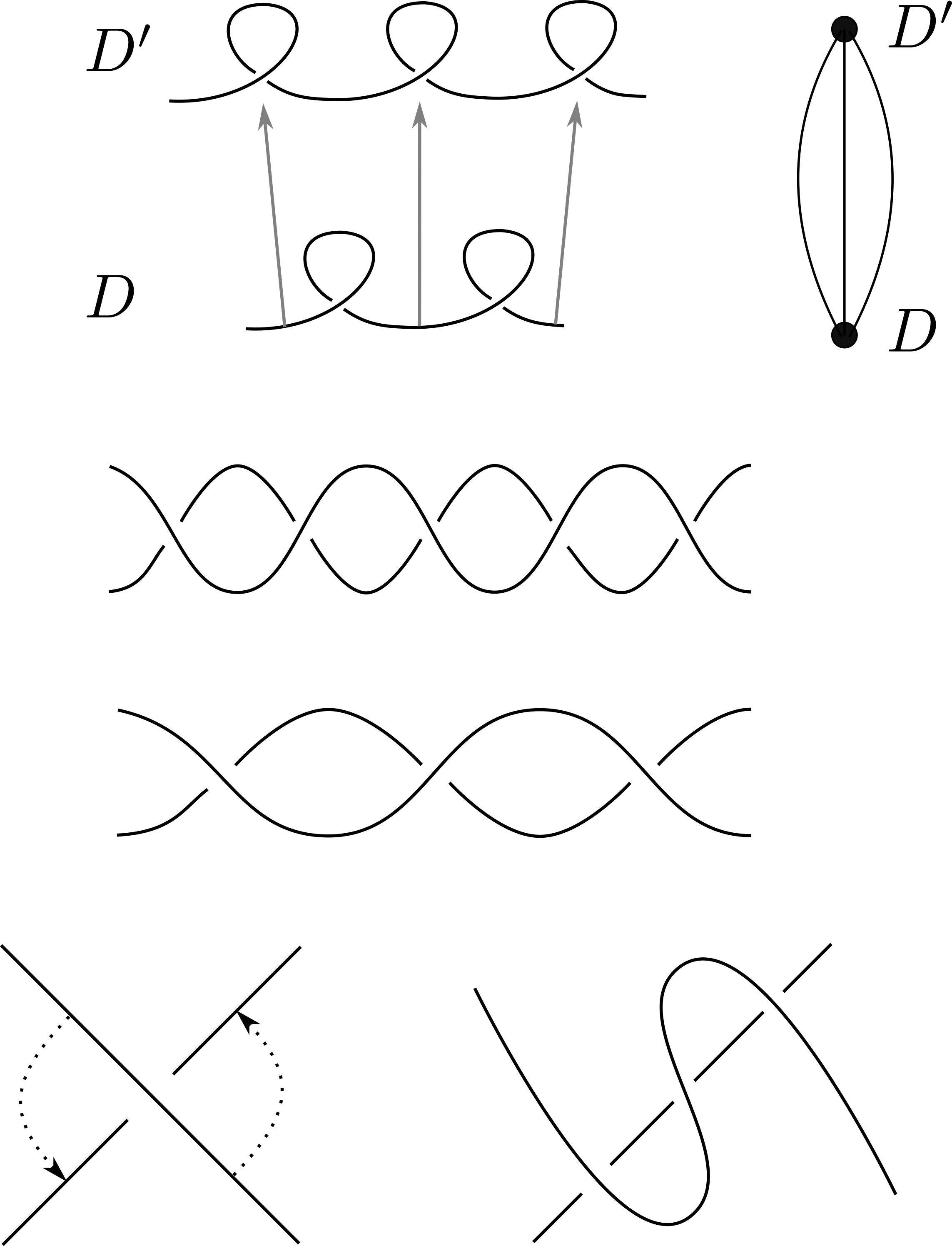}
\caption{In the top part some curls on an arc inducing a multi-edge on the graph, together with the corresponding configuration. In the central part, a multi-edge induced by $\Omega_2$ (or $\Omega_T$) moves, and in the lower part a $2$ multi-edge induced by $\Omega_2$s.}
\label{fig:multiedge1}
\end{figure}

In fact, using the configuration in the lower part of Figure \ref{fig:multiedge1}, it is immediate to show that the only radius $1$ ball not containing multi-edges is centered in the crossingless diagram of the unknot.

If the knot is periodic, one can also have multi-edges of the form shown in Figure \ref{fig:multiedge}.
\begin{figure}[h!]
\includegraphics[width=7cm]{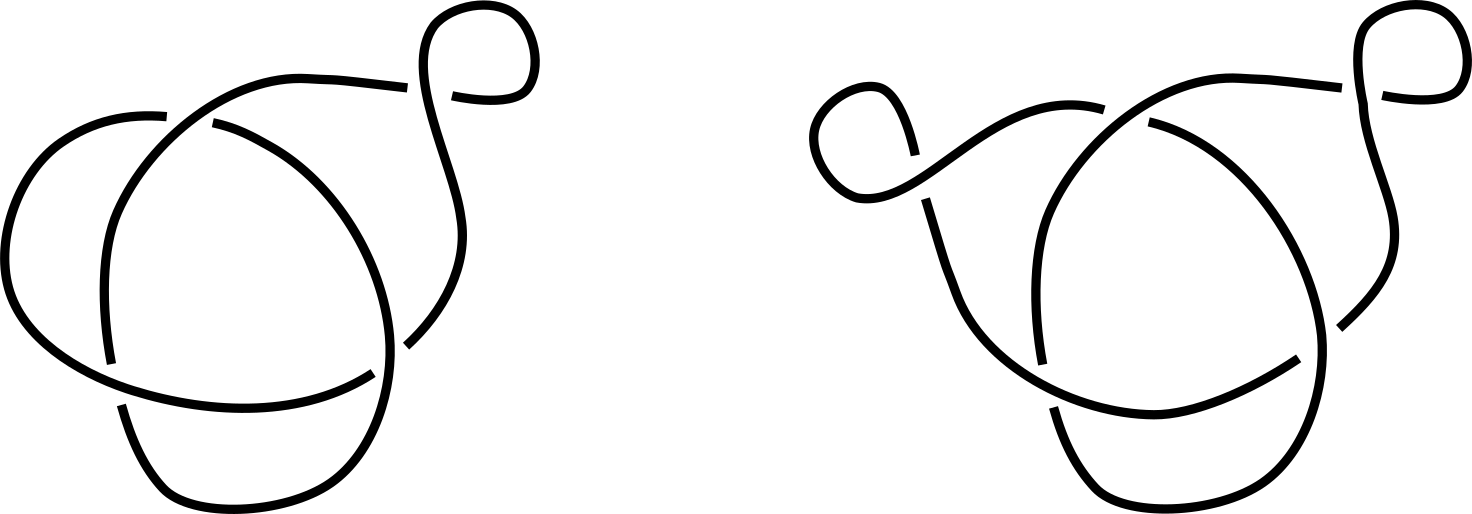}
\caption{There are two inequivalent $\Omega_1$ moves that take one diagram to the other. Note that, even if the diagrams are not periodic, they represent a periodic knot type.}
\label{fig:multiedge}
\end{figure}
It is however easy to prove\footnote{\emph{e.g.} using Arnold's invariants for $\Rd, \Omega_3$s, and $I_{lk}$ for $\Ru$s.} that each multi-edge must be composed of moves of the same kind.

We will say that a graph contains a \emph{triangle} if there are $3$ distinct vertices, such that each vertex is at distance $1$ from the other two.

We want to analyse the shape of the cycles in $S(D)$, other than the multi-edges. It is easy to find a cycle of length $3$, shown in Figure \ref{fig:triangoloreid}. Moreover, since this cycle can start from any unknotted portion of an arc, it is ubiquitous in all Reidemeister graphs.
\begin{figure}[h!]
\includegraphics[width=6cm]{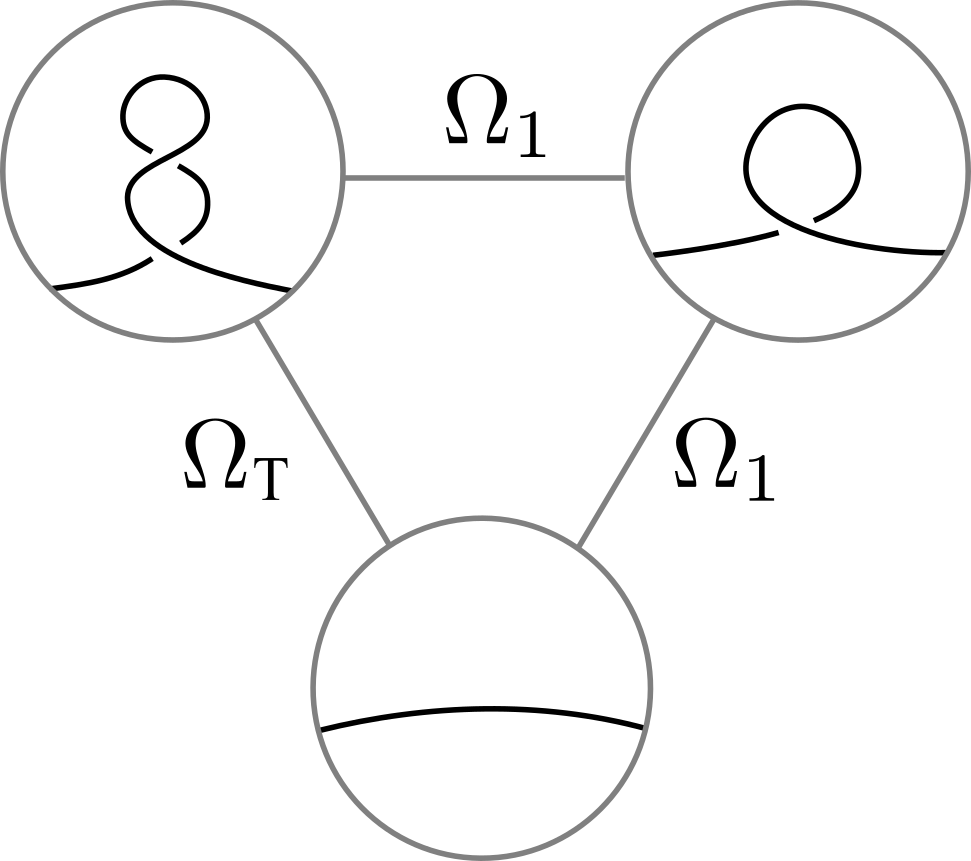}
\caption{A triangle composed by $\Omega_T^{\pm}$-$\Omega_1^{\mp}$-$\Omega_1^{\mp}$.}
\label{fig:triangoloreid}
\end{figure}
A similar and slightly more elaborate example involving a ``higher tentacle'' is shown in Figure \ref{fig:tentacletriangle}. 
The following result will establish that in some sense these are the only possible cases. Moreover it will permit us to explore the main properties of the graph. Its proof is roughly based on the following idea: the total sum of any diagram invariant has to vanish on a closed cycle. In most cases it will be sufficient to consider very simple diagram invariants, such as the crossing number.

\begin{figure}[h!]
\includegraphics[width=5cm]{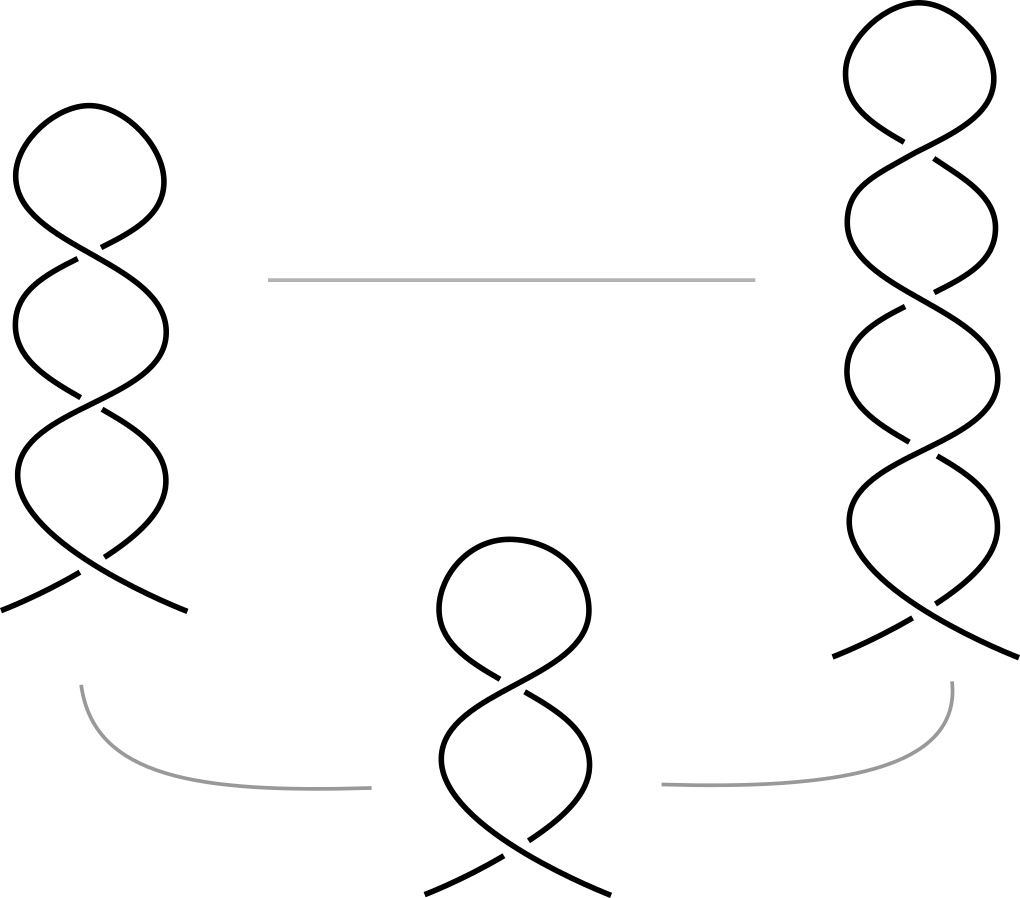}
\caption{A triangle involving some tentacle configurations.}
\label{fig:tentacletriangle}
\end{figure}

\begin{thm}\label{prop:lung3}
If $K$ is a non-trivial knot, the only triangles in its Reidemeister graphs are of the form $\Rs^\pm$-$\Ru^\mp$-$\Ru^\mp$. If instead $K$ is the unknot $\bigcirc$, there are some sporadic exceptions, shown in Figure \ref{fig:controesempio}, of cycles of the form $\Rd^\pm$-$\Ru^\mp$-$\Ru^\mp$.
\end{thm}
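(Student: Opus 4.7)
The plan is to classify triangles by the total crossing-number change around them, which must equal zero. Each edge contributes $\pm 1$ (for $\Ru$), $\pm 2$ (for $\Rd$ or $\Rs$), or $0$ (for $\Rt$), so the only possibilities are: (i) three $\Rt$'s; (ii) one $\Rt$ together with an $\Ru^+$ and an $\Ru^-$; (iii) one $\Rt$ together with one $(\Rd\text{ or }\Rs)^+$ and one $(\Rd\text{ or }\Rs)^-$; (iv) two $\Ru^+$'s and one $(\Rd\text{ or }\Rs)^-$; and (v) the orientation-reverse of (iv).

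Cases (i) and (iii) are ruled out by Arnold's strangeness $St$. The moves appearing in them all preserve the regular-homotopy class of the underlying immersion, so $St$ is well-defined on the three vertices of such a triangle; as only $\Rt$ affects $St$ (by $\pm 1$), the sum of the changes around the cycle is a non-zero odd integer. Case (ii) is ruled out with the Hass-Nowik invariant $I_{lk}$: the $\Rt$ contribution always involves a term of the form $\pm(X_r - X_{r+1})$ or $\pm(Y_n - Y_{n+1})$, while the two $\Ru^\pm$ contributions live entirely in index $0$, so cancellation is impossible. Feeding $I_{lk}$ into case (iv) (and symmetrically (v)) forces the two $\Ru^+$'s to have opposite Hass-Nowik signs (one yielding $X_0$, the other $Y_0$), and narrows the third move down to either a tentacle $\Rs^-$ (the desired configuration) or an unmatched $\Rd^-$ with parameter $m=0$.

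The main obstacle will be excluding this last possibility when $K$ is non-trivial, which I propose to address by a direct combinatorial analysis of the $4$-valent projection graph of $D_3$. Let $c_1,c_2$ be the two crossings produced by the two $\Ru^+$ moves. In the non-tentacle sub-case both monogons created by the $\Ru^+$'s persist in $D_3$, so each of $c_1,c_2$ has an adjacent monogon; and $c_1,c_2$ must additionally bound a bigon for the closing $\Rd^-$ to act. A monogon at a crossing uses two adjacent slots of the $4$-valent projection, and the two bigon edges use the remaining two slots at each of $c_1,c_2$, so no edge can leave $\{c_1,c_2\}$. Therefore this pair is an isolated sub-configuration of the projection graph of $D_3$ and must comprise the whole of $D_3$. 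A direct trace of the resulting two-vertex, four-edge projection shows it has a single component, so $D_3$ represents the unknot. For non-trivial $K$ this is a contradiction, forcing the closing move to be a tentacle $\Rs^-$; for $K=\bigcirc$, the same analysis simultaneously produces the sporadic $\Rd^\pm$-$\Ru^\mp$-$\Ru^\mp$ triangles displayed in Figure \ref{fig:controesempio}.
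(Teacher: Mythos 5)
Your reduction via crossing number, $St$ and $I_{lk}$ matches the paper's opening steps and is fine: it correctly eliminates the all-$\Rt$ cycles, the $\Rt$-$\Rd$-$\Rd$ type cycles and the $\Rt$-$\Ru$-$\Ru$ case, and correctly forces the two $\Ru$ moves to create crossings of opposite sign and the type-2 move to be unmatched. The genuine gap is in the final step: you implicitly assume that the closing $\Rd^-$ edge of the triangle acts on the two crossings $c_1,c_2$ produced by the $\Ru^+$ edges (``$c_1,c_2$ must additionally bound a bigon for the closing $\Rd^-$ to act''). Nothing forces this. A triangle only asserts that the diagram obtained by undoing some bigon somewhere is \emph{isotopic} (on $S^2$ or in the plane) to the diagram obtained by undoing two curls, which may sit in a completely different part of the diagram; there is no canonical identification of crossings between the two descriptions. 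This non-local situation is exactly where all the difficulty lies, and it is where the paper spends most of its proof: it first uses the self-intersection tuple $SI(D)$ to force the two outer regions of the $\Rd^-$ to coincide, and then runs the case analysis of Figure \ref{fig:cases} on the mutual positions of the curls, with recursive counting of tentacle and ``heart'' configurations and blackboard-framing arguments to rule out coincidences coming from far-away identical configurations or periodicity. Your local slot-counting argument only covers the case where all three moves are supported on a common small portion of the diagram, so it cannot conclude that the type-2 move is a tentacle move in general.

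A symptom of the same problem is your last sentence: the sporadic unknot triangles of Figure \ref{fig:controesempio} are precisely non-local ones, in which the $\Rd^-$ removes a bigon attached to a long chain of curls while the two $\Ru^-$ moves undo curls elsewhere, and the two results happen to be isotopic on $S^2$; they are not the isolated two-crossing configuration your analysis produces (which is just a normal $\Rs$-triangle candidate). Also, even within your localized setting, the claim that ``both monogons created by the $\Ru^+$'s persist'' needs care, since the second curl can be made on the arc of the first and destroy its monogon without forming a tentacle. To repair the proof you would need an argument comparing the two high-crossing (or two low-crossing) diagrams as a whole -- some form of the paper's configuration-counting/recursive argument -- rather than a purely local analysis around $c_1,c_2$.
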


\begin{figure}[h!]
\includegraphics[width=7cm]{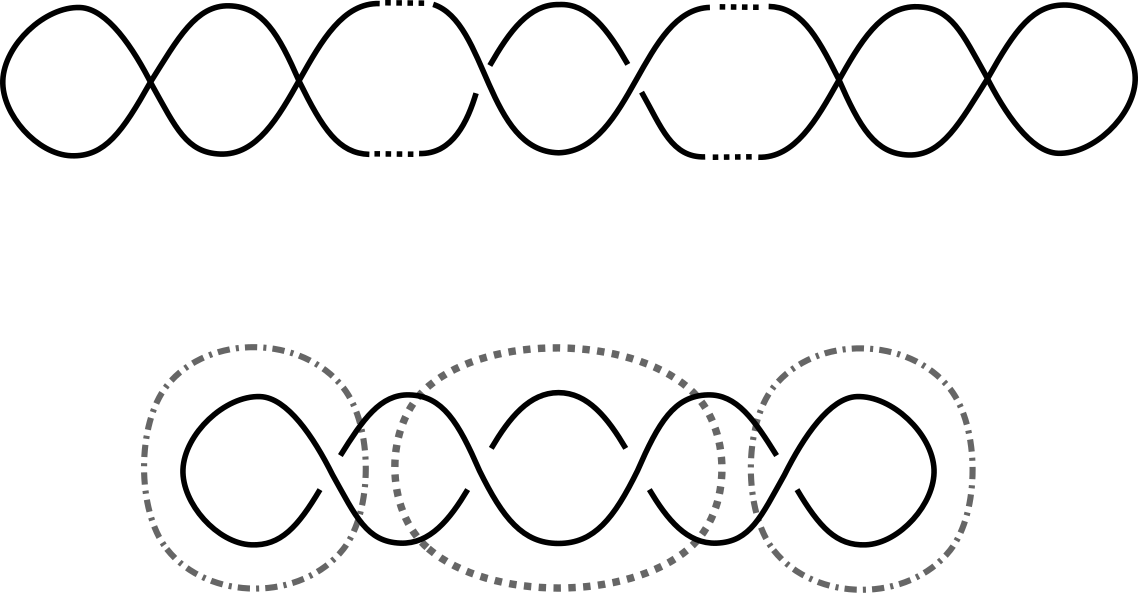}
\caption{On the top, a diagram for the unknot obtained from the crossingless one by a $\Omega_{2/T}$ move followed by succesive $\Omega_1^+$ moves creating crossings of any sign. On the bottom, an example of a triangle involving diagrams of this kind: performing the central $\Omega_2^-$ move or the two lateral $\Omega_1^-$s producec the same diagram.}
\label{fig:controesempio}
\end{figure}

\begin{proof}
Suppose we have a length $3$ cycle, connecting the diagrams $D_0, D_1$ and $D_2$. The total change of crossing number  must be 0, hence we can immediately exclude most cases: a priori the only possible combinations (up to permutations) of $3$ Reidemeister moves  that could work are:
\begin{enumerate}
\item $\Omega_3$-$\Omega_3$-$\Omega_3$
\item $\Omega_3$-$\Omega_2$-$\Omega_2$
\item $\Omega_3$-$\Omega_T$-$\Omega_2$
\item $\Omega_3$-$\Omega_T$-$\Omega_T$
\item $\Omega_3$-$\Omega_1$-$\Omega_1$
\item $\Omega_2$-$\Omega_1$-$\Omega_1$
\item $\Omega_T$-$\Omega_1$-$\Omega_1$
\end{enumerate}

It is easy to exclude cases $(1)$ to $(4)$ using Arnold's $St$ invariant: in any cycle (not containing $\Ru$s) the number of $\Omega_3$ moves must be even. Case $(5)$ can instead be excluded using Hass-Nowik's invariant: the $\Omega_3$ move contributes to $I_{lk}$ with two consecutive terms (that is, of the form $A_n - A_{n+1}$ for $A = X$ or $Y$, and $n \in \Z$), while the $\Omega_1$s can only add some terms of the form $\pm A_0$. Hence the total change in the sum can not be $0$.\\

Finally we can focus on cases $(6)$ and $(7)$ and exclude the former. First notice that, in order to preserve the crossing number, a $\Omega_2^{\pm}$, must be followed by two $\Omega_1^{\mp}$.
Moreover, using Hass-Nowik's invariant we can conclude that the crossings involved in the $\Omega_1$ moves have different signs, and that the $\Omega_2$ is unmatched.

Define the \emph{self-intersection number} $SI(P)$ of a region $P$ in the complement $S^2 \setminus D$, as the number of crossings in the boundary of $P$ that connect $P$ to itself.
We can associate to each diagram $D$ an unordered $N$-tuple
$SI(D) = ( SI(P_1), SI(P_2), \cdots, SI(P_N)  )$ where $N$ is the number of regions in $S^2 \setminus D$.

Performing a $\Omega^-_1$ move always decreases the self-intersection number of a single region by 1, and leaves the self-intersection numbers of the other regions unchanged, see Figure \ref{fig:selfint1}.

\begin{figure}[h!]
\includegraphics[width=6cm]{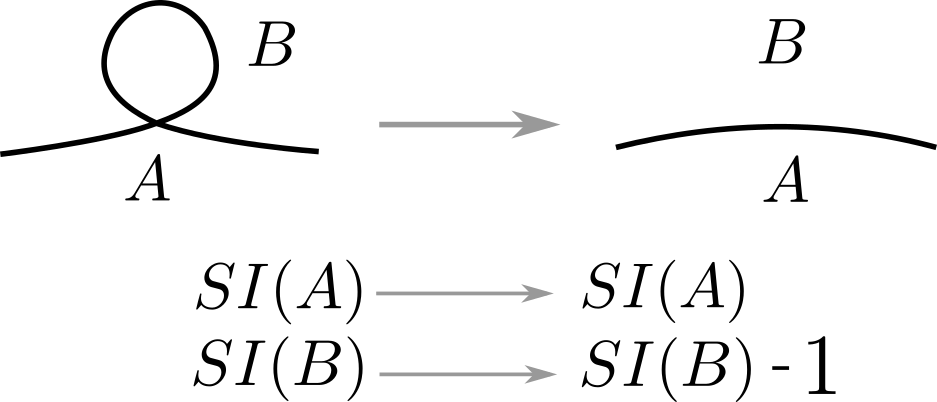}
\caption{Undoing an  $\Omega_1$ move decreases $SI(B)$ by $1$ and leaves $SI(A)$ unchanged.}
\label{fig:selfint1}
\end{figure}
On the other hand, a $\Omega^-_2$ move can change the self-intersection $N$-tuple in two different ways, depending on whether the regions denoted $A$ and $E$ in the lower part of Figure \ref{fig:selfint2} coincide or not\footnote{The regions denoted by $B$ and $C$ are always distinct, otherwise the diagram would represent a two component link.}. If $A$ and $E$ coincide, then the component $SI(A)$ of $SI(D)$ decreases by $2$ when the move is performed (as in the upper part of Figure \ref{fig:selfint2}). 
In the other scenario, the only change in $SI(D)$ comes from the merging of the regions $B$ and $C$; the new region formed has as self-intersection number greater or equal to $SI(B)+SI(C)$ (lower part of Figure \ref{fig:selfint2}).
\begin{figure}
\includegraphics[width=5cm]{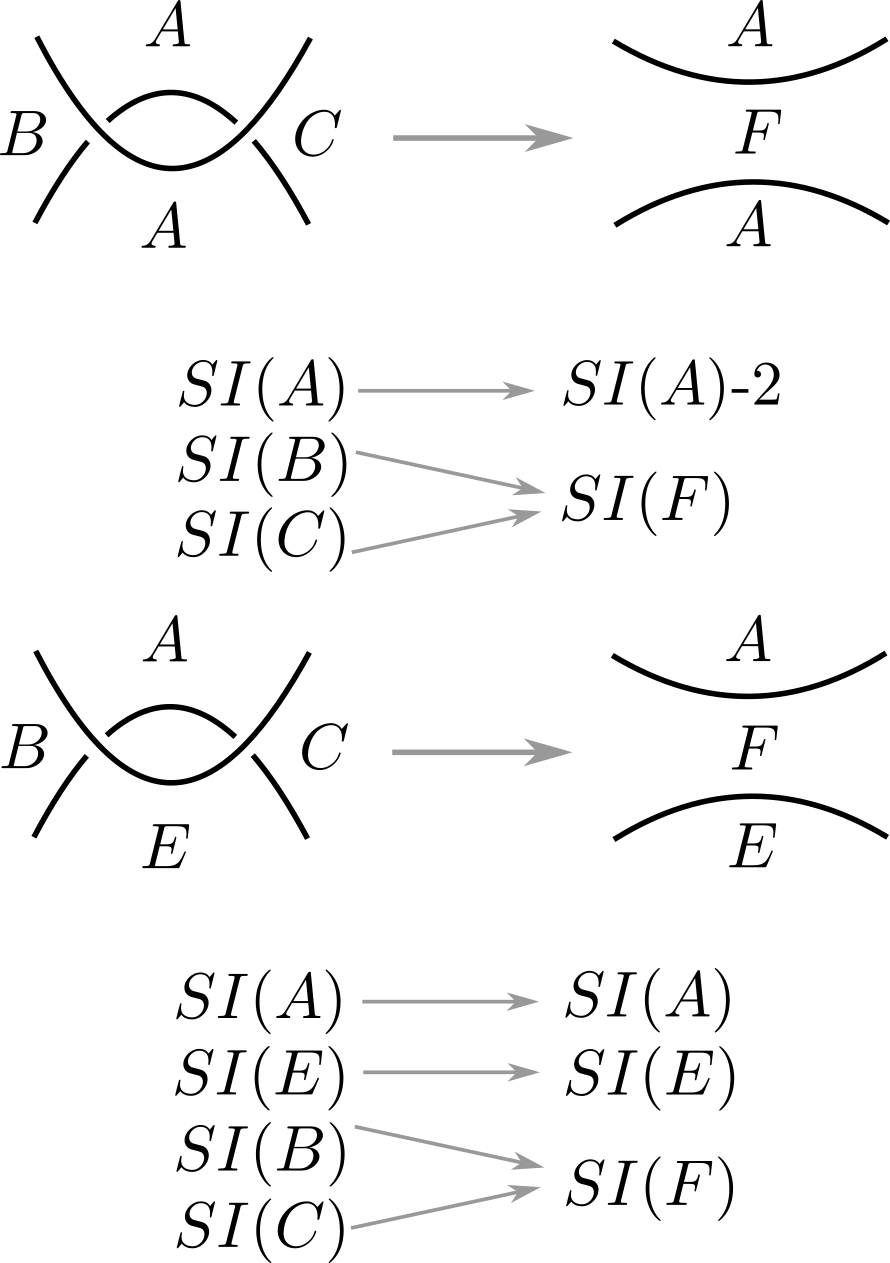}
\caption{Changes in the self-intersection numbers for an $\Omega_2^-$ move.}
\label{fig:selfint2}
\end{figure}

Suppose now by contradiction that there exists a cycle of the form $\Omega_2^{\pm}$-$\Omega_1^{\mp}$-$\Omega_1^{\mp}$.  That means we can obtain a diagram $D'$ from $D$ either by performing an $\Omega_2^-$ move or a sequence of $2$ $\Omega_1^-$ moves on $D$ and that the changes in $SI(D)$ must be the same.
Now observe that, while the self-intersection number of at least one region decreases with two consecutive $\Omega_1^-$s, if in the $\Omega_2$ move the regions $A$ and $E$ are distinct, the sum of the self-intersections over all regions is increased or left unchanged. This fact allows us to exclude the case in which the $\Omega_2$ move is as in in the lower part of Figure \ref{fig:selfint2}.

We will find it useful to divide the discussion in cases, depending on the mutual positions of the curls undone by the $\Omega_1^-$ moves. The relevant portions of the initial diagram $D_0$ are displayed in Figure \ref{fig:cases} for each of these possibilities: in the first and second row we show the mutual positions the curls can have if they do not both appear in $D_0$; in other words, the $1$-region undone by the second $\Omega_1$ appears after undoing the first curl\footnote{Recall that these two moves must have opposite signs.}.

In the third row letters indicate the regions touched by the curls: these regions can either coincide or not.

\begin{figure}
\includegraphics[width=5cm]{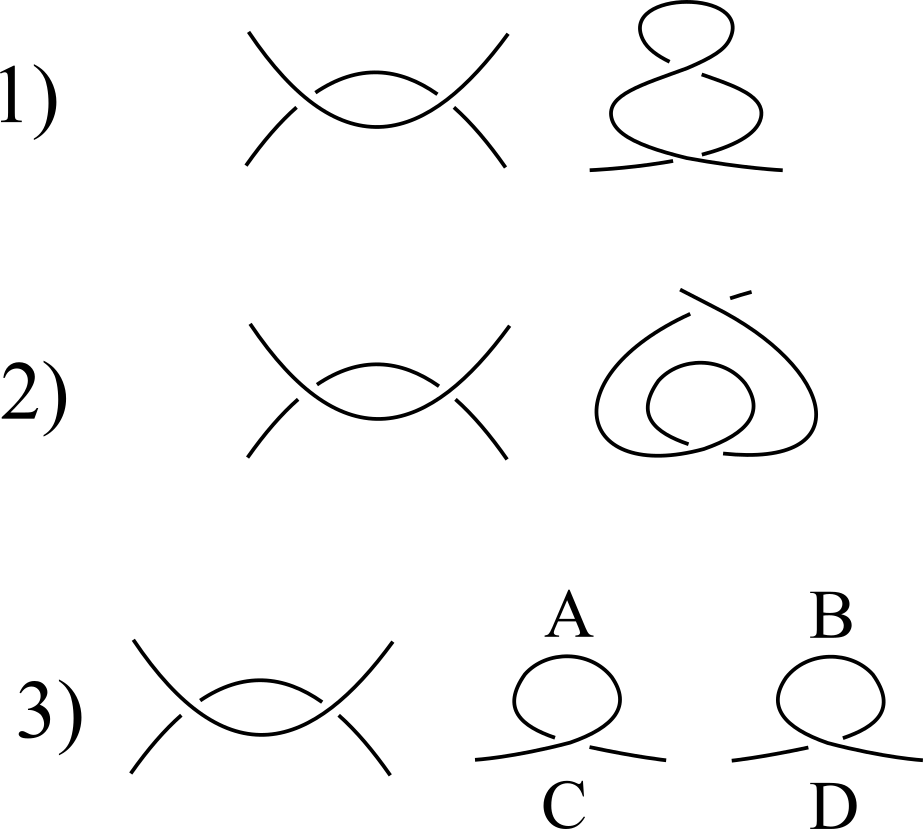}
\caption{In each row we show the portions of $D_0$ involved in the $\Omega_*$ moves. In the first and second row, we assume that the curls undone by the $\Omega_1$ moves do not appear both in the diagram, while in the third case they do. Letters in the latter row indicate the regions touched by the curls.}
\label{fig:cases}
\end{figure}

In what follows, for each one of these cases we will either prove that the $\Omega_2^-$ needs to be a tentacle move (Figure \ref{fig:tentmoves}), or exclude the configuration.

The first case in Figure \ref{fig:cases} can be settled as follows: consider Figure \ref{fig:a4triangle}; we can see that in the diagram $D_2$ there is a tentacle appearing. Since the diagrams $D_1$ and $D_2$ are equivalent by hypothesis, the tentacle in $D_2$ must appear somewhere in $D_1$. Moreover, since they coincide out of the portions of diagram drawn in the figure, the presence of a tentacle in $D_1$ \emph{far away}\footnote{Here and in what follows, by ``far away'' we mean that  the configuration is left untouched by the moves considered.} from the portions drawn would imply the existence of an identical tentacle somewhere in $D_2$, and we would still have one more tentacle in $D_2$ than in $D_1$. 

A similar recursive argument applies if the tentacle appears by undoing the $\Omega_1$-moves, as in Figure \ref{fig:tentacoliappearing}. In fact, in each of the cases shown in Figure \ref{fig:tentacoliappearing}, there is a configuration in $D_2$ which does not appear in $D_1$, and the only way to have $D_1=D_2$ is to find this configuration in $D_1$. Iterating this procedure, one sees that the two diagrams can not be equivalent.   
\begin{figure}
\includegraphics[width=8cm]{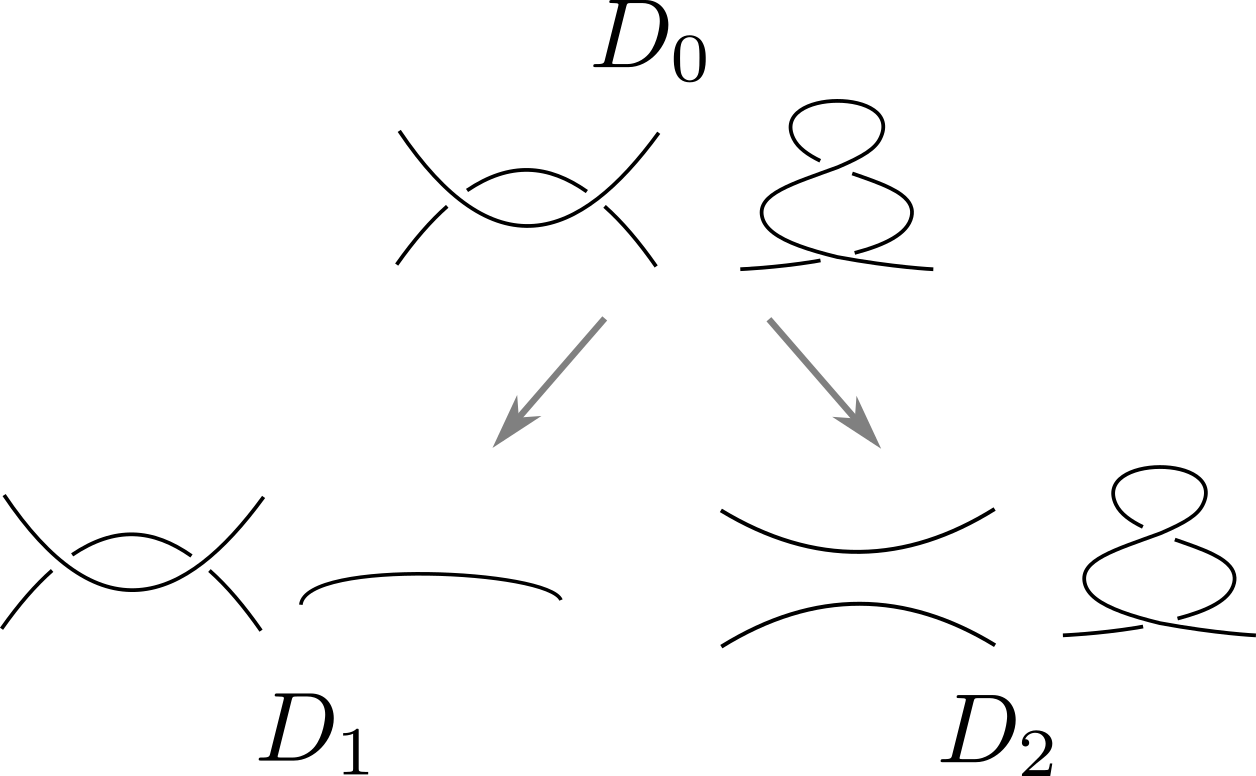}
\caption{$D_1$ is the diagram obtained after performing the two $\Omega_1^-$ moves: together they cancel the tentacle appearing in $D_0$. $D_2$ is the result of undoing the $\Omega_2$ move in the left-hand portion of diagram in $D_0$.}
\label{fig:a4triangle}
\end{figure}
\begin{figure}
\includegraphics[width=11cm]{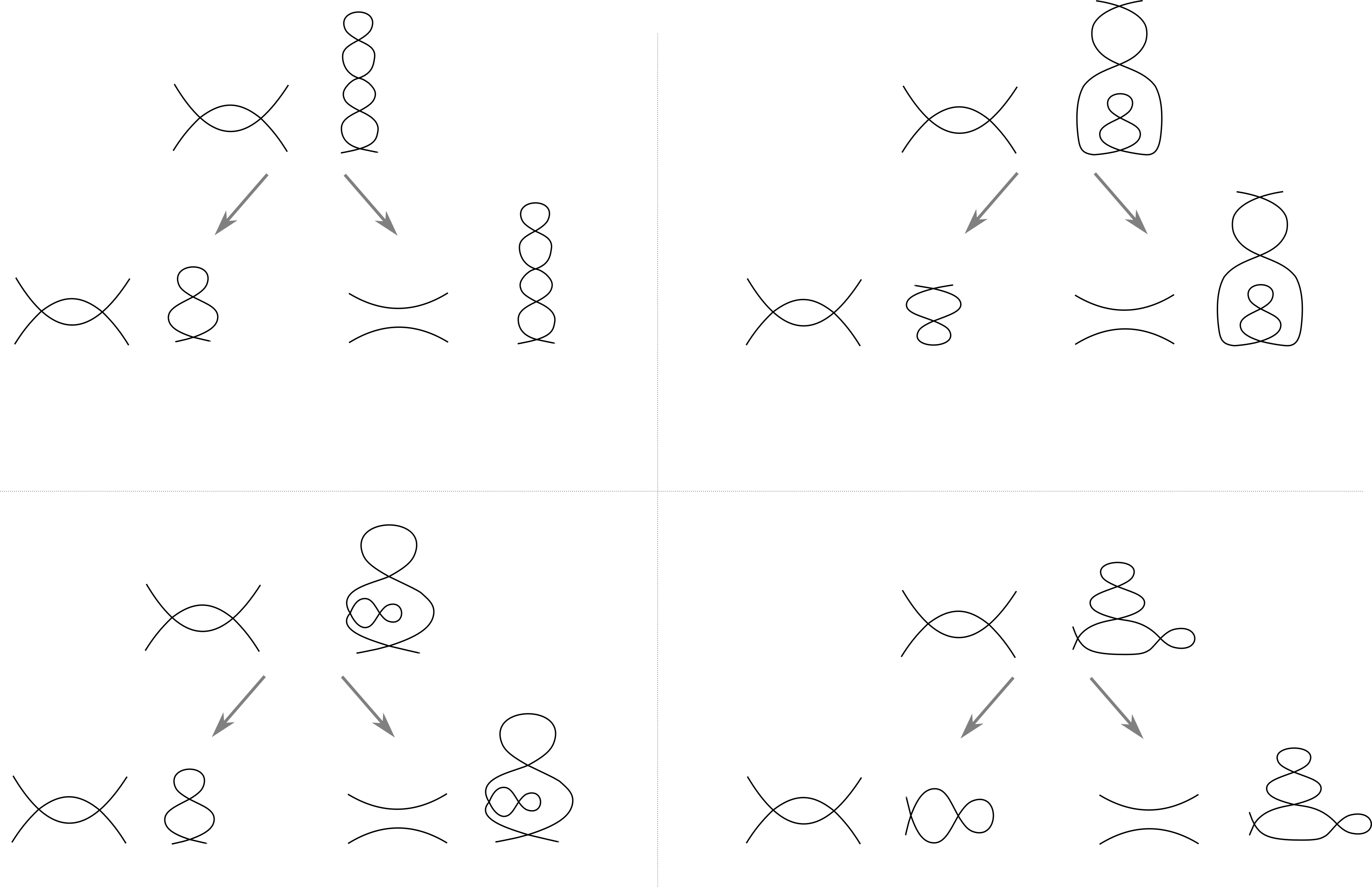}
\caption{The only possible ways a tentacle can appear in case $1)$ after performing two $\Omega_1^-$ moves. In each of these cases we can exclude that the diagrams form a triangle by a recursive argument.}
\label{fig:tentacoliappearing}
\end{figure}

It follows that the only way $D_1$ can be equivalent to $D_2$ is if the $\Omega_2$ is in fact an $\Omega_T$; thus the corresponding part of the diagram is a portion of a tentacle\footnote{Note that if the portion of diagram involved in the $\Rd^-$ is attached to a piece whose projection is the same as a tentacle, but with ``wrong'' crossings, then the diagram does not fit in a triangle.}.

For the second case consider Figure \ref{fig:a5triangle}: we apply the same argumentation of case $1)$. Since the diagrams $D_1$ and $D_2$ are equivalent the ``heart shaped'' configuration in $D_2$ must appear somewhere in $D_1$. Moreover, since the diagrams coincide out of the portions drawn in the figure, the presence of a heart in $D_1$ \emph{far away} from the portions drawn would imply the existence of an identical heart somewhere in $D_2$, and we would still have one more heart in $D_2$ than in $D_1$. The same argument of case $1)$ (as in Figure \ref{fig:tentacoliappearing}) works if we assume that the heart appears after undoing two $\Omega_1$ moves.
\begin{figure}[h!]
\includegraphics[width=8cm]{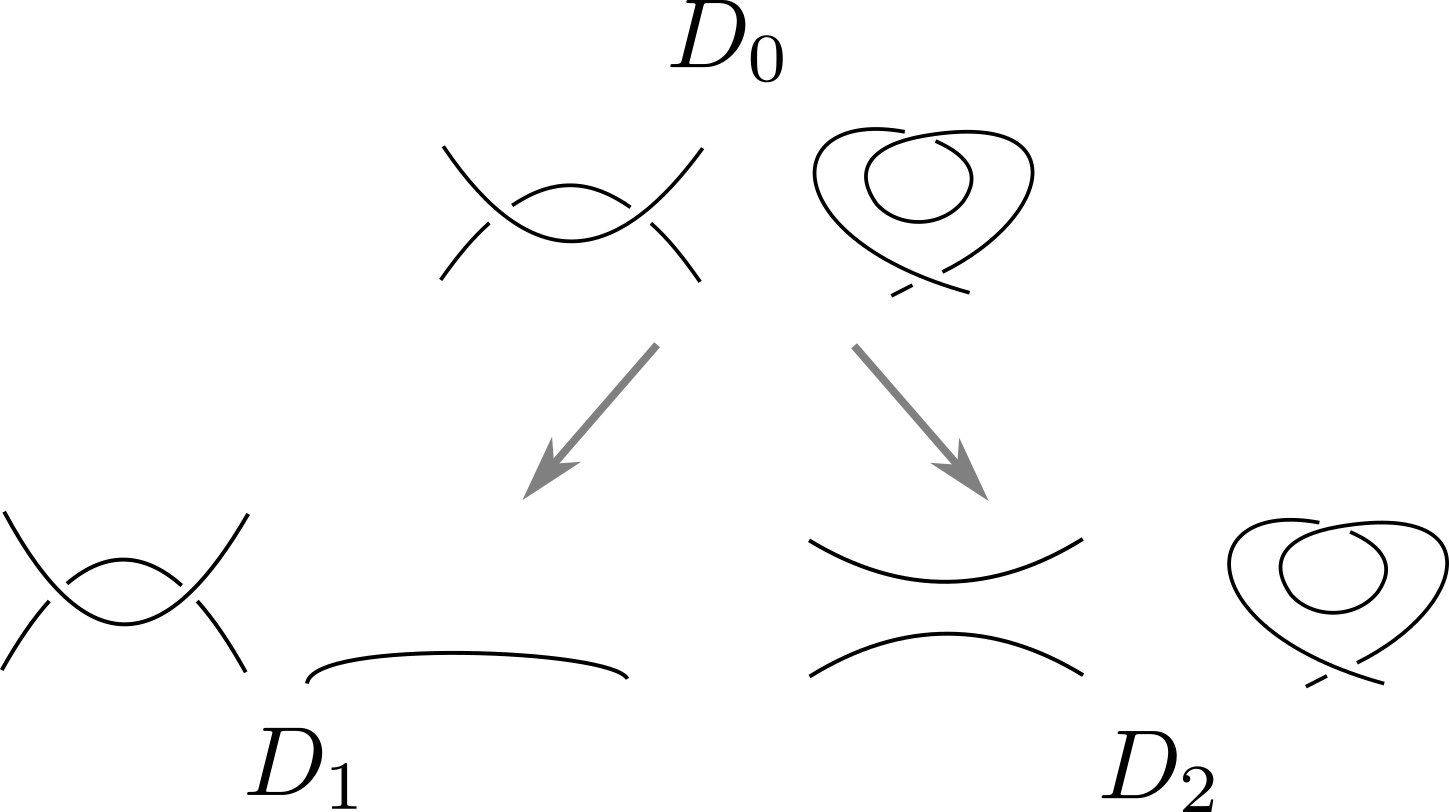}
\caption{$D_1$ is the diagram we obtain after performing the two $\Omega_1^-$ moves: together they cancel the heart configuration appearing in $D_0$. $D_2$ is the result of undoing the $\Omega_2$ move in the left-hand portion of diagram in $D_0$.}
\label{fig:a5triangle}
\end{figure}
It follows that the only possibility is the one depicted in Figure \ref{fig:a3triangletris}.
\begin{figure}
\includegraphics[width=8cm]{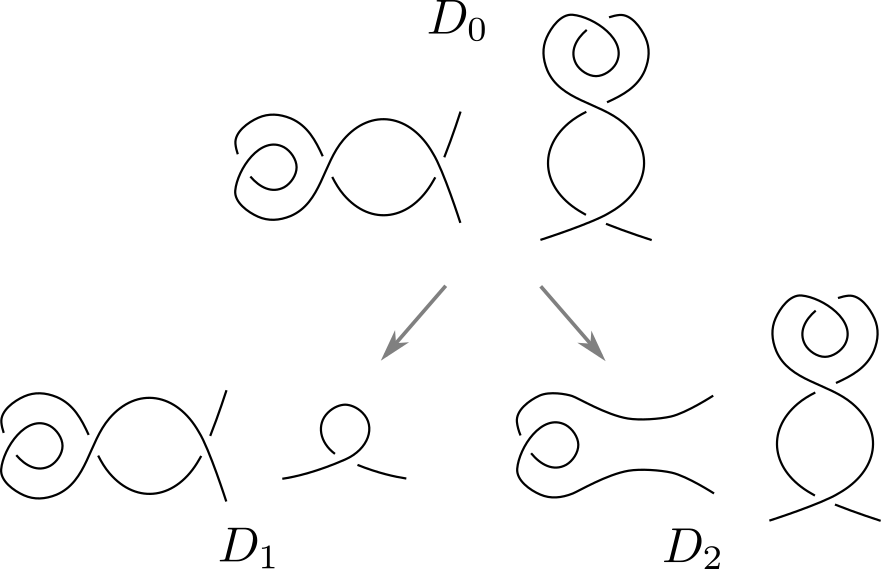}
\caption{$D_1$ is the diagram we obtain after performing the two $\Omega_1^-$ moves: together they cancel the heart configuration appearing in $D_0$, leaving a curl. $D_2$ is the result of undoing the $\Omega_2$ move in the left-hand portion of diagram in $D_0$.}
\label{fig:a3triangletris}
\end{figure}

We can however prove that in this case $D_1$ and $D_2$ can not be equivalent diagrams, and thus exclude it. To this end, consider the blackboard framing of the projection: there are two possibilities to be considered, since we can draw the framing curve on either side of the diagram. Then, since we do not know how the portions of diagrams involving the moves are positioned with respect to each other, we need to consider four different cases, all shown in Figure \ref{fig:a3bistriangle}.
\begin{figure}
\includegraphics[width=13cm]{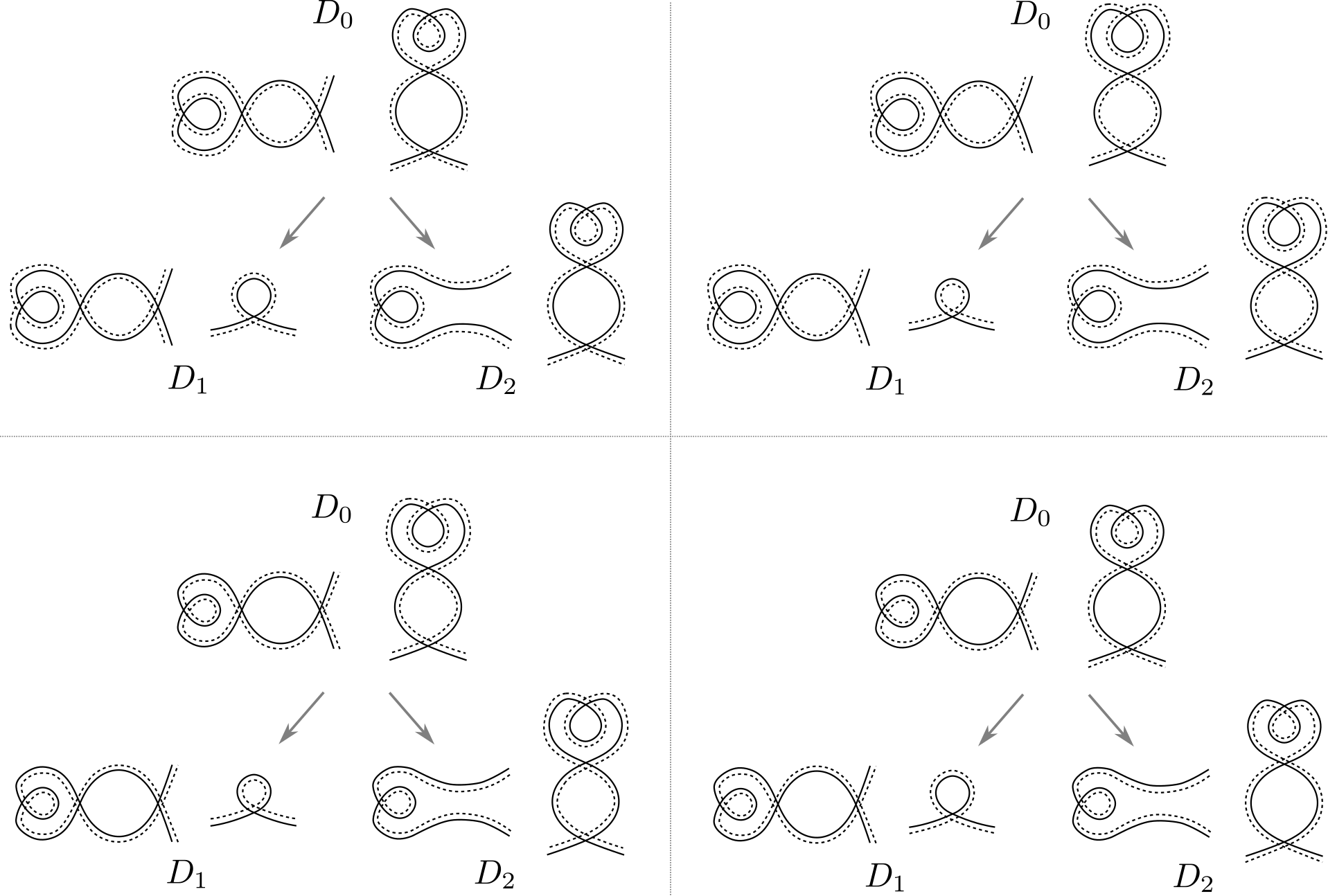}
\caption{The four possible choices for the blackboard framing.}
\label{fig:a3bistriangle}
\end{figure}
It is easy to argue that $D_1$ and $D_2$ can not be equivalent, since the number of curls having the blackboard framing ``inside'' is different in all four cases.

We are now left with case $3)$ from Figure \ref{fig:cases}. As usual, it is convenient to have in mind all the diagrams involved in the triangle, as in Figure \ref{fig:a5triangle9}.
\begin{figure}
\includegraphics[width=11cm]{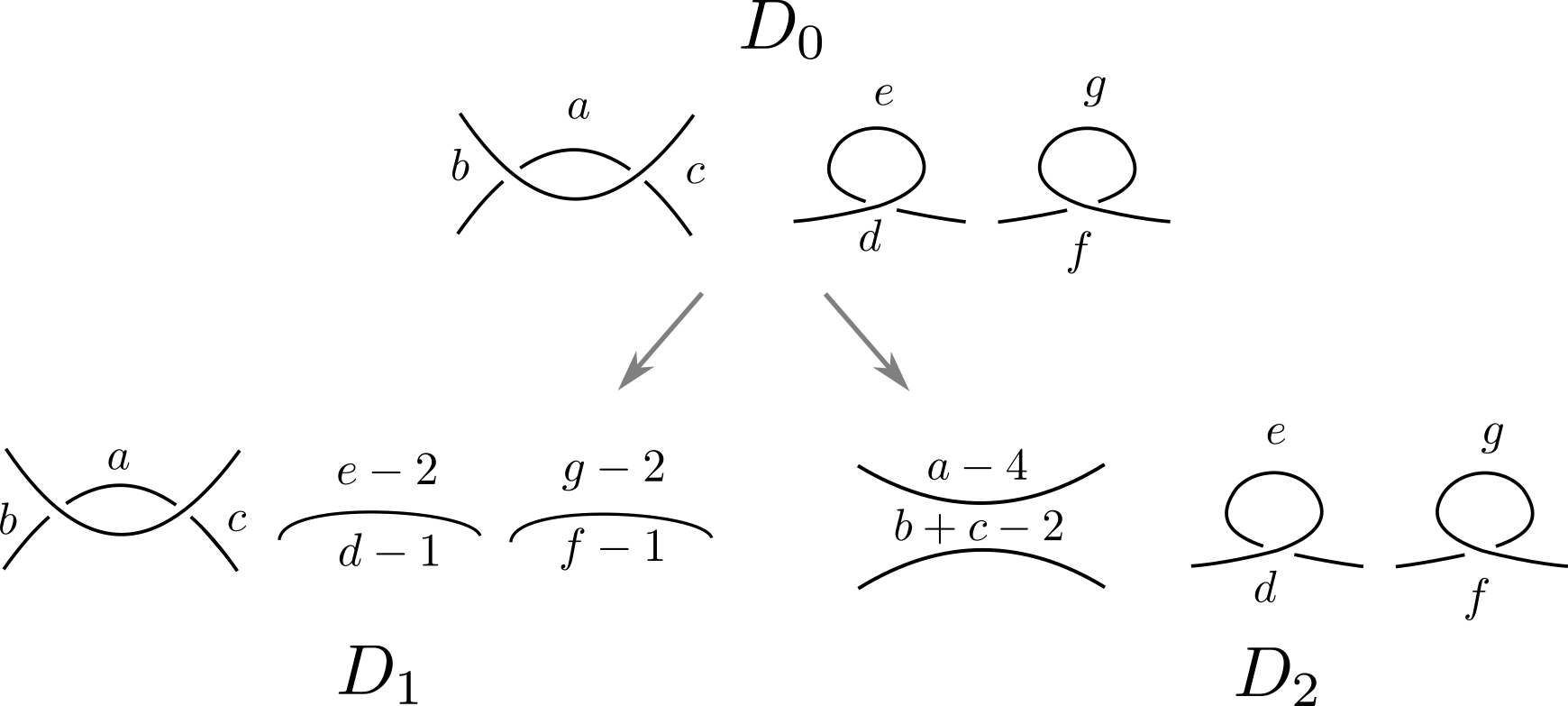}
\caption{Lowercase letters indicate the number of edges in each region. Keep in mind that, even if in the picture all the regions are depicted as different, some of them might coincide.}
\label{fig:a5triangle9}
\end{figure}

From Figure \ref{fig:a5triangle9} it is apparent that there are two more visible $1$-regions in $D_2$ than in $D_1$: since by hypothesis the diagrams are equivalent, there must be two curls in $D_1$ as well. Suppose by contradiction that the $\Omega_2$ is not a tentacle (that is $b,c \neq 1$).
Then, the straight lines in $D_1$ left by undoing the $\Omega_1$ moves must be part of two curls. If we assume  (Figure \ref{fig:a5triangle9}) that the regions touching the curls in $D_0$ are different, this means that at least two among $d-1,e-2,f-1$ and $g-2$ must be equal to $1$. Since the cases $(e,d) = (3,2)$ and $(f,g) = (2,3)$ are impossible, we are in one of the cases described in Figure \ref{fig:a5trianglebis}.
\begin{figure}
\includegraphics[width=9cm]{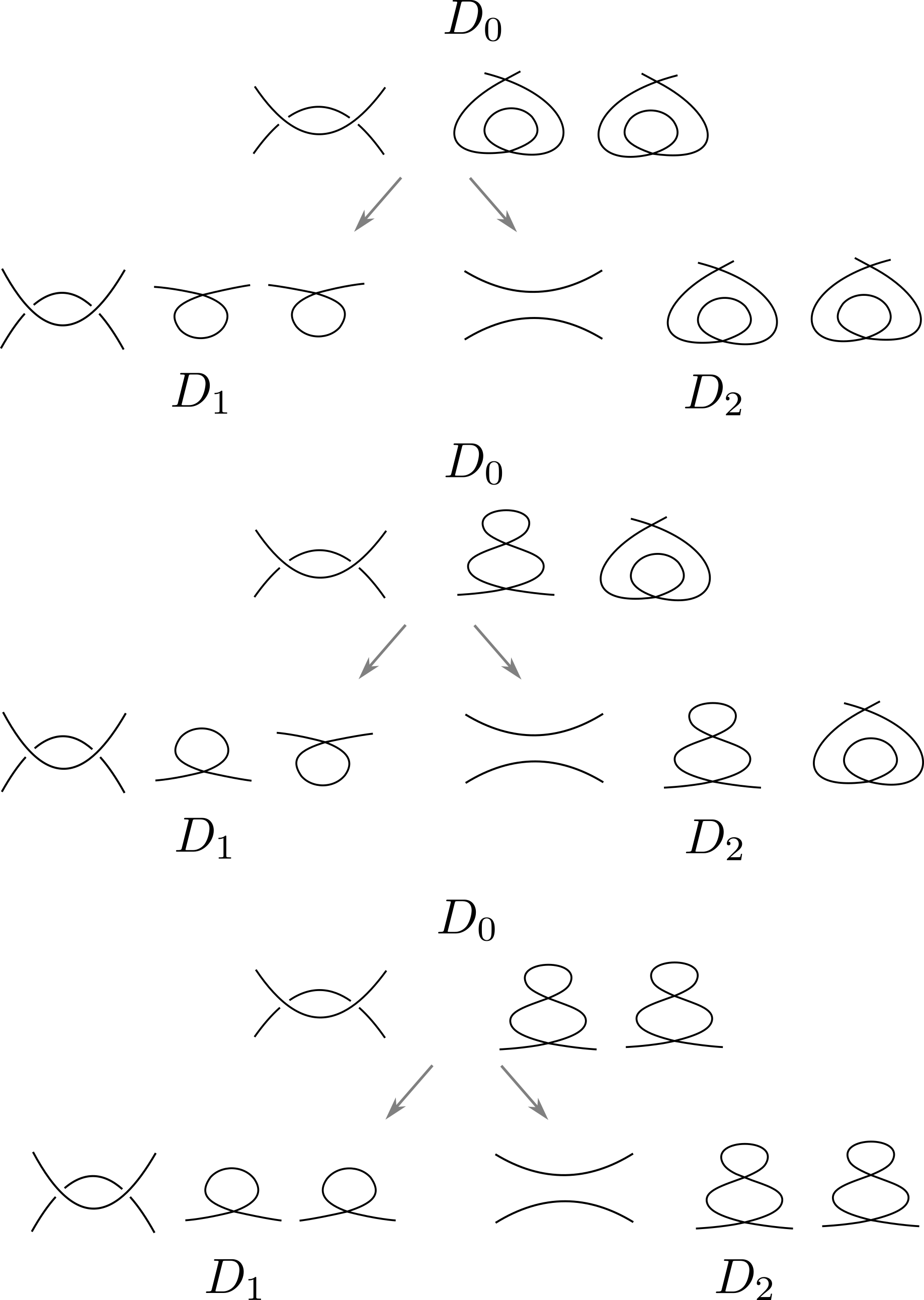}
\caption{The three possible kinds of triangles, assuming that the regions touching the curls undone by the $\Omega_1$ moves do not coincide.}
\label{fig:a5trianglebis}
\end{figure}

Before dealing with the configurations described in Figure \ref{fig:a5trianglebis}, we need to consider the cases in which some of the regions touching the curls coincide, keeping in mind that we are assuming that the $\Omega_2$ is not a tentacle move. We have the following possibilities (capital letters denote regions, as in Figure \ref{fig:cases}):
\begin{itemize}
\item[I] $A=B$ and $C=D$;
\item[II] $A=D$ and $B=C$;
\item[III] $A=B$ and $C\neq D$;
\item[IV] $C=D$ and $A \neq B$;
\item[V] $A=D$ and $B\neq C$;
\item[VI] $C=B$ and $A \neq D$.
\end{itemize}
Note that the upper and lower regions left by undoing a curl can not be both $1$-regions. Thus, cases I and II are straightforward to exclude, since if the regions coincided pairwise it would be impossible to recover two $1$-regions from the straight lines left by undoing the starting curls.
For the same reason, in the third case the only way to have two curls left after the $\Omega_1^-$ moves have been performed is to have a $2$-region below each $1$-region in $D_0$. Thus, case III fits in the bottom configuration described in Figure \ref{fig:a5trianglebis}. Similarly, in case IV, we would necessarily have both the curls in $D_0$ lying inside a $4$-region, forming an \emph{heart} and fitting in the top case shown in Figure \ref{fig:a5trianglebis}.
The latter two cases are symmetric, and it is enough to discuss only the first one. Again, since it is impossible to have both the upper and lower region left by undoing a curl as $1$-regions, it follows that the $\Omega_1^-$ moves must be performed in portions of diagrams identical to the ones drawn in the middle case of Figure \ref{fig:a5trianglebis}.

Let's now discuss carefully Figure \ref{fig:a5trianglebis}.
Consider the configuration at the top of the figure: since the diagrams $D_1$ and $D_2$ are equivalent, the \emph{heart} configurations in $D_2$ must appear somewhere in $D_1$. Moreover, since they coincide out of the portions drawn in the figure, and since using again a recursive argument we can exclude that they are created by undoing the curls in $D_0$, the only possibility is that these hearts are attached to the $\Omega_2$-portion in $D_1$, as shown in Figure \ref{fig:triangolocuori}.

\begin{figure}
\includegraphics[width=7cm]{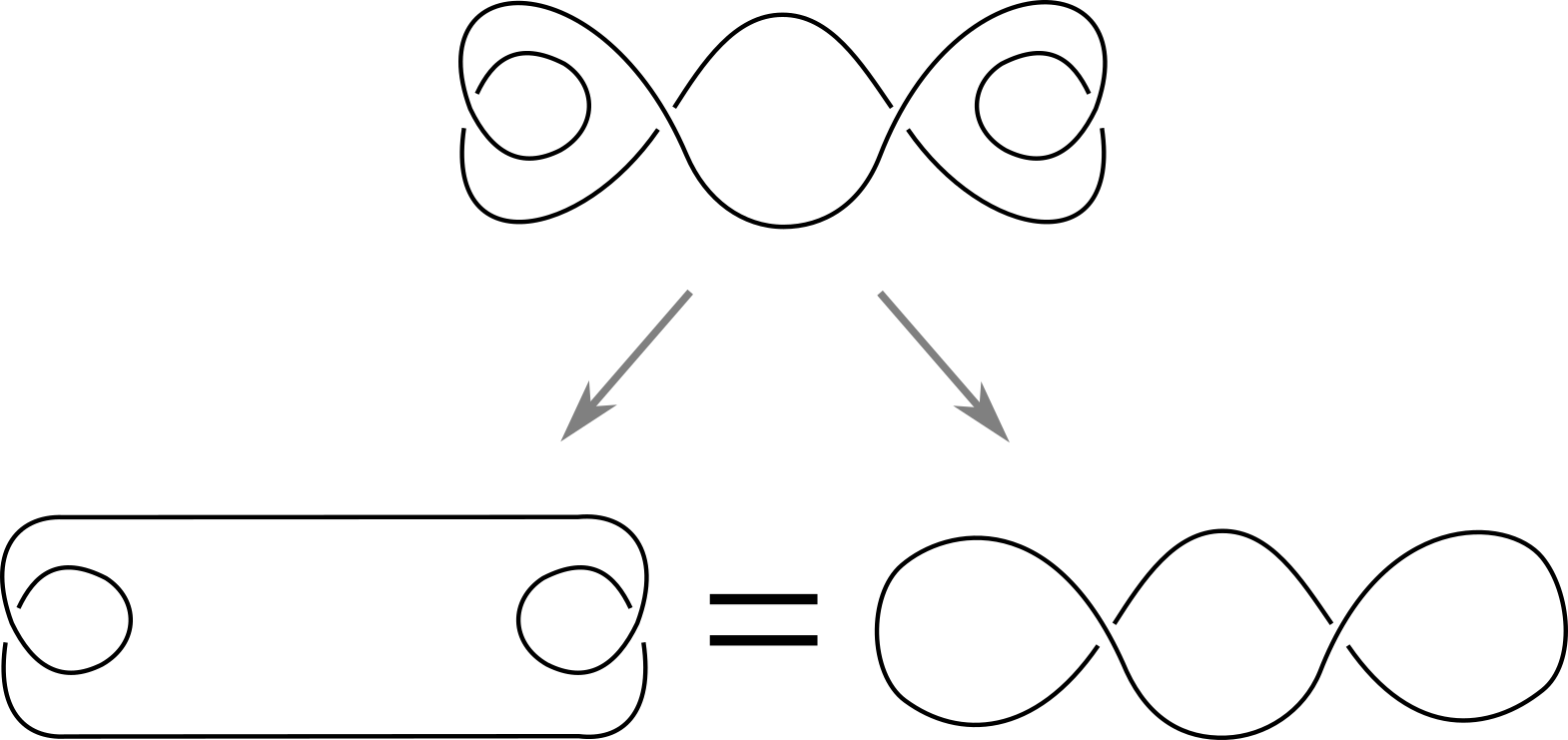}
\caption{A triangle for the unknot fitting in the family described in the statement of the Theorem.}
\label{fig:triangolocuori}
\end{figure}

Note that even if in this case $D_1$ and $D_2$ turn out to be equivalent, the diagrams represent the trivial knot, and more precisely they fit in the family described in the statement of the Theorem. \\
Notice that  this can only happen if we are working with diagrams on $S^2$; if we are working with planar diagrams instead, this configuration does not fit in a triangle.
Now, call \emph{generalised tentacles} the configurations formed by two succesive $\Omega_1$ moves made one on top of the other, as appearing in $D_2$ and $D_0$ on the bottom of Figure \ref{fig:a5trianglebis}. If the crossings are such that the configurations form tentacles, then this implies (as in case $1)$ of Figure \ref{fig:cases}, that the $\Omega_2$ is in fact a tentacle move.\\
Otherwise, by using a similar recursive argument as before, together with the fact that the upper and lower regions involved in the $\Omega_2$ move coincide, we can exclude both the possibility that the configurations appear in $D_1$ by performing the $\Omega_1^-$ moves, and that they appear somewhere \emph{far away} from the portions of diagram shown. Thus, we see that the only possibility for $D_1$ and $D_2$ to be equivalent occurs when the generalised tentacles are attached\footnote{Or they are part of longer generalised tentacles attached to the $\Omega_2$-portion of $D_1$.} to the $\Omega_2$-portion of $D_2$, forming a diagram for the unknot fitting in the family described in the statement of the Theorem (see Figure \ref{fig:controesempio}). Notice that the triangle in $\mathcal{G}_S(\bigcirc)$ involving the heart configurations described before is a special case of this situation. \\
Finally, we are left with the middle configuration in Figure \ref{fig:a5trianglebis}. As usual, since $D_1$ and $D_2$ are equivalent by hypothesis, the tentacle configuration in $D_1$ has to appear somewhere in $D_2$ as well. Assuming that the $\Omega_2$ is not a tentacle move, since the diagrams coincide out of the portions drawn, using yet again a recursive argument we can exclude that the tentacle is created by undoing the curls in $D_0$; hence the only way to have a tentacle in $D_1$ is the one shown in Figure \ref{fig:ultimo2}.
\begin{figure}
\includegraphics[width=9cm]{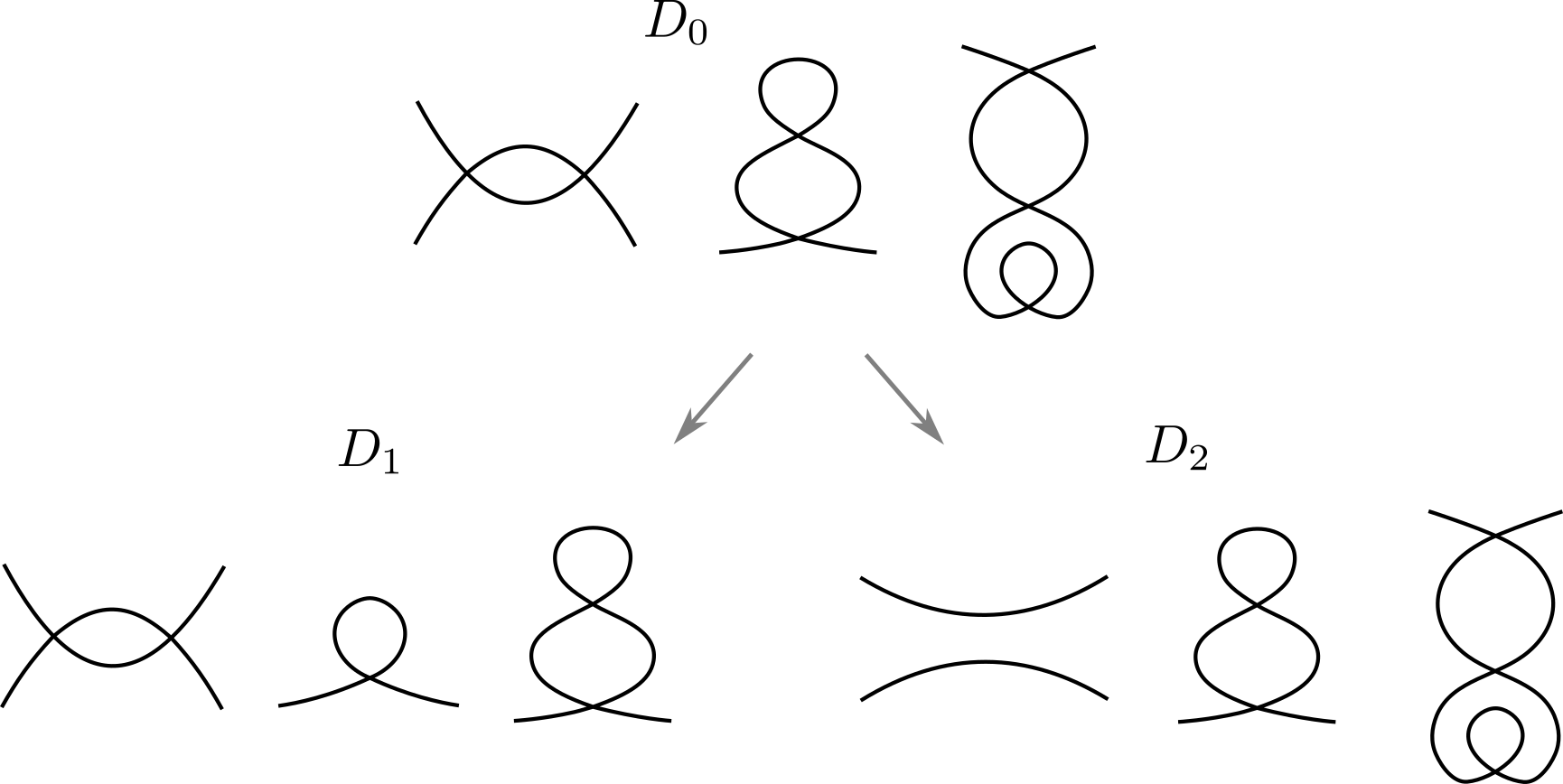}
\caption{$D_1$ is the diagram we obtain after performing the two $\Omega_1^-$ moves: with the first one we cancel the curl inside the heart, while the other has the effect of decreasing the height of the left tentacle by $1$. $D_2$ is the result of undoing the $\Omega_2$ move in the left-hand portion of diagram in $D_0$.}
\label{fig:ultimo2}
\end{figure}
We can however exclude this case as well by adding the blackboard framings. In Figure \ref{fig:ultimo} two of the possible choices of framings are displayed: in both cases $D_1$ and $D_2$ are non-equivalent diagrams, since the framings do not coincide on the tentacles or in the $1$-regions left.
\begin{figure}
\includegraphics[width=9cm]{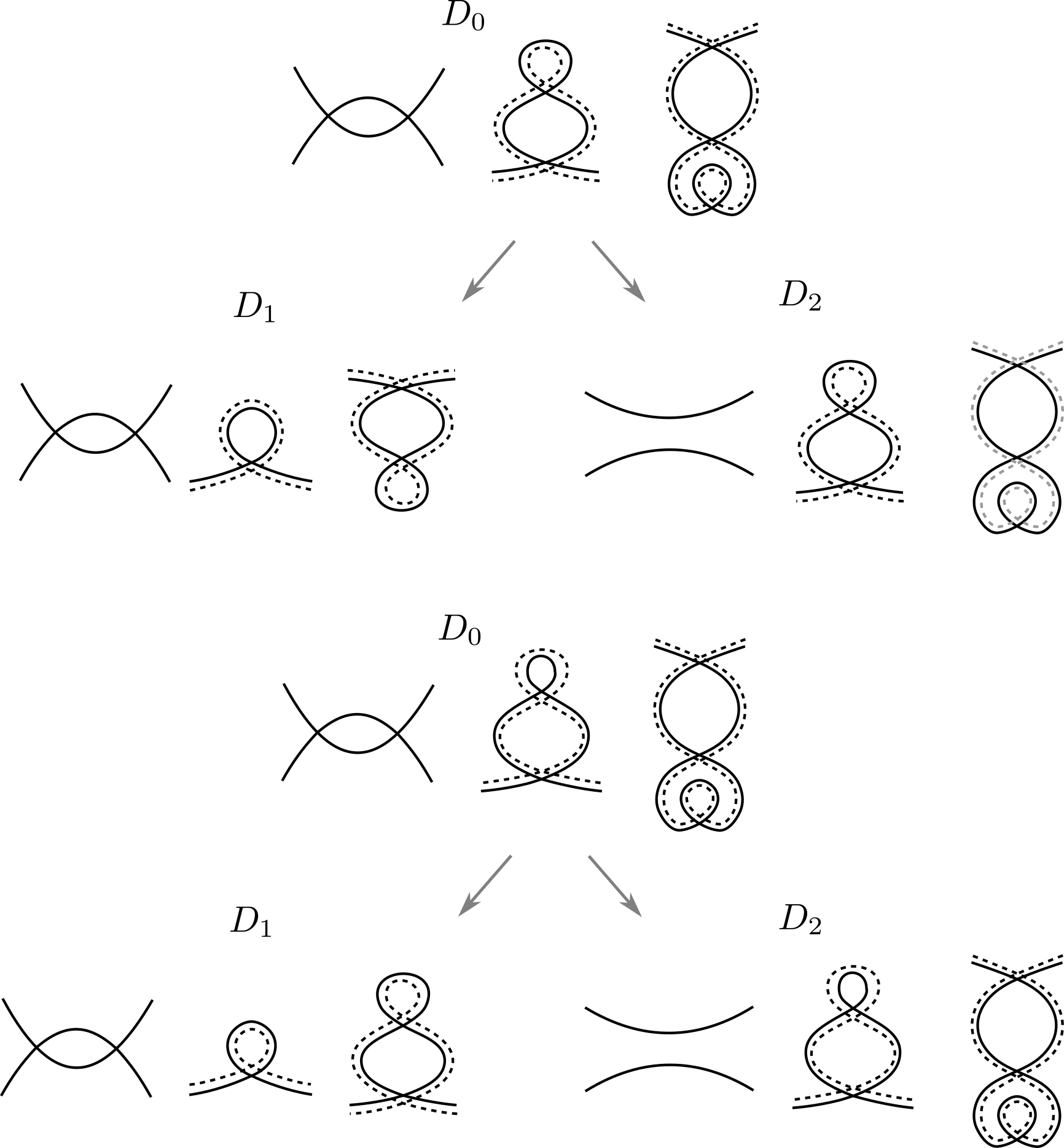}
\caption{Two of the four possible choices of framing. The remaining two can be treated in the exact same way.}
\label{fig:ultimo}
\end{figure}
\end{proof}

\begin{rmk}\label{tuttotrannebanale}
In what follows, unless otherwise specified, all the results will hold for every knot type with the exception of the unknot $\bigcirc$.

\end{rmk}

For each diagram $D \in \diag$, $S(D)$ consists of triangles (possibly attached to one another) with one vertex in $D$, and edges emanating from $D$. Each of these might be a multi-edge.
If we want to study the possible configurations in $S(D)$ involving triangles, by Theorem \ref{prop:lung3}, we just need to restrict to those containing at least one $\Omega_1$ move; various possibilities involving one or more curls/tentacles are shown in Figure \ref{fig:configr1} and \ref{fig:configrmultiple}.
\begin{figure}
\includegraphics[width=14cm]{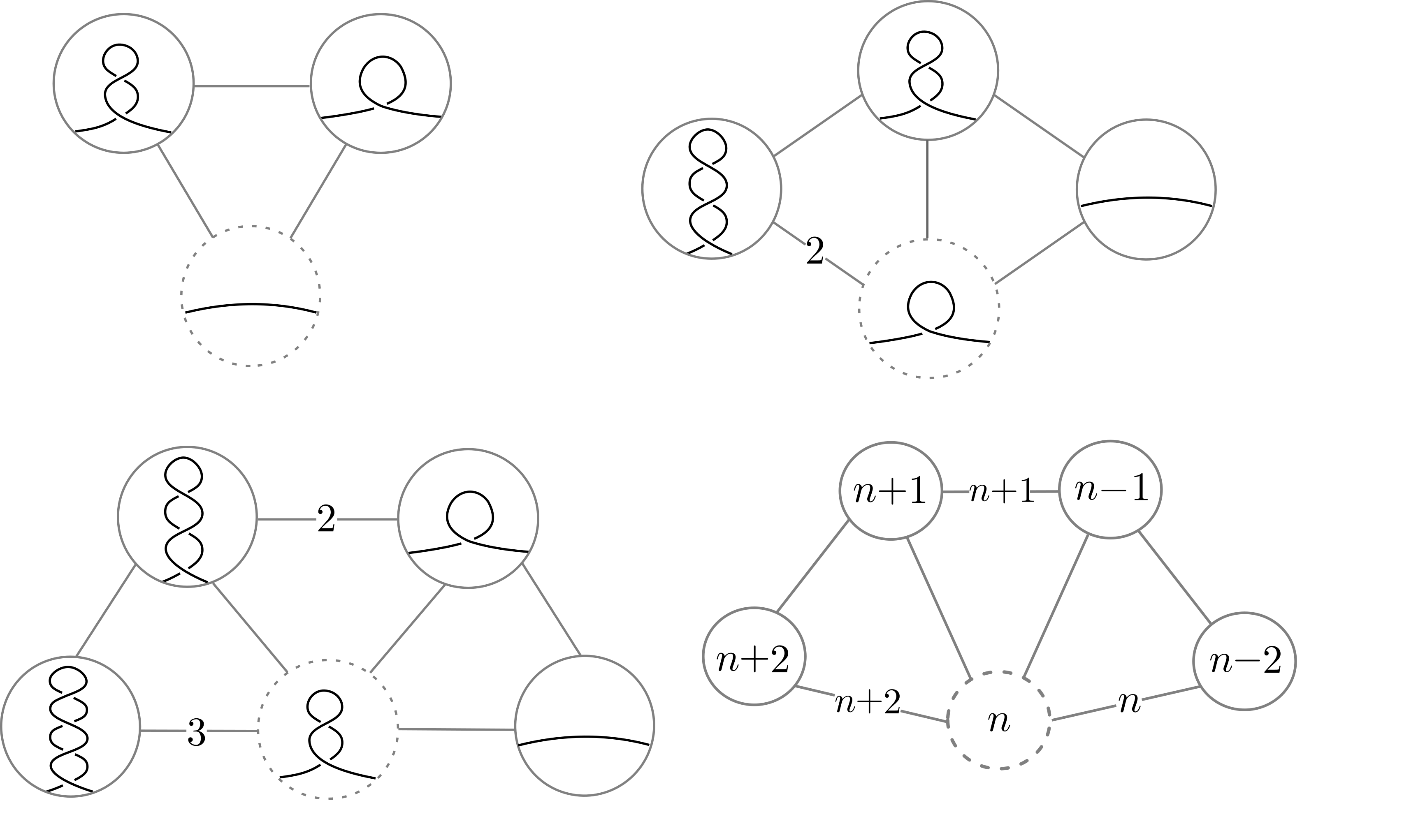}
\caption{Some of the possible configurations in $S(D)$ involving at least one $\Omega_1$ move ($D$ is contained in the dotted circles). The top-left one is present in any $S(D)$, while the others can be found whenever there is a $\Omega_1^-$ (top-right), an height $1$ tentacle (bottom-left), or a tentacle of height $n \ge2$ (bottom-right). Numbered edges denote the valence of the corresponding multi-edge.}
\label{fig:configr1}
\end{figure}
\begin{figure}
\includegraphics[width=14cm]{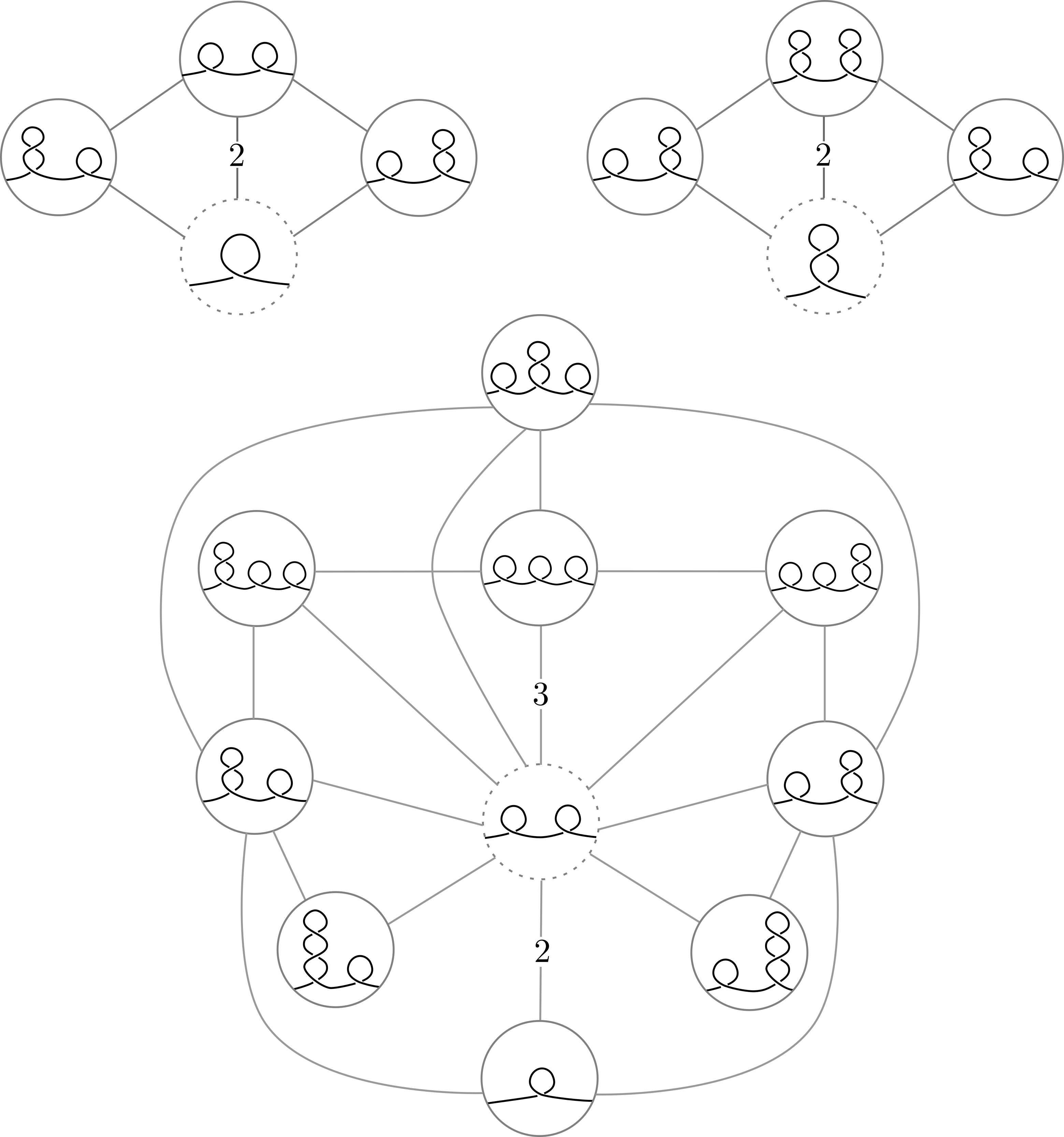}
\caption{Other qualitatively different triangle configurations formed by $\Omega_T$-$\Omega_1$-$\Omega_1$  can be found whenever there are multiple curls or height $1$ tentacles on the same arc.}
\label{fig:configrmultiple}
\end{figure}

So we have a complete description of the short paths that can appear in $\sfera$; note that it makes less sense to pursue a systematic study of longer ($\ge 3$) cycles, since any pair of ``distant'' moves on a diagram produces a cycle of length $4$. In the following we are going to examine more closely the properties and shapes of the various triangles that have been produced during the proof of Theorem \ref{prop:lung3}. This technical analysis is going to be crucial in the proof of the results leading to Theorem \ref{thm:completeness}.

\begin{defi}
We will call  a triangle \emph{normal} if it is of the form described in Figure \ref{fig:triangoloreid}, meaning that all the Reidemeister moves are performed  locally on the same arc.
\end{defi}

\begin{lemma}\label{lemmanormale}
$p_1(D) = 0$ if and only if all the triangles in $S_1(D)$ are normal. 
\end{lemma}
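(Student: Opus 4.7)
The plan is to split the biconditional into its two implications and apply Theorem~\ref{prop:lung3} (together with Remark~\ref{tuttotrannebanale}) in both. By that theorem, every triangle in $S_1(D)$ consists of one $\Rs$-edge and two $\Ru$-edges, so its three vertices carry three consecutive crossing numbers and the signs of its three moves are determined up to reversing orientation.

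For the forward implication, suppose $p_1(D)=0$ and let $T=\{D,D_1,D_2\}$ be any triangle in $S_1(D)$. Since $D$ has no curls, it cannot be the source of an $\Ru^-$ move, and since every tentacle contains curls, it cannot be the source of an $\Rs^-$ move either. Hence $D$ must realise the minimum crossing number of $T$, so the two edges at $D$ are an $\Rs^+$ creating a tentacle on some arc $A$ of $D$ and an $\Ru^+$ creating a curl on some arc $B$ of $D$. The third edge of $T$ is then forced to be an $\Ru^-$ removing one of the two curls of this tentacle; the surviving curl lives on $A$, and for the triangle to close it must coincide with the curl on $B$, which forces $B=A$. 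All three moves of $T$ therefore act on a single arc of $D$, so $T$ is normal.

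For the converse, suppose $p_1(D)\geq 1$ and fix a curl $C_1$ of $D$, with crossing $X$ and loop arc $A_C$. Let $V_0$ be obtained from $D$ by the $\Ru^-$ that undoes $C_1$, and let $V_1$ be obtained from $D$ by an $\Ru^+$ adding a new curl $E$ on one of the external arcs at $X$, with the sign chosen so that $E$ and $C_1$ together form a tentacle of height $1$. Both $V_0$ and $V_1$ are adjacent to $D$, and they are joined to each other by an $\Rs^+$ performed on the merged arc of $V_0$; so $\{D,V_0,V_1\}$ is a triangle in $S_1(D)$. However, the edge from $D$ to $V_0$ is performed on the loop arc $A_C$, while the edge from $D$ to $V_1$ is performed on an external arc at $X$; these are distinct arcs of $D$, so this triangle is not normal.

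The main technical delicacy is pinning down what ``performed locally on the same arc'' means when $D$ already carries a curl: the loop arc of that curl and the two strands entering its crossing must be treated as distinct arcs of $D$ in the sense of Section~\ref{sec:grafo}. With that convention in place, both implications reduce to the crossing-number and locality bookkeeping above, the only subtle point being the observation in the converse that the triangle closes precisely because $E$ is placed adjacent to $C_1$ so that the pair $\{C_1,E\}$ forms the tentacle produced by the $\Rs^+$ joining $V_0$ and $V_1$.
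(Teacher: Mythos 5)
Your forward implication is essentially sound, and in fact cleaner than the paper's case analysis: since $p_1(D)=0$ forces $D$ to be the vertex of smallest crossing number in any triangle through it, the $\Rs^+$ edge creates a fresh height-$1$ tentacle, and the closing $\Ru$ edge must delete the unique $1$-region of the top diagram (the tip of that tentacle), which pins the whole triangle down to one arc. The only soft spot is the step ``which forces $B=A$'': isotopic results do not force equal sites when the diagram or the knot type has symmetries (cf. Figure \ref{fig:multiedge} and Remark \ref{rmk:triangolinormali}); in that situation the triangle is still realised by moves on the single arc $A$, so the conclusion survives, but the inference as stated is too strong and should be addressed.

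The converse, however, has a genuine gap: the triangle you build does not exist. A height-$1$ tentacle is the nested configuration of Figure \ref{fig:tentacle}; of its two crossings only the inner one (the tip) bounds a $1$-region. A crossing added to $D$ by an $\Ru^+$ move necessarily creates a monogon, so it can complete the pre-existing curl $C_1$ to a height-$1$ tentacle only if it is placed on the loop arc $A_C$, becoming the tip --- never on an external arc at $X$. With $E$ on an external arc, $C_1$ and $E$ are merely two side-by-side curls with no bigon between them, so no single $\Rs$ (or $\Rd$) move joins $V_0$ to $V_1$, and $\{D,V_0,V_1\}$ is not a triangle. If you repair this by putting $E$ on $A_C$, the triple becomes exactly the triangle of Figure \ref{fig:triangoloreid} based at the merged arc of $V_0$ (bottom $V_0$, middle $V_0$ plus the base curl $C_1$, top $V_0$ plus the tentacle), i.e.\ a normal triangle, so it cannot witness $p_1(D)\neq 0$. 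The paper's construction is genuinely different: it keeps $D$ as the bottom vertex, performs an $\Ru^+$ on either side of the pre-existing twirl, then a further $\Ru^+$, and closes the triangle with an $\Rs^-$ performed on the pre-existing twirl, so that the old curl is consumed by the $\Rs^-$ and the first new curl takes its place; it is this interaction with the pre-existing curl that produces the two non-normal triangles. Relatedly, your criterion for non-normality (``the two edges at $D$ are performed on distinct arcs of $D$'') is not the paper's definition, which refers to the form of Figure \ref{fig:triangoloreid} as seen from the bottom vertex of the triangle; so even granting existence of your triangle, more would be needed to conclude it is not normal.
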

\begin{proof}
If $p_1(D)\neq0$, then there are at least two triangles sharing a $\Omega^+_1$ edge, as shown in the top-right part of Figure \ref{fig:configr1}. This implies that there are at least two non-normal triangles, since one can perform the first $\Omega^+_1$ move on either side of the pre-existing twirl, and complete this edge to a triangle by performing the successive $\Omega^+_1$ and $\Omega^-_T$ on the pre-existing twirl.\\
Viceversa, suppose that $p_1(D)=0$. Thanks to Theorem \ref{prop:lung3} we know that all triangles are made up by $\Rs$-$\Ru$-$\Ru$; moreover, every $\Omega_1$ and $\Omega_T$ is part of at least one triangle. We wish to understand all the possible configurations forming
a triangle. To this end, we can use Figure \ref{fig:cases}, substituting\footnote{We can assume that the $\Omega_T$ move happens on the top of the tentacle.} with $\Omega_T$ configurations the $\Omega_2$s, as in Figure \ref{fig:casestriangle2}.

\begin{figure}
\includegraphics[width=5cm]{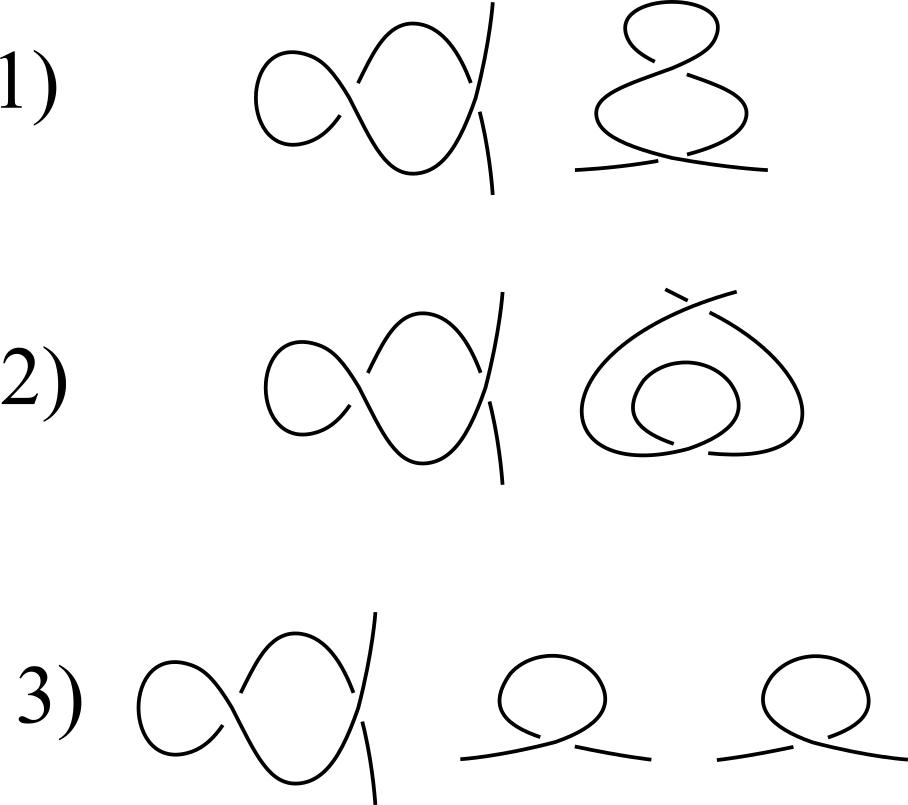}
\caption{In each row the portions of $D_0$ involved in the $\Omega_*$ moves are shown. In the first and second row, we assume that the curls undone by the $\Omega_1$ moves do not both appear in the diagram.}
\label{fig:casestriangle2}
\end{figure}

Since $p_1(D)=0$, we can exclude the occurrence of cases $2$) and $3$) of Figure \ref{fig:casestriangle2}. In fact, in each of these triangles, the diagram with lower crossing number admits at least one $1$-region. Let's suppose that there exists a non-normal triangle fitting in case $1$) of Figure \ref{fig:casestriangle2}. By definition, this means that the moves are not performed on the same arc. Then, in the lower crossing number diagram, there is at least a $1$-region (see Figure \ref{fig:nonchiamarelefigureuguali}), contradicting the hypothesis $p_1(D)=0$.
\end{proof}

\begin{figure}
\includegraphics[width=11cm]{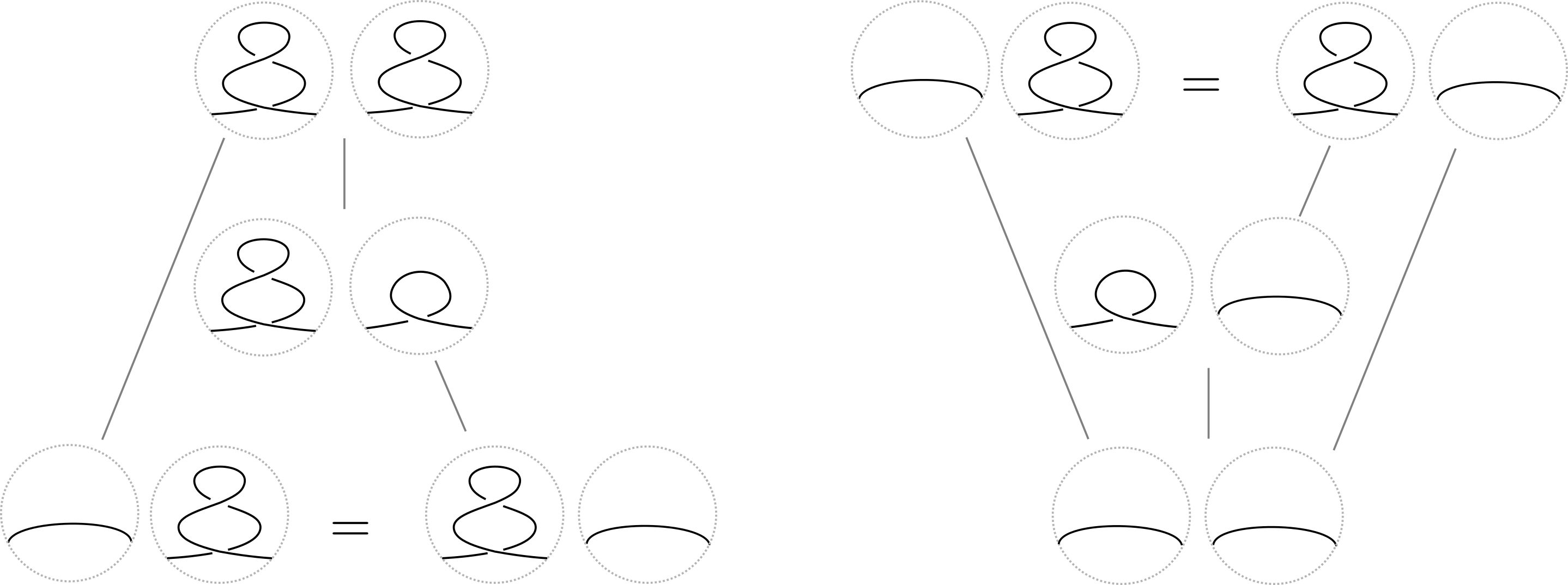}
\caption{On the left, a non-normal triangle fitting in case $1)$. The diagrams with the lowest crossing number (on the bottom) are identified. On the right, a normal triangle fitting in case $1)$. Here the diagrams with the greatest crossing number are identified, and there is a $\Omega_T$ multi-edge. }
\label{fig:nonchiamarelefigureuguali}
\end{figure}

\begin{rmk}\label{rmk:triangolinormali}
If $p_1(D) \neq 0$, then more complicated triangles appear. We show an example of a non-normal triangle fitting in case $1)$ of Figure \ref{fig:cases} in Figure \ref{fig:triangoloperiodico1}.

\begin{figure}
\includegraphics[width=9cm]{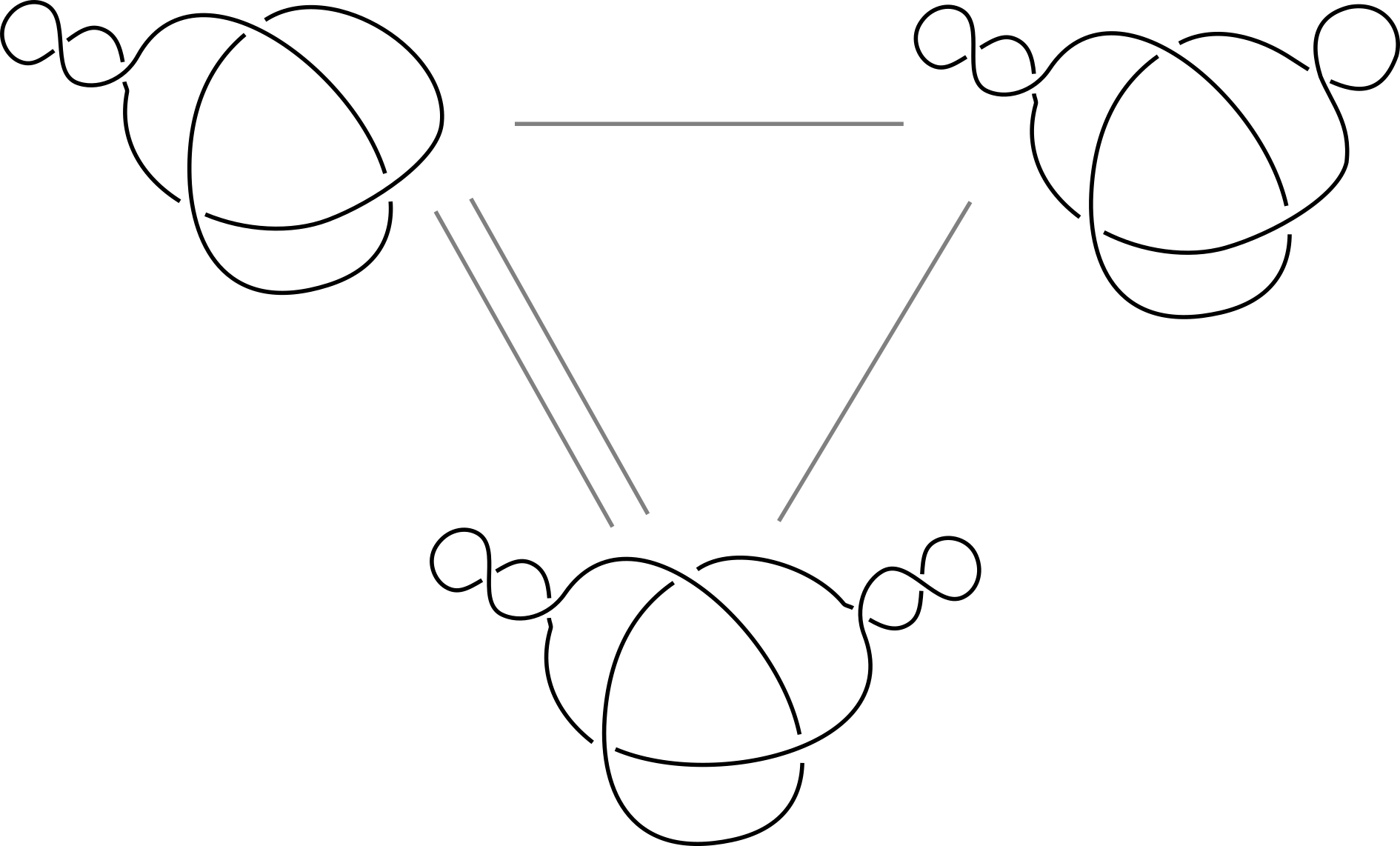}
\caption{A triangle for a periodic knot fitting in case $1)$. The left and lower vertices are connected by a multi-edge.}
\label{fig:triangoloperiodico1}
\end{figure}

In what follows we are going to analyse what happens in the remaining cases. In fact, case $2)$ of Figure \ref{fig:cases} can be excluded as in the proof of Theorem \ref{prop:lung3}.

It is convenient to divide the investigation on triangles fitting in case $3)$ of Figure \ref{fig:cases} in two subcases (denoted by $3$a and $3$b respectively), differing in whether or not one of the $\Omega_1^-$ moves happens on the top part of the tentacle undone by the $\Omega_T$. If it does, then we are in the situation described in Figure \ref{fig:triangoloperiodico3}, and we notice that the diagram with the lowest crossing number contains at least one $1$-region; an example of a non-normal triangle fitting in case $3a)$ is shown in Figure \ref{fig:mmperiodico}.
\begin{figure}
\includegraphics[width=6cm]{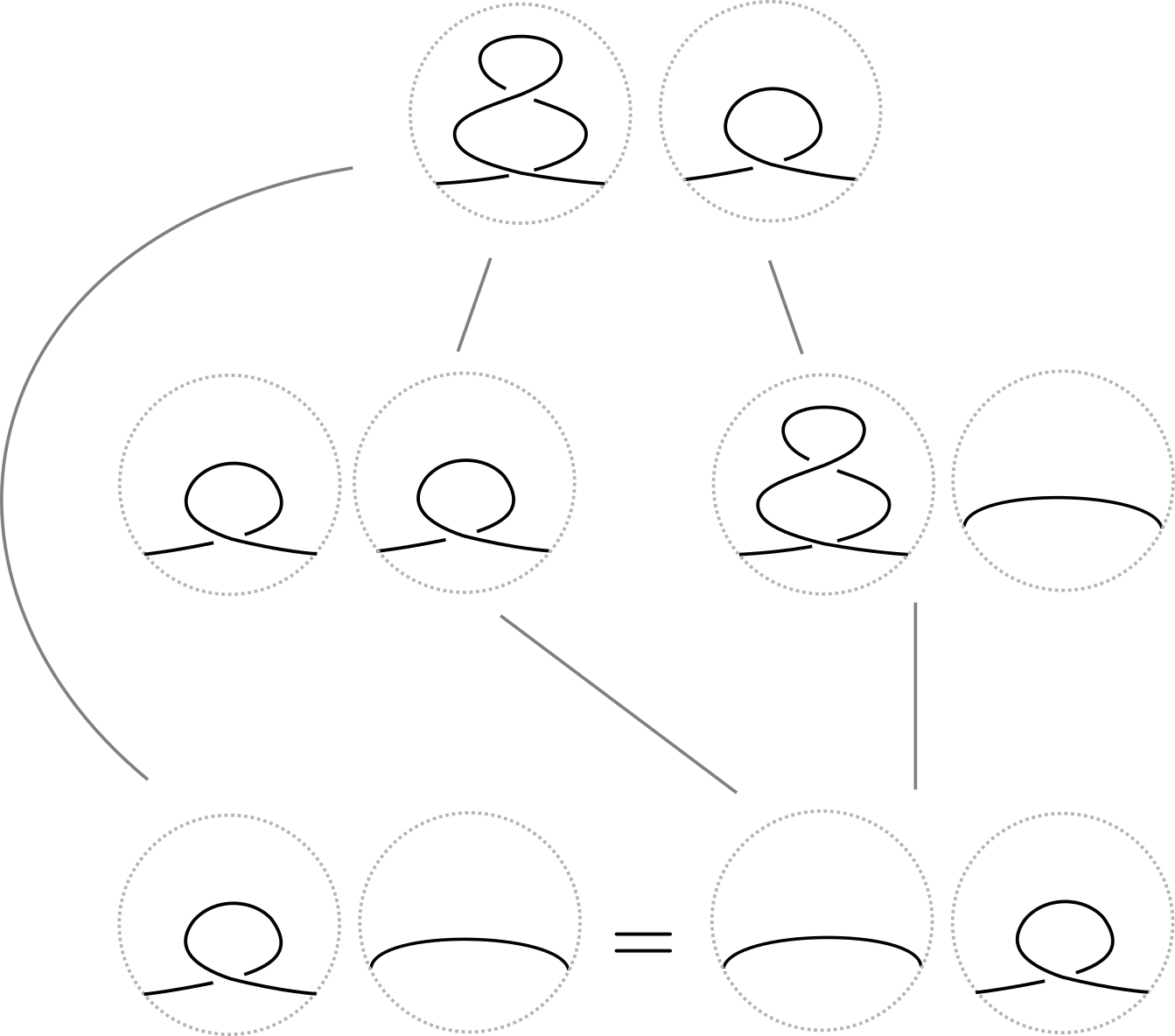}
\caption{A non-normal triangle fitting in case $3a$). Notice that the diagrams with the lowest crossing number (on the bottom) are identified and present at least a curl.}
\label{fig:triangoloperiodico3}
\end{figure}
\begin{figure}
\includegraphics[width=5cm]{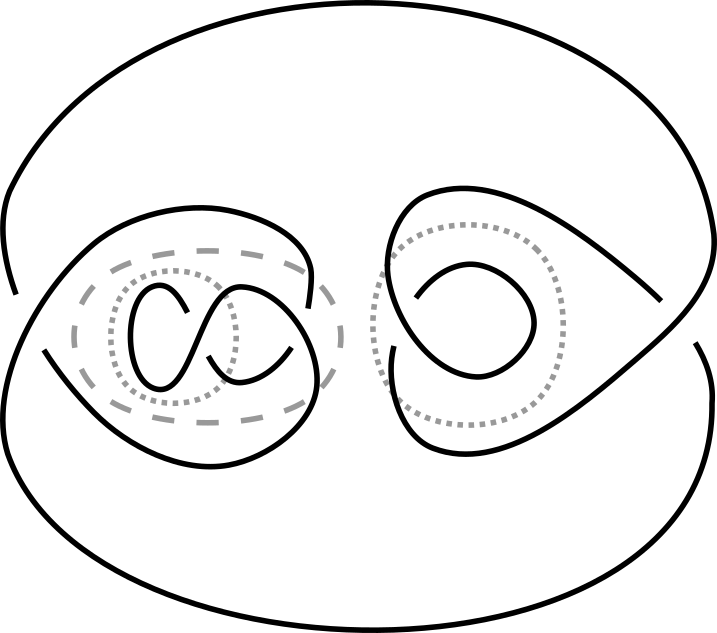}
\caption{A triangle for a periodic (un)knot, fitting in case $3a)$. Dotted circles enclose the $\Ru^-$s, and the dashed one the $\Rs^-$. This specific example was pointed out by M. Marengon.}
\label{fig:mmperiodico}
\end{figure}
Finally, if both the curls undone by the $\Omega_1$ moves are not the top part of the tentacle, then the diagrams appear as in Figure \ref{fig:triangoloperiodico2}.
\begin{figure}
\includegraphics[width=7cm]{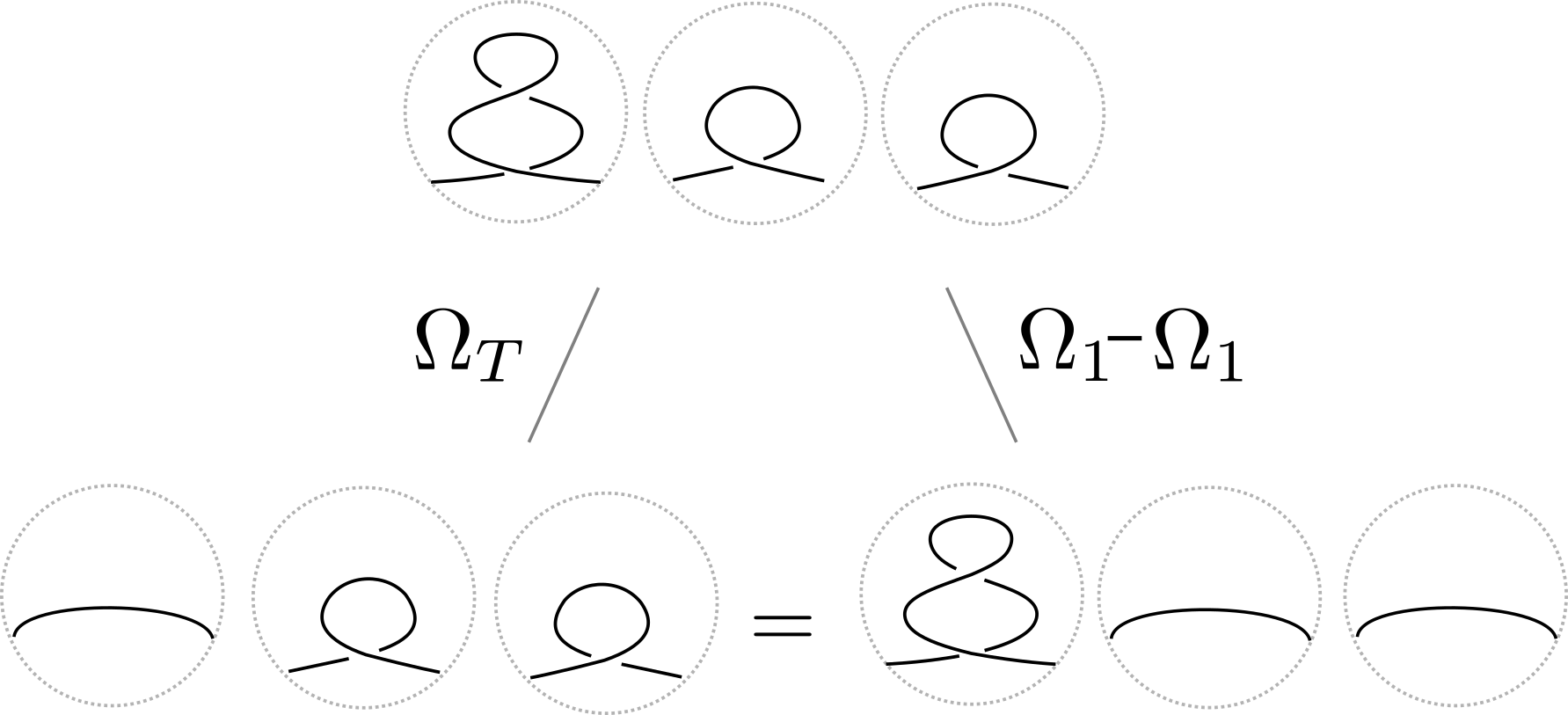}
\caption{A non-normal triangle fitting in case $3b)$. Notice that the diagrams with the lowest crossing number (on the bottom) are identified, and present at least a tentacle configuration. See also Figure \ref{fig:triangoloperiodico5} for an explicit example.}
\label{fig:triangoloperiodico2}
\end{figure}

Again, we can conclude that the diagram with the lowest crossing number presents a tentacle configuration. 
We show an example of a non-normal triangle fitting in case $3b)$ in Figure \ref{fig:triangoloperiodico5}.
\begin{figure}
\includegraphics[width=5.5cm]{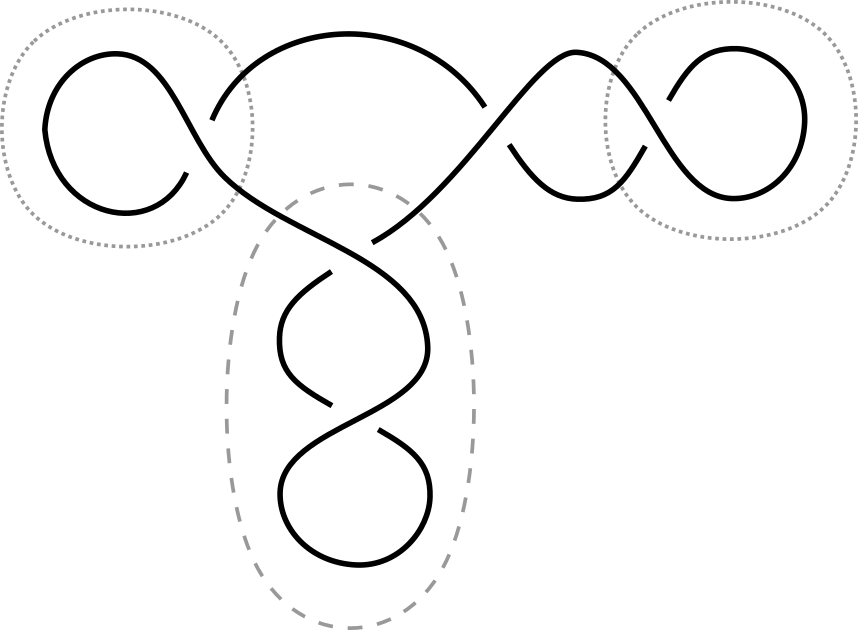}
\caption{A triangle for a periodic (un)knot, fitting in case $3b)$.}
\label{fig:triangoloperiodico5}
\end{figure}
In all the non-normal cases above two diagrams are identified, and this implies either the existence of a periodicity for the knot, or that the moves happen on the same edge, involving adjacent curls or tentacles.

\end{rmk}

\begin{lemma}\label{lemma:miniprop}
Given any knot diagram $D$, there exists an arc on $D$ such that performing either a $\Ru^+$ or a $\Rs^+$ belonging to a normal triangle, the resulting diagrams are non-periodic. Moreover, if we perform another $\Rs^+$ on the top of the tentacle created, the diagram obtained and all of the diagrams in its radius $1$ ball are non-periodic.
\end{lemma}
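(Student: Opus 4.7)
My plan is to select a single arc $a$ of $D$ with respect to which: (i) no non-trivial symmetry of $D$ fixes $a$ setwise, and (ii) the local combinatorial neighbourhood of the features to be added at $a$ does not coincide with any preexisting local picture at a $1$-region or tentacle of $D$. With such an $a$, every would-be periodicity of the enlarged diagrams will reduce to one of these two forbidden situations. Concretely, let $G$ be the finite group of orientation-preserving self-diffeomorphisms of $S^2$ preserving $D$. Each non-trivial $\phi \in G$ is topologically a rotation of $S^2$ with exactly two fixed points, so an arc can have non-trivial $G$-stabilizer only if some involution in $G$ fixes an interior point of it, giving at most $2(|G|-1)$ arcs with non-trivial $G$-stabilizer. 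Since $D$ has $\alpha(D) = 2cr(D)$ arcs, I pick $a$ with trivial $G$-stabilizer and refine the choice so that the signed local picture around $a$ does not match the neighbourhood of any preexisting $1$-region or tentacle in $D$, exploiting the two possible signs of the added curl when necessary. Degenerate diagrams with very few arcs will be handled by direct inspection.

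Now perform a $\Ru^+$ or $\Rs^+$ at $a$ and let $D_1$ be the result. Suppose $D_1$ admits a periodicity $\phi$ of order $p \geq 2$; then $\phi$ permutes the $1$-regions of $D_1$, so it either fixes the newly created $1$-region (or the pair of $1$-regions forming the new height-$1$ tentacle) or sends it to a preexisting one. In the first case, after contracting a small disk around the added feature, $\phi$ descends to a non-trivial periodicity of $D$ that fixes $a$ setwise, contradicting~(i). In the second case, a preexisting $1$-region (or height-$1$ tentacle) of $D$ must be adjacent to an arc of the same local type as $a$ plus the added feature, contradicting~(ii). Thus $D_1$ is non-periodic in both the $\Ru^+$ and $\Rs^+$ cases; the $\Rs^+$ case is in fact strictly more robust since a height-$1$ tentacle is a more rigid feature than a single curl.

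For the second part, let $D_2$ denote the diagram obtained by a further $\Rs^+$ on top of the tentacle in $D_1$, so that $D_2$ carries a height-$2$ tentacle at $a$. The same two-case argument, now with the distinguished feature being the height-$2$ tentacle itself, shows that $D_2$ is non-periodic; I additionally require that $a$ be chosen so that $D$ contains no preexisting height-$2$ tentacle, a natural strengthening of~(ii). Every element of the radius-$1$ ball of $D_2$ is obtained by a single Reidemeister move: a move performed away from the tentacle leaves the height-$2$ configuration at $a$ intact and does not create a competing height-$2$ tentacle elsewhere (since $\Omega_i^+$ moves can produce curls or height-$1$ tentacles but not taller ones in a single step), so the argument for $D_2$ applies verbatim; a move acting on the tentacle itself either reduces it to a still-distinctive configuration (height-$1$ tentacle or single curl at $a$) handled by the first-claim argument, or is an $\Omega_3$ that modifies the local picture at $a$ in a way the same dichotomy excludes. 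The classification from Theorem \ref{prop:lung3} together with the discussion in Remark \ref{rmk:triangolinormali} makes this enumeration finite and explicit.

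The main obstacle will be precisely this final case analysis for the radius-$1$ ball of $D_2$: checking, across every Reidemeister move available near a height-$2$ tentacle, that the surviving local feature at $a$ is distinctive enough to defeat any attempted periodicity via the two-case dichotomy. A secondary subtlety is the handling of highly symmetric $D$ with very few arcs, where the refinement of $a$ is most constrained and one may need auxiliary ad hoc arguments.
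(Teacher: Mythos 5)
There is a genuine gap, in two places. First, your whole construction hinges on finding an arc $a$ satisfying (i) and (ii), and later on the extra requirement that $D$ contain no preexisting height-$2$ tentacle. But $D$ is an \emph{arbitrary} diagram: it may already carry many tentacles of height $2$ or more, so that last condition cannot be arranged by any choice of $a$; and even the weaker condition (ii) (that the augmented local picture at $a$ match no preexisting curl or tentacle neighbourhood) is asserted rather than proved -- for a diagram densely decorated with curls and tentacles of assorted shapes it is not clear such an arc exists, and nothing in your argument produces one. Second, and more decisively, the radius-$1$ step fails: you claim that a single $\Omega_i^+$ move performed away from $a$ ``can produce curls or height-$1$ tentacles but not taller ones in a single step'', but a single $\Omega_1^+$ or $\Omega_T^+$ performed on top of a tentacle already present in $D$ raises its height by $1$ or $2$, so neighbours of $D_2$ can perfectly well acquire competing tentacles of height $2$ (or far taller, if $D$ already had tall ones). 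Your distinguished feature is then not unique even in $D_2$, let alone in its neighbours, and the two-case dichotomy has nothing to bite on.

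The paper's proof sidesteps both problems with one global choice instead of local genericity: any symmetry of a diagram permutes the tentacle configurations of each fixed height, so a diagram possessing a \emph{unique} tentacle of strictly maximal height cannot be periodic. If $p_1(D)=0$, any arc works, since the new curl or height-$1$ tentacle is the only one; otherwise one performs the $\Omega_1^+/\Omega_T^+$ pair (with the appropriate signs) at the top of the \emph{highest} existing tentacle, so the extended tentacle becomes the unique one of maximal height. Performing one more $\Omega_T^+$ on its top makes its height exceed that of every other tentacle by a margin that a single Reidemeister move cannot close (one move can only lower the special tentacle slightly or raise some other tentacle slightly, never both), so every diagram in the radius-$1$ ball still has a unique highest tentacle and is non-periodic. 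To repair your argument you would need to replace conditions (i)--(ii) and the local case analysis by such a global maximality statement; as written, the proposal does not establish either claim of the lemma for general $D$.
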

\begin{proof}
This follows from the fact that the height $h$ tentacle configurations are permuted by any symmetry of the diagram, so if there's only one the diagram can not be periodic. So, just take any diagram $D$; if $p_1(D) =0$, then performing any $\Ru^+$ or the $\Rs^+$ it is paired with will produce a non-periodic diagram. If instead $D$ contains at least one curl, choose the one which appears on the top of the highest tentacle, and perform there the $\Ru^+$/$\Rs^+$ pair (with appropriate signs). Since the new diagram will have only one tentacle of maximal height we can conclude. Finally, if we further increase the length of the tentacle, we are sure that we are at least at distance $2$ from any periodic diagram.
\end{proof}

Unlike the Gordian graph, the Reidemeister graphs are locally finite, even though the valence is not uniformly bounded (Remark \ref{rmk:crossingnumber}). The first invariant we can extract from them is in some sense a measure of the minimal complexity of the diagrams of $K$:
\begin{defi}\label{defi:complessita}
Let $v(D)$ denote the valence of the vertex $D$. The \emph{diagram complexity} of a knot $K$ is $$\delta(K) = \min_{D \in \diag} v(D) .$$
If $v(D) = \delta(K)$ we say that $D$ is a \emph{minimum}. We also define $\# \delta (K)$ as the number of minima of $\reid$; if a knot type $K$ is such that $\#\delta(K) = 1$, we call $K$ \emph{simple}. Both $\delta$ and $\#\delta$ are $\N$-valued knot invariants. There is of course an identical definition for $\sfera$; we denote by $\delta_S$ and $\#\delta_S$ the corresponding invariants.
\end{defi}
\begin{rmk}\label{rmk:crossingnumber}
We will postpone the proof that $\#\delta(K)$ is in fact well defined to Lemma \ref{cor:welldefdelta}. Note that the diagram complexity is not a function of the crossing number, as one might naively think. In Remark \ref{rmk:diagrammanonmin} we are going to provide some examples of this phenomenon. It is however true that, for a fixed knot type $K$, the valence becomes arbitrarily high as the crossing number of the diagrams representing $K$ increases.
\end{rmk}

Given a non-periodic diagram $D \in \diag$, one can enumerate the possible Reidemeister moves on $D$, in order to compute $v(D)$. We start by counting the possible number of $\Omega_1^+$ and $\Omega_T^+$. For each arc in $D$ we can perform $4$ $\Omega_1^+$ moves, as shown in Figure \ref{fig:possibilir1}, and the same holds for $\Omega_T^+$.
\begin{figure}[h!]
\includegraphics[width=8cm]{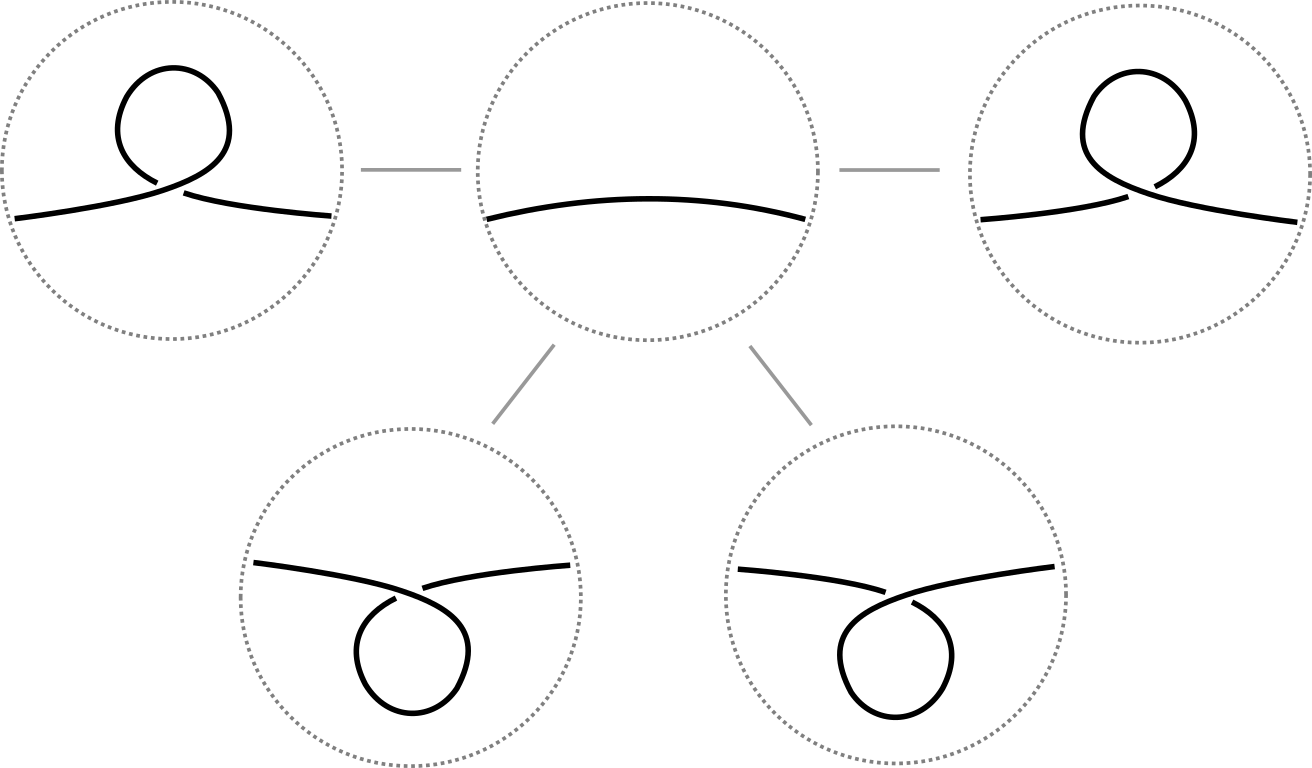}
\caption{The possible $\Omega_1^+$ moves that can be performed on each arc.}
\label{fig:possibilir1}
\vspace{0.5cm}
\includegraphics[width=10cm]{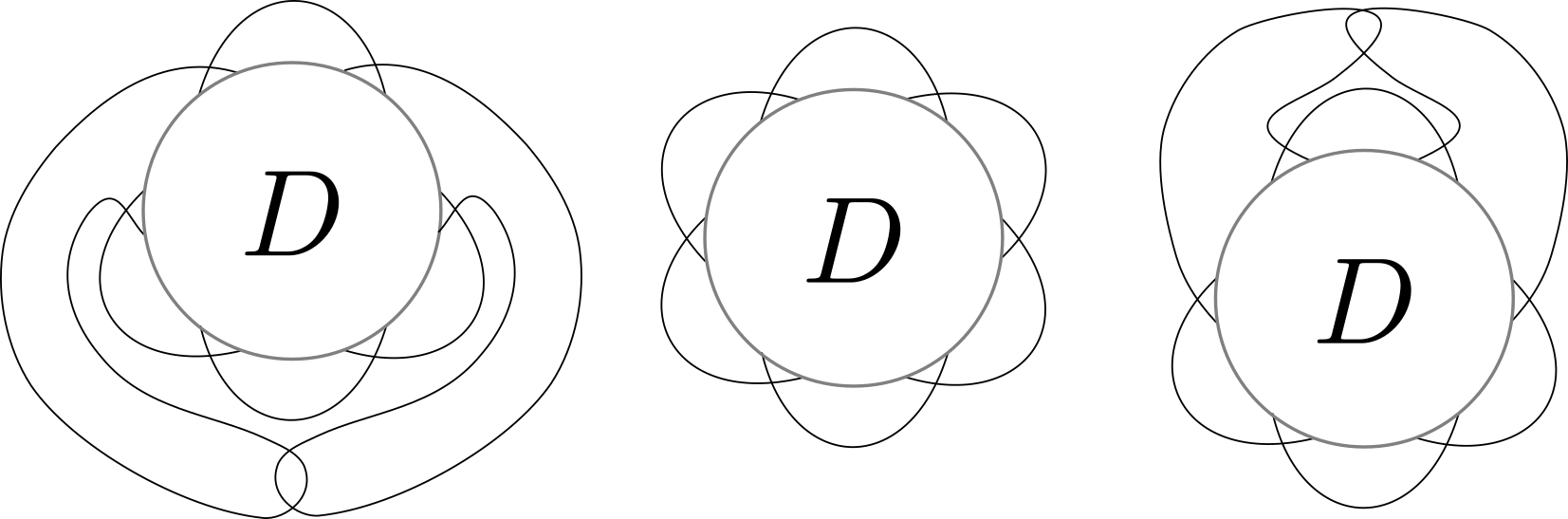}
\caption{The two non equivalent possibilities for a $\Omega_2$ move in the external zone.}
\label{fig:zonaext}
\end{figure}

When working with $\reid$, so diagrams on the plane, we must put a bit of care in counting $\Omega_2^+$ moves, since the number of such possible moves depends on whether we are in the ``external'' polygon or not.
If a polygon $P \in \R^2 \setminus D$ has $k$ edges, there are $2 \binom{k}{2} = k (k-1)$ possible\footnote{In the present discussion we find it convenient to blur a bit the distinction between $\Rd$ and $\Rs$, since we are only interested in the total count.} $\Omega_2^+$ moves we can perform in it (the factor of 2 comes from the two possible choices of which arc passes over the other).
In the external zone however we need to double the previous quantity, since there are two cases to be considered, as shown in Figure \ref{fig:zonaext}. So if we denote by $k_{ext}$ the number of edges of the external zone, we have an extra contribution of $k_{ext} (k_{ext} - 1)$. This extra term does not appear when working with diagrams on the $2$-sphere, as there is no preferential polygon.

Adding all up, we end with this rather unpleasant equation for the valence of a non-periodic planar diagram. Note that multi-edges do not create issues in the sum, as they are counted separately.

\begin{equation}\label{eqn:valenza}
\begin{aligned}
v(D) = 8\alpha(D) + \sum_{k \ge 2} p_k(D) k (k-1) + k_{ext}(k_{ext}-1) +\\ + \#\Omega_3(D) + \#\Omega_2^-(D) + \#\Omega_T^-(D) + \#\Omega_1^-(D)
\end{aligned}
\end{equation}

It follows from Equation \eqref{eqn:valenza} that the valence of any diagram is bounded from above by quantities depending only on the knot projection:
\begin{equation}\label{eqn:boundvalenza}
v(D) \le 8 \alpha(D) + p_1(D) + p_2(D) + p_3(D) + 2 \sum_{k\ge2} p_k (D) k (k-1).
\end{equation}
Equation \eqref{eqn:boundvalenza} is obtained by giving an upper bound on the possible $\Omega^-$ and $\Omega_3$ moves in terms of the number of edges of the regions interested by the moves (\emph{i.e.} on the number of $1$, $2$ and $3$-regions for $\Omega_1^-$, $\Omega_2^-$+$\Omega_T^-$ and $\Omega_3$ moves respectively).

Looking at Equation \eqref{eqn:valenza} we can obtain a lower bound as well, which allows to say that the valence grows at least linearly with the crossing number. Define $P(D)$, the maximal period of a non-trivial diagram $D$, as the maximal order of a finite group acting on the sphere (or the plane), preserving the diagram setwise\footnote{We need to exclude the trivial diagram of the unknot to ensure that $P(D)$ is in fact finite.}. Recall that if $K$ is not the unknot, then $K$ admits finitely many orders of periodicity (see \cite[Thm. 3]{period}).
\begin{lemma}\label{cor:lowerperiodico}
If $D$ is a non-trivial knot diagram with periodicity $P(D)$ (where $P(D)=1$ if $D$ is non-periodic) then
\begin{equation}\label{eqn:lowerboundlinear}
v(D) \ge \frac{8\alpha(D)}{P(D)}.
\end{equation}
\end{lemma}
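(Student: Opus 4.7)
The plan is to lower-bound $v(D)$ by counting only the neighbours of $D$ reached through the \emph{adding} moves $\Ru^+$ and $\Rs^+$, and then to control how many of these coincide in the graph because of the symmetries of $D$.

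The non-periodic case $P(D)=1$ is immediate from Equation \eqref{eqn:valenza}: every summand on the right-hand side is non-negative, and the leading one is already $8\alpha(D)$, so $v(D)\ge 8\alpha(D)=8\alpha(D)/P(D)$.

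In the periodic case $P(D)>1$, I would let $G$ be a finite group of orientation-preserving diffeomorphisms of $S^2$ (resp.~$\R^2$) that preserves $D$ setwise and realises the maximal period, so that $|G|=P(D)$; finiteness is guaranteed by \cite[Thm.~3]{period} together with the non-triviality of $D$. As recorded just before Equation \eqref{eqn:valenza}, each of the $\alpha(D)$ arcs supports exactly $4$ distinct $\Ru^+$ moves and $4$ distinct $\Rs^+$ moves, so the set $\mathcal{M}$ of adding moves has $|\mathcal{M}|=8\alpha(D)$. The group $G$ acts on $\mathcal{M}$ by transporting the arc/side/sign data, and by orbit-stabiliser each $G$-orbit has size dividing $|G|$. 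Hence the number of orbits satisfies $|\mathcal{M}/G|\ge 8\alpha(D)/P(D)$.

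The decisive step is to identify $\mathcal{M}/G$ with the set of adding-move neighbours of $D$ in the $\mathcal{R}$-graph. Moves in the same orbit clearly give the same neighbour, since $(g\cdot m)(D)=g(m(D))$ and $g$ is itself an ambient isotopy. The converse requires more care: given $m_1,m_2\in\mathcal{M}$ with $m_1(D)\cong m_2(D)$, one must show that the realising isotopy can be chosen to send the small local disc containing the new crossing(s) of $m_1$ to the corresponding disc of $m_2$, so that it restricts to a symmetry of $D$ carrying $m_1$ to $m_2$. I expect this to be the main obstacle: when $D$ already contains $1$-regions or tentacle configurations, several small discs of the same local shape coexist in $m_i(D)$, and the ``newly added'' one has to be isolated by a recursive argument in the spirit of the proof of Theorem \ref{prop:lung3} (cf.\ Figure \ref{fig:tentacoliappearing}). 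Once this bijection is established, we conclude $v(D)\ge|\mathcal{M}/G|\ge 8\alpha(D)/P(D)$, as required.
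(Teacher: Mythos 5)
Your overall counting strategy (the $8\alpha(D)$ adding moves $\Ru^+$, $\Rs^+$, cut down by a symmetry group of order at most $P(D)$) is the same idea as the paper's one-line argument that each fundamental domain of the periodic action contains an arc, and your non-periodic case via Equation \eqref{eqn:valenza} is fine. However, the step you yourself flag as decisive is not merely delicate: as formulated it is false. You want a bijection between $\mathcal{M}/G$ and the set of adding-move \emph{neighbours} of $D$, and for injectivity you would have to show that $m_1(D)\cong m_2(D)$ forces $m_1$ and $m_2$ to be related by a symmetry of $D$. The paper's own multi-edge example (top of Figure \ref{fig:multiedge1}) contradicts this: on a diagram with $n$ identical curls on an arc and no symmetry whatsoever ($P(D)=1$), there are $n+1$ distinct $\Ru^+$ moves all yielding the same diagram, and no isotopy preserving $D$ carries one move site to another. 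So distinct $G$-orbits can land on the same neighbour, the chain $v(D)\ge\#\{\text{neighbours}\}\ge|\mathcal{M}/G|$ breaks at the second inequality, and no recursive argument in the spirit of Theorem \ref{prop:lung3} can repair a false statement.

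The fix is to compare orbits with \emph{edges} rather than with adjacent vertices, which is what the paper implicitly does. The valence counts multi-edges separately (see the remark following the definition of the graphs and the comment before Equation \eqref{eqn:valenza}), and by the very definition of the edge set two moves determine the same edge precisely when they coincide up to an isotopy of $S^2$ (resp.\ of the plane) taking $D$ to itself; such an identification is a symmetry of $D$, and the group of these symmetries has order at most $P(D)$ by the maximality in the definition of $P(D)$. Hence the $8\alpha(D)$ adding moves fall into identification classes of size at most $P(D)$, each class contributing one edge, which gives $v(D)\ge 8\alpha(D)/P(D)$ directly --- the curl example above does no harm, because the $n+1$ moves there give $n+1$ distinct edges of a multi-edge. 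With ``same neighbour'' replaced by ``same edge'' and the isotopic-diagrams converse discarded, your orbit count goes through; note only that the identifications are governed by the full (finite, realisable) symmetry group of $D$, whose order is bounded by $P(D)$, rather than by one chosen group $G$ realising the maximal period.
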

This follows easily by observing that each fundamental domain for the periodic action must contain at least one arc.

Of course if $D$ is non-periodic, the lower bound
\begin{equation}\label{bounddasopra}
v(D) \ge 8\alpha(D) + \sum_{k \ge 2} p_k(D) k (k-1) + k_{ext}(k_{ext}-1)
\end{equation}
holds as well.

\begin{figure}[h]
\includegraphics[width=12cm]{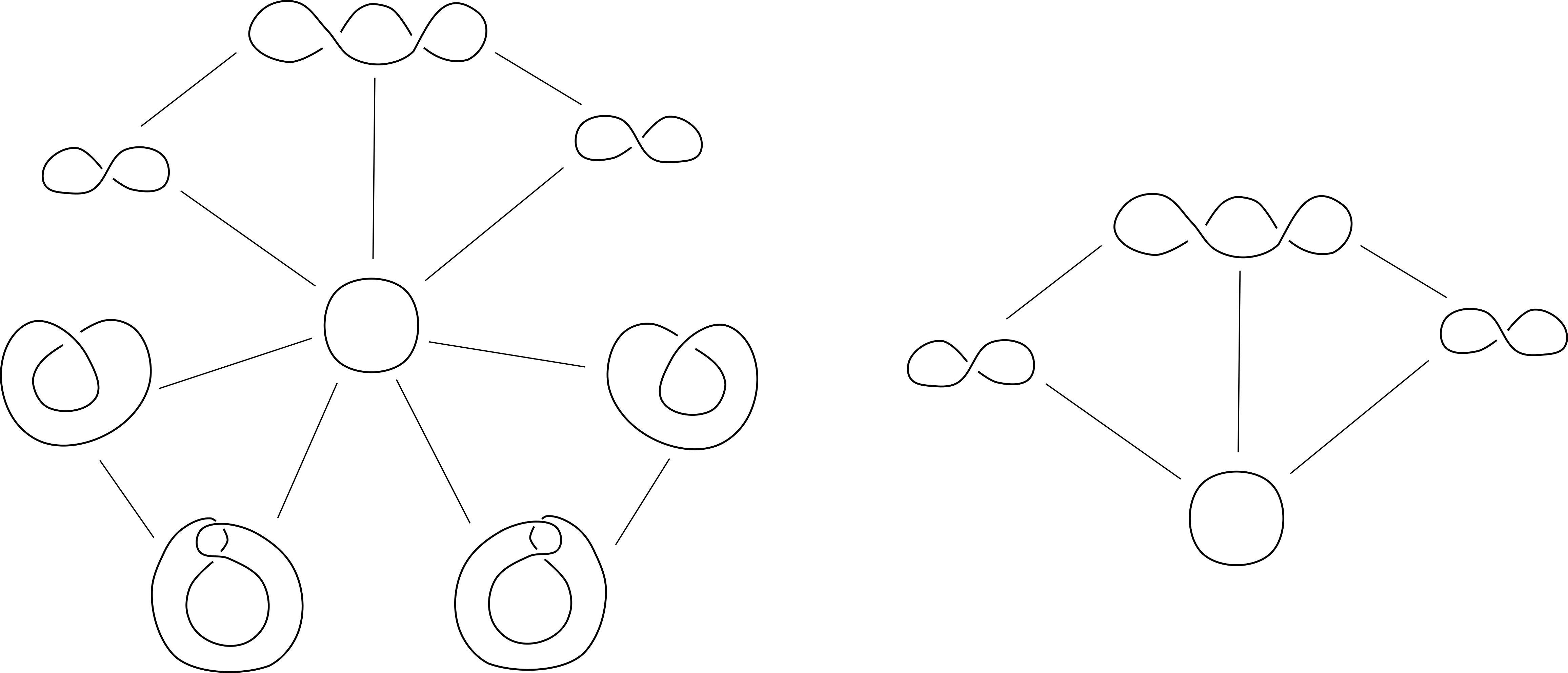}
\caption{The ball $S(\bigcirc)$ in the planar (left) and $S^2$ graphs (right).}
\label{unknot}
\end{figure}

\begin{prop}\label{valenzabanale}
The minimal valence $\delta_S $ detects the unknot $\bigcirc$.
\end{prop}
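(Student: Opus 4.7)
The plan is to compute $\delta_S(\bigcirc)$ directly from Figure \ref{unknot} and then show that every diagram with at least one crossing has strictly larger valence. Together these two facts establish both that the crossingless diagram realises $\delta_S(\bigcirc)$ and that $\delta_S(K) > \delta_S(\bigcirc)$ for every $K \neq \bigcirc$, which is precisely what ``$\delta_S$ detects $\bigcirc$'' means.

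For the first step, inspection of the right-hand side of Figure \ref{unknot} gives $v(\bigcirc) = c_0$ for some small explicit constant. Indeed, on the crossingless circle the only available Reidemeister moves are $\Omega_1^+$ (producing an unknot with a single curl of either sign) and self-$\Omega_T^+$ (producing a height-$1$ tentacle), and the fact that the two disks bounded by a round circle on $S^2$ are interchanged by an ambient isotopy collapses most of these moves to the same equivalence class. This shows $\delta_S(\bigcirc) \le c_0$.

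For the second step, let $D$ be any diagram with $cr(D)\geq 1$, so that $\alpha(D) \ge 2$. Lemma \ref{cor:lowerperiodico} yields
\begin{equation*}
v(D) \;\ge\; \frac{8\alpha(D)}{P(D)} \;\ge\; \frac{16}{P(D)}.
\end{equation*}
Any finite group of self-diffeomorphisms of $S^2$ preserving $D$ acts faithfully on the non-empty finite set of crossings, and by the classification of finite subgroups of $\mathrm{Diff}(S^2)$ such a group is either cyclic, dihedral, or polyhedral. In the cyclic/dihedral case the order is at most $2\,cr(D) = \alpha(D)$, so $v(D) \ge 8 > c_0$; in the polyhedral case the group has several orbits of crossings, forcing $cr(D)$ to be large enough that the same bound $v(D) \ge 8$ holds.

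The main obstacle is the handful of small-$cr(D)$ cases where the above estimate is tight or delicate; these are settled by direct enumeration via Equation \eqref{eqn:valenza}. For instance, a one-crossing diagram of the unknot is a single-curl configuration which, in addition to the many $\Omega_1^+$ and $\Omega_T^+$ edges accounted for above, admits at least one $\Omega_1^-$ edge (undoing the curl), a contribution that is \emph{not} present at $\bigcirc$; the two-crossing cases (two-curl unknot, height-$1$ tentacle) are handled identically. In every case the valence comfortably exceeds $c_0$, closing the argument.
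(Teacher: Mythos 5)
Your overall strategy is the same as the paper's (read off the unknot's minimal valence from Figure \ref{unknot}, then give a uniform lower bound for the valence of every diagram with crossings), but the second step, which is the real content, is not actually established. You route the bound through controlling $P(D)$ in Lemma \ref{cor:lowerperiodico} by group theory, and each link in that chain is asserted rather than proved: the claim that the symmetry group acts \emph{faithfully} on the set of crossings is not justified (an element could a priori fix every crossing while permuting arcs, e.g.\ swapping the two edges of a bigon); the claim that a cyclic or dihedral group preserving $D$ has order at most $2\,cr(D)$ needs an argument about crossings lying on rotation axes; the trichotomy ``cyclic, dihedral or polyhedral'' only covers orientation-preserving groups, whereas the paper's $P(D)$ is defined via arbitrary finite groups preserving the diagram; and the polyhedral case is dismissed with ``forcing $cr(D)$ to be large enough,'' which is not an argument. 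On top of this, you never pin down the constant $c_0$: the inequalities ``$8 > c_0$'' and ``comfortably exceeds $c_0$'' are made against an unspecified number, while the whole point of the proposition is the explicit comparison $\delta_S(\bigcirc)=3$ versus $v(D)\ge 4$.

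The paper's proof shows that no bound on $P(D)$ is needed at all: as in the proof of Lemma \ref{cor:lowerperiodico}, every fundamental domain of a periodic action contains at least one arc, and on any arc one can perform at least four moves (two $\Omega_1^+$ and two $\Omega_T^+$), so every diagram of a non-trivial knot has $v(D)\ge 4 > 3 = \delta_S(\bigcirc)$. If you want to salvage your version, replace the group-theoretic detour by this counting of moves per arc (or per arc-orbit); your final paragraph about one- and two-crossing diagrams is then also superfluous, since such diagrams represent the unknot and are irrelevant to bounding $\delta_S(K)$ for $K\neq\bigcirc$.
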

\begin{proof}
$\delta_{S}(\bigcirc)=3$, as shown in Figure \ref{unknot}, while if $K\neq \bigcirc$, then for every diagram $D$ representing $K$ we have $v(D) \geq 4$, since each fundamental domain for a periodic action must contain at least one arc (as in the proof of Lemma \ref{cor:lowerperiodico}), and for every arc there are at least $4$ (two $\Omega_T^+$ and two $\Omega_1^+$) possible moves.
\end{proof}

\begin{lemma}\label{lemma:pochi}
For each knot $K$, the number of vertices in $\reid$ or $\sfera$ whose valence is bounded by a constant is finite.
\end{lemma}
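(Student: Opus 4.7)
The plan is to combine Lemma \ref{cor:lowerperiodico} with the finiteness of the set of orders of periodicity of a non-trivial knot. Following Remark \ref{tuttotrannebanale}, I restrict to non-trivial $K$, so that \cite[Thm. 3]{period} gives an integer $P_K$ bounding all orders of periodicity of $K$. Any non-trivial finite-order orientation-preserving diffeomorphism of the ambient surface preserving a diagram $D \in \diag$ setwise extends (after identifying the ambient surface with $S^2 \subset S^3$ and extending by the identity in the normal direction) to a self-diffeomorphism of $S^3$ preserving $K$ setwise; thus $P(D) \le P_K$ for every non-trivial $D \in \diag$. Substituting into \eqref{eqn:lowerboundlinear} yields
$$v(D) \ge \frac{8 \alpha(D)}{P_K},$$
and so a bound $v(D) \le C$ forces $\alpha(D)$, and hence $cr(D) = \alpha(D)/2$, to be bounded by a constant depending only on $K$ and $C$.

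It then remains to observe that for each fixed $N \in \N$, only finitely many isotopy classes of diagrams have crossing number at most $N$. This is a standard combinatorial fact: a diagram with $n$ crossings is determined by a connected 4-valent planar graph with $n$ vertices (the underlying projection, considered up to isotopy of the ambient surface, with the additional finite choice of an unbounded face in the $\R^2$ case) together with a binary over/under assignment at each vertex. Each of these pieces of data ranges over a finite set as $n \le N$, so the conclusion follows.

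The main conceptual step is the inequality $P(D) \le P_K$, which is what ties the periodic lower bound in Lemma \ref{cor:lowerperiodico} to an intrinsic invariant of $K$; once this is in hand, the remaining finiteness is a routine combinatorial count, and combining the two observations proves the lemma.
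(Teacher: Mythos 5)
Your argument is correct and is essentially the paper's own proof: bound $P(D)$ by the (finitely many) periods of the non-trivial knot $K$ via Flapan's theorem, use the lower bound \eqref{eqn:lowerboundlinear} to turn a valence bound into a crossing-number bound, and finish with the finiteness of diagrams of bounded crossing number. The only difference is that you make explicit the step (extending a diagram symmetry to a symmetry of $(S^3,K)$) that the paper leaves implicit in its appeal to ``finitely many periods for each knot''.
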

\begin{proof}
This follows immediately from the fact that there are only finitely many diagrams of a knot with crossing number bounded by a constant, finitely many periods for each knot, and by Equation \eqref{eqn:lowerboundlinear} the valence is bounded from below by a linear function in $cr(D)$.
\end{proof}
In particular, choosing $\delta(K)$ as the constant in the previous Lemma, we get:
\begin{cor}\label{cor:welldefdelta}
$\# \delta(K)$ is well defined.
\end{cor}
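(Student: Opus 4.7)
The plan is to invoke Lemma \ref{lemma:pochi} with the bounding constant taken to be $\delta(K)$ itself. First I would observe that $\delta(K)$ is well-defined: the set of valences $\{v(D) : D \in \diag\}$ is a non-empty subset of $\N$ (since $\diag \neq \emptyset$ by Reidemeister's theorem), and hence admits a minimum by the well-ordering of $\N$. By construction this minimum is attained, so the set of minima is non-empty.

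Next, I would apply Lemma \ref{lemma:pochi} with the constant set equal to this $\delta(K)$, which yields that the set $\{D \in \diag : v(D) \le \delta(K)\}$ is finite. Since every minimum satisfies $v(D) = \delta(K)$ by definition, the set of minima is a subset of this finite set; therefore $\#\delta(K)$ is a well-defined positive integer. The identical reasoning works verbatim for the $S^2$-Reidemeister graph, so $\#\delta_S(K)$ is also well-defined.

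There is no real obstacle at the level of this corollary: all the content has already been packaged into Lemma \ref{lemma:pochi}, whose proof rests on the linear lower bound of Equation \eqref{eqn:lowerboundlinear} (via Lemma \ref{cor:lowerperiodico}) together with the finiteness of periods of a non-trivial knot. The lower bound forces the crossing number of any potential minimum to lie below an explicit constant, after which finiteness follows because there are only finitely many knot diagrams with bounded crossing number.
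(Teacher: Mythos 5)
Your argument is exactly the paper's: the corollary is obtained by applying Lemma \ref{lemma:pochi} with the bounding constant chosen to be $\delta(K)$, so that the set of diagrams realising the minimal valence is finite (and non-empty, since the minimum over $\N$ is attained). The extra remarks on well-ordering and on the $S^2$ case are fine but add nothing beyond what the paper intends.
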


Following \cite{kauffman2006hard}, we call a diagram $D \in \diag$ \emph{hard} if  $\#\Omega_1^- (D) = \#\Omega_2^- (D) = \#\Omega_T^- (D) = \#\Omega_3 (D) = 0$.

We can refine \eqref{eqn:valenza} for hard diagrams:
\begin{cor}\label{prop:stimadalbasso}
If $D$ is a hard diagram of a non-periodic knot $K$, then $$v_{S}(D) = 8\alpha (D) + \sum_{k>1}  k(k-1)p_k (D).$$
The analogous result for $\reid$ is obtained by adding $k_{ext}(k_{ext}-1)$.
\end{cor}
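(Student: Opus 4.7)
The plan is to specialize the general valence formula \eqref{eqn:valenza} to the two hypotheses at hand. By definition of a hard diagram,
\[
\#\Omega_1^-(D) \;=\; \#\Omega_2^-(D) \;=\; \#\Omega_T^-(D) \;=\; \#\Omega_3(D) \;=\; 0,
\]
so every term in the second line of \eqref{eqn:valenza} vanishes. The non-periodicity assumption is precisely what licensed the enumeration in \eqref{eqn:valenza}: it guarantees that no nontrivial symmetry of $D$ identifies two of the enumerated $\Omega_*^+$ moves into a single edge of $\sfera$, so the count is exact rather than an overcount.

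For the $S^2$-statement I would then argue that the planar correction term $k_{ext}(k_{ext}-1)$ drops out. Recall that this term entered the derivation of \eqref{eqn:valenza} because, in $\R^2$, the unbounded region is distinguished: an $\Omega_2^+$ played ``from outside'' the diagram (right-hand side of Figure \ref{fig:zonaext}) is not planar-isotopic to its ``inside'' counterpart, and this is what doubles the contribution of the external polygon. On $S^2$ no region is distinguished, the two configurations of Figure \ref{fig:zonaext} become isotopic on the sphere, and the only $\Omega_2^+$ contribution from a $k$-region is the uniform $k(k-1)$ count already accounted for in $\sum_{k\ge 2} k(k-1) p_k(D)$. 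Putting these observations together gives
\[
v_S(D) \;=\; 8\alpha(D) + \sum_{k>1} k(k-1)\,p_k(D),
\]
and the planar assertion follows by reinstating the $k_{ext}(k_{ext}-1)$ correction.

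There is no real obstacle: the corollary is a direct specialization of Equation \eqref{eqn:valenza}. The only point that deserves a moment of care is keeping the $\R^2$ versus $S^2$ bookkeeping clean, i.e.\ confirming that the entire discrepancy between the planar and spherical formulas is concentrated in the external-polygon term, and that \textbf{hardness} genuinely eliminates every remaining summand that could contribute to $v(D)$ beyond the $\Omega_*^+$ counts. Both facts are immediate from the setup in Section \ref{local}.
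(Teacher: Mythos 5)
Your proposal is correct and is essentially the paper's own argument: the corollary is stated there as a direct refinement of Equation \eqref{eqn:valenza}, with hardness annihilating the $\#\Omega_1^-$, $\#\Omega_2^-$, $\#\Omega_T^-$, $\#\Omega_3$ terms, non-periodicity of $K$ (hence of $D$) justifying the exact enumeration, and the $k_{ext}(k_{ext}-1)$ term being exactly the planar-versus-spherical discrepancy already explained around Figure \ref{fig:zonaext}. Nothing further is needed.
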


In \cite{kauffman2006hard} Kauffman and Lambropoulou exhibit an infinite family of hard unknots. Using their result, it is not difficult to argue that every knot admits (infinitely many) hard diagrams. Take any diagram $D \in \diag$, and choose a (non-trivial) hard diagram $U$ of the unknot. If $D$ is not hard, choose a $\Omega_i^-$ or $\Omega_3$ move and perform a diagram connected sum with $U$ to ``kill it'' as in Figure \ref{kill}.
\begin{figure}[h!]
\includegraphics[width=10cm]{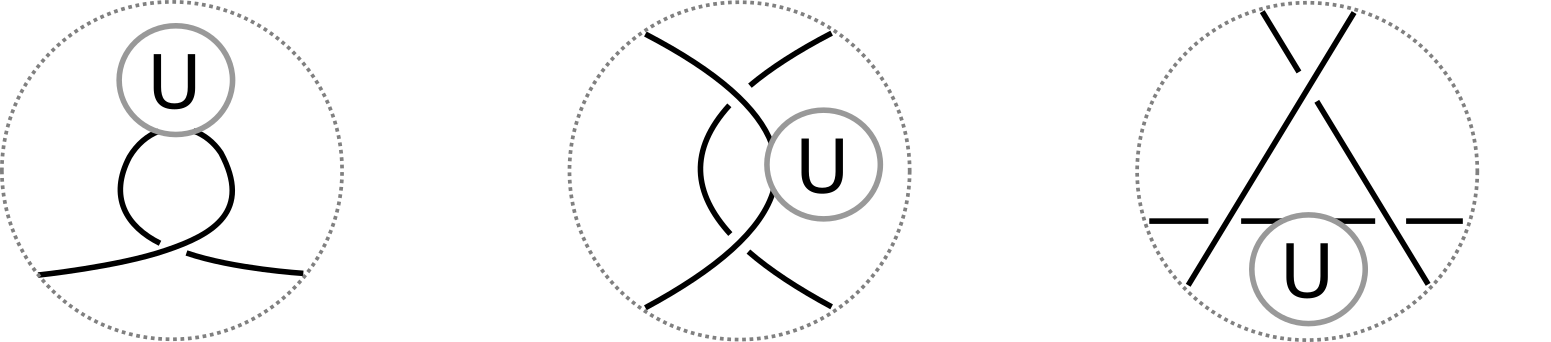}
\caption{How to kill Reidemeister moves.}
\label{kill}
\end{figure}
Generally, hard diagrams of non-periodic knots are interesting from the $\mathcal{R}$-graphs viewpoint, since for them the valence is completely determined by the knot projection, rather than by the diagram. In particular this implies that given a hard diagram, it will have minimal valence among all the diagrams obtained from it by changing any number of crossings\footnote{This is no longer true if one of the diagrams obtained by changing some crossings in a hard one is periodic.}.

\begin{rmk}\label{rmk:trifogliminimi}
It is possible to compute the valence of the two trefoil knots of Figure \ref{fig:trifogli} in $\mathcal{G} (3_1)$. Taking into account the periodicities of the two diagrams (it is of order $3$ for the first and $2$ for the other),
one gets that (as planar diagrams) the first has valence $24$ and the second $32$, so they are set apart in $\mathcal{G}(3_1)$. The valence in $\mathcal{G}_S(3_1)$ instead is $12$.
We will in fact prove in an upcoming paper that $\delta(3_1) = 24$ and $\delta_S(3_1) = 12$, and that in both cases $\#\delta(3_1) = 1$.
\end{rmk}

In order to facilitate the proof of Theorem$\;$\ref{thm:completeness}, understanding how the valence of a diagram can change under the various Reidemeister moves is crucial.

It is of course impossible to \emph{a priori} compute the difference of the valence between two vertices at distance $1$, since this value depends on the crossings and specific configurations in the diagrams involved. It is however possible to pinpoint a quite good bound by accounting for the number of edges of the regions interested by the Reidemeister move.

This last task is a quite tedious exercise; in the following we denote by\footnote{We suppress the dependency of the $\varepsilon_{j,i}$ from the diagrams in the notation for aesthetic reasons.} 
 $\varepsilon_{j,i}$ and $\varepsilon_{j,3}$ the difference in the number of Reidemeister moves of type $\Omega_i^-$ and $\Omega_3$ respectively that can be performed on two diagrams differing by a single Reidemeister move $\Omega_j^+$, with $i,j \in \{1,T,2\}$.\\
If $D^\prime = \Omega_1^+(D)$, then:
\begin{itemize}
\item $\varepsilon_{1,1} \in \{ 0,1\} $; 
\item $\varepsilon_{1,T} + \varepsilon_{1,2} \in \{-2,0,1\} $\footnote{Here we consider the sum $\varepsilon_{1,T} + \varepsilon_{1,2}$ since performing an $\Omega_1^+$ move at the top of a pre-existing tentacle may decrease the number of $\Omega_T$ moves, changing them in $\Omega_2$ moves. };
\item $\varepsilon_{1,3} \in \{-4, \ldots , 4\} $.
\end{itemize}

We denote the sum of the $\varepsilon$ contributions in each case as $\sum_i \varepsilon_{1,i}$; these count the part of the valence of a diagram that is not completely determined by the knot projection.
In particular, we have that 
\begin{equation}\label{epsilon1}
-6 \leq \sum_{i\in \{1,2,T,3\}} \varepsilon_{1,i} \leq 6.
\end{equation}

\begin{figure}[h!]
\includegraphics[width=7cm]{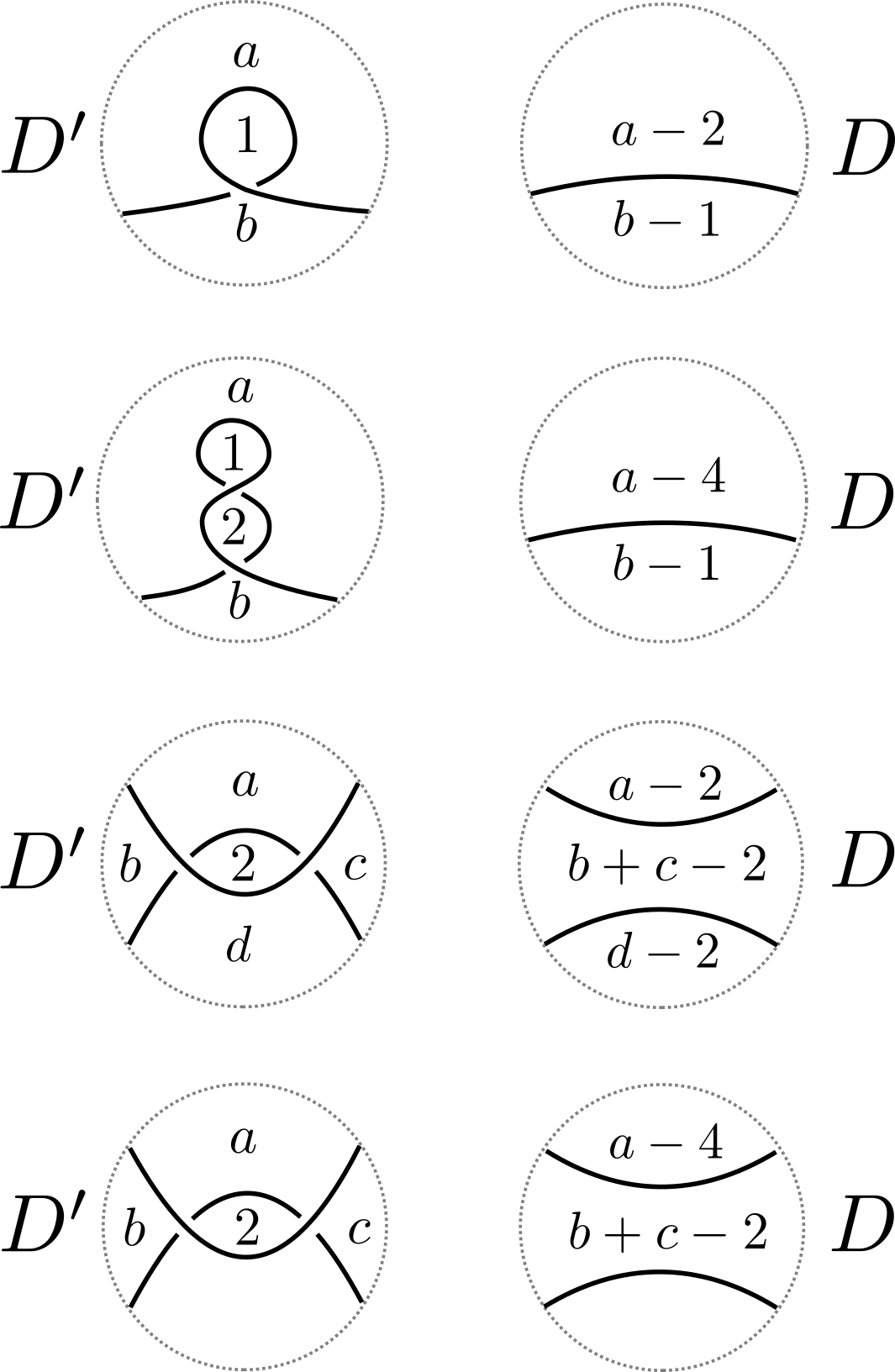}
\caption{The four possible configurations considered in Propositions \ref{prop:changer1} to \ref{prop:changer2bis}.}
\label{modificare}
\end{figure}

\begin{prop}\label{prop:changer1}
If $K$ is a non-periodic knot and $D^\prime \in \diag$ is obtained from $D$ by adding a curl (\emph{i.e.}$\;$performing a $\Omega_1^+$ move, as in the upper part of Figure \ref{modificare}) then $v(D^\prime) > v(D)$. More precisely, if the move is \emph{internal}, that is the two zones involved are not the external one, then:
\begin{equation}\label{eqn:r1}
v(D^\prime) = v(D) + 8 + 4a+2b + \sum_j \varepsilon_{1,j}.
\end{equation}
If the zone with $a$ edges is external:
\begin{equation}\label{eqn:r1ee}
v(D^\prime) = v(D) + 2 + 8a+2b + \sum_j \varepsilon_{1,j}.
\end{equation}
And finally if the zone with $b$ edges is external:
\begin{equation}\label{eqn:r1e}
v(D^\prime) = v(D) + 6 + 4a+4b + \sum_j \varepsilon_{1,j}.
\end{equation}
Moreover, we have $ \sum_j \varepsilon_{1,j} \in \{-6,\ldots,6\}$. Thus, performing an $\Omega_1^+$ move always increases the valence.

\end{prop}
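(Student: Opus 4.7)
The plan is to track the change of each summand in Equation \eqref{eqn:valenza} under a single $\Omega_1^+$ move, separating the contributions determined by the knot projection from those that depend on crossing information (and are bundled into $\sum_j \varepsilon_{1,j}$).

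For the deterministic part I would first note that adding a curl produces exactly one new crossing (so two new arcs, determining $\Delta(8\alpha)$) and modifies the local face structure at the new crossing. Identifying the region $A$ (with $a$ edges) on whose side the loop lies and the opposite region $B$ (with $b$ edges), one sees that $A$ acquires two new boundary edges (the outer side of the loop and the two halves of the split arc, in place of the original arc) while $B$ acquires one new boundary edge, and a new $1$-region appears inside the loop. Plugging these into $\sum_{k\ge 2} p_k(D)\, k(k-1)$ gives the terms with coefficients $4a$ and $2b$; the $1$-region contributes zero since $1\cdot 0=0$. If $A$ or $B$ is the external polygon, the $k_{ext}(k_{ext}-1)$ term is also affected, doubling the corresponding coefficient and adjusting the constant, which yields the three cases \eqref{eqn:r1}, \eqref{eqn:r1ee}, \eqref{eqn:r1e}.

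The variable part is precisely $\sum_j \varepsilon_{1,j}$, and its range $\{-6,\ldots,6\}$ follows immediately from the already-established bounds $\varepsilon_{1,1}\in\{0,1\}$, $\varepsilon_{1,T}+\varepsilon_{1,2}\in\{-2,0,1\}$, $\varepsilon_{1,3}\in\{-4,\ldots,4\}$. For the positivity conclusion, since every region has at least one edge ($a,b\ge 1$), the deterministic constants $8+4a+2b\ge 14$, $2+8a+2b\ge 12$ and $6+4a+4b\ge 14$ are all strictly greater than $6$, hence dominate the minimum value of $\sum_j \varepsilon_{1,j}$, so $v(D') > v(D)$ in every case.

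The main obstacle is the careful local accounting of how faces and arcs change under the move---most delicately the doubled contribution of the external polygon in the two external subcases, and the correct identification of which of $A$, $B$ gains two edges versus one (together with the subtlety that $A$'s boundary may visit the new crossing twice). Once this bookkeeping is set up, the three formulas and the strict positivity follow by direct substitution.
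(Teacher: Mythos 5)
Your proposal follows the paper's proof essentially step for step: expand Equation \eqref{eqn:valenza}, record that $\alpha(D')=\alpha(D)+2$ (contributing $16$), track the change in the face vector at the new crossing, add the $k_{ext}(k_{ext}-1)$ correction in the two external subcases, bundle all $\Omega^-_*$ and $\Omega_3$ counts into $\sum_j \varepsilon_{1,j}$ with the previously stated ranges, and note that the deterministic part exceeds $6$, giving $v(D')>v(D)$. One bookkeeping point needs fixing, and it is precisely the ``careful local accounting'' you defer: in the paper's convention (the labels in Figure \ref{modificare} refer to the diagram $D'$), $a$ and $b$ are the edge counts \emph{after} the move, i.e.\ the two regions pass from $a-2$ and $b-1$ edges in $D$ to $a$ and $b$ edges in $D'$; the face-term change is then $a(a-1)-(a-2)(a-3)+b(b-1)-(b-1)(b-2)=4a+2b-8$, which together with the $16$ from the arcs yields the constant $8$ in \eqref{eqn:r1}. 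With your stated identification ($A$ has $a$ edges in $D$ and gains two, $B$ has $b$ and gains one) the same computation gives $4a+2b+2$, hence a constant of $18$, so the constants in \eqref{eqn:r1}--\eqref{eqn:r1e} would not come out as claimed; since you never actually substitute, this is a labelling slip rather than a wrong idea, but verifying those constants is the whole content of the proposition. Your treatment of the external subcases (extra $4a-6$ or $2b-2$ from the $k_{ext}$ term), of the range of $\sum_j\varepsilon_{1,j}$, and of positivity matches the paper; the paper additionally remarks that the computation is unchanged when some of the regions with $a,b,a-2,b-1$ edges coincide, which is the multiplicity subtlety you mention in passing and should be stated explicitly.
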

\begin{proof}
After performing a $\Omega_1^+$ move, the number of arcs in $D'$ increases by 2, that is $\alpha(D')=\alpha(D)+2$. Moreover, assuming that $a,b,a-2,b-1$ are pairwise distinct, we have the following changes in the $p_k$s:
\begin{itemize}
\item $p_a(D')= p_a(D)+1$
\item $p_{a-2}(D')= p_a(D)-1$
\item $p_b(D')= p_b(D)+1$
\item $p_{b-1}(D')= p_b(D)-1$
\end{itemize}

Adding all up, and keeping in mind Equation \eqref{eqn:valenza}, we obtain $$ v(D')-v(D)= 8\cdot 2 + a(a-1) -(a-2)(a-3)+$$
$$ + b(b-1)-(b-1)(b-2) +  \sum_j \varepsilon_{1,j}.$$
That is precisely $$ v(D^\prime) = v(D) + 8 + 4a+2b + \sum_j \varepsilon_{1,j}.$$
Notice that even if $a,b,a-2,b-1$ are not pairwise distinct, the same computation holds.
All other $\Omega_j^-$ moves (that do not depend solely on the knot projection) add up to $\sum_j \varepsilon_{1,j}$.\\
To obtain Equations \eqref{eqn:r1ee} and \eqref{eqn:r1e} it is enough to add the contribution of the external region, which is $a(a-1)-(a-2)(a-3)=4a-6$ in the first case, and $b(b-1)-(b-1)(b-2)=2b-2$ in the second.
\end{proof}

The proof is identical in the other cases considered below, and we are going to omit it.

\begin{prop}\label{prop:changesr2}
Let $D, D^\prime \in \diag$ be two non-periodic diagrams differing by a $\Omega_T$ creating a tentacle of length $1$ as in the upper-middle part of Figure \ref{modificare}. Then, if the zones involved are not external:
\begin{equation}\label{eqn:changesr2}
v(D^\prime) = v(D) + 12 + 8a + 2b +  \sum_j \varepsilon_{T,j}.
\end{equation}
If the zone with $a$ edges is external:
\begin{equation}
v(D^\prime) = v(D) -8 + 16a + 2b + \sum_j \varepsilon_{T,j}.
\end{equation}
And finally if the zone with $b$ edges is external:
\begin{equation}
v(D^\prime) = v(D) + 10 + 8a + 4b + \sum_j \varepsilon_{T,j}.
\end{equation}

\end{prop}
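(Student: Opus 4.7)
My plan is to imitate the proof of Proposition \ref{prop:changer1}, tracking the change in each summand of Equation \eqref{eqn:valenza} under the $\Rs^+$ move pictured in the upper-middle portion of Figure \ref{modificare}. The move introduces two new self-crossings, so $\alpha(D^\prime) = \alpha(D) + 4$, contributing $+32$ to the valence via the $8\alpha$ term.

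The next step is to read off the changes to the region vector $(p_k)_k$ from the local picture. The new tentacle produces exactly two new small regions: a $1$-region at its tip and a $2$-region (the bigon at its base). The zone $A$ into which the tentacle develops absorbs the outer boundaries of the bigon (two arcs) and of the tip curl (one arc), as well as the two external segments of $\gamma$ left outside the tentacle, for a net gain of $4$ edges; the opposite zone $B$ only sees these two external segments and gains $1$ edge. In symbols: $p_1$ and $p_2$ each increase by $1$, $p_a(D^\prime) = p_a(D)+1$, $p_{a-4}(D^\prime) = p_{a-4}(D)-1$, $p_b(D^\prime) = p_b(D)+1$, and $p_{b-1}(D^\prime) = p_{b-1}(D)-1$.

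Plugging into \eqref{eqn:valenza}, the deterministic change in valence is
\begin{equation*}
32 + \bigl[a(a-1)-(a-4)(a-5)\bigr] + \bigl[b(b-1)-(b-1)(b-2)\bigr] + 2 = 32 + (8a-20) + (2b-2) + 2 = 12 + 8a + 2b,
\end{equation*}
and the remaining moves not determined by the projection contribute exactly $\sum_j \varepsilon_{T,j}$, giving Equation \eqref{eqn:changesr2}. The two external variants follow by adding the correction $k_{ext}(k_{ext}-1)$ from the formula: when the $a$-zone is external one adds $8a-20$, producing $-8+16a+2b$; when the $b$-zone is external one adds $2b-2$, producing $10+8a+4b$.

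I expect the main obstacle to be the geometric step: justifying that the height-$1$ tentacle creates exactly one new $1$-region and one new bigon, and produces edge gains of $+4$ and $+1$ in the two adjacent zones. Once that is in place, the possible coincidences among the indices $a, a-4, b, b-1, 2, 1$ are handled by the same direct arithmetic as in Proposition \ref{prop:changer1}.
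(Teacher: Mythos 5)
Your computation is correct and is exactly the argument the paper intends: the paper omits this proof, stating it is identical to that of Proposition \ref{prop:changer1}, and your bookkeeping (two new crossings giving $\alpha(D^\prime)=\alpha(D)+4$, a new $1$-region and a new bigon, the $a$-zone gaining $4$ edges and the $b$-zone gaining $1$, then substituting into Equation \eqref{eqn:valenza}) reproduces \eqref{eqn:changesr2} and the two external variants via the extra $k_{ext}(k_{ext}-1)$ term. No gaps to report.
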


\begin{prop}\label{prop:changer2}
If two non-periodic diagrams $D, D^\prime \in \diag$ differ by a $\Omega_2$ move in which the regions with $a$ and $d$ edges do not coincide, as in the middle part of Figure \ref{modificare}, then if the move is internal:
\begin{equation}\label{eqn:changer2}
v(D^\prime) = v(D) + 16 + 4(a+b+c+d) - 2bc + \sum_j  \varepsilon_{2,j}.
\end{equation}

\end{prop}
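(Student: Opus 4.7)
The plan is to apply Equation~\eqref{eqn:valenza} separately to $D$ and $D'$ and cancel the parts that coincide away from the altered portion of the diagrams, in exactly the spirit of Propositions~\ref{prop:changer1} and~\ref{prop:changesr2}. First, since the $\Omega_2^+$ introduces two new crossings, four new arcs appear, so $\alpha(D') = \alpha(D) + 4$ and the arc-term contributes $+32$ to $v(D') - v(D)$.

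Next, I would track how the polygon profile $\{p_k\}$ changes. In $D'$ the move produces a new bigon (contributing $2\cdot 1 = 2$ to $\sum_k k(k-1) p_k$), while the $a$- and $d$-regions of Figure~\ref{modificare}, lying above and below the bigon, each acquire two additional edges from the new configuration; hence the corresponding regions of $D$ have $a-2$ and $d-2$ edges respectively. The essential observation is that the two side regions of the bigon in $D'$, with $b$ and $c$ edges, together with the bigon itself, coalesce into a single region of $D$ whose boundary is obtained by removing the two edges absorbed with the crossings, yielding $b+c-2$ edges. This is precisely where the hypothesis $a \ne d$ intervenes, ruling out an unwanted merge of $a$ and $d$; the non-periodicity assumption prevents accidental coincidences with far-away parts of the diagram.

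Substituting into the polygon sum, the $a$- and $d$-region contributions telescope to $(4a-6) + (4d-6)$, while a short algebraic manipulation shows that
\[
b(b-1) + c(c-1) + 2 - (b+c-2)(b+c-3) \;=\; 4(b+c) - 2bc - 4,
\]
the $-2bc$ being the remainder in $(b+c)^2 - (b^2+c^2)$. Summing the three chunks with the $+32$ arc-term contribution gives the desired $16 + 4(a+b+c+d) - 2bc$, with the residual $\sum_j \varepsilon_{2,j}$ absorbing the remaining contributions from $\Omega_i^-$ and $\Omega_3$ moves whose count depends on the actual crossing information rather than on the knot projection alone.

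I expect the main (though mild) obstacle to be the region-merging bookkeeping: one must verify that no region other than those explicitly tracked is affected by the move, and that the three-into-one merge of $b$, $c$, and the bigon really produces a single region with exactly $b+c-2$ edges. Once this combinatorial point is in place, everything reduces to the algebraic identity above, exactly paralleling the proofs of Propositions~\ref{prop:changer1} and~\ref{prop:changesr2}.
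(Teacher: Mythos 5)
Your computation is correct and is exactly the argument the paper intends: the paper omits this proof, declaring it identical to that of Proposition \ref{prop:changer1}, namely apply Equation \eqref{eqn:valenza} to both diagrams, record $\alpha(D')=\alpha(D)+4$ together with the region changes ($a-2\to a$, $d-2\to d$, and the $b$- and $c$-regions merging with the bigon into a single $(b+c-2)$-gon of $D$), and simplify. Two small wording points that do not affect your bookkeeping: the hypothesis is that the two regions do not coincide \emph{as regions} (not that $a\neq d$ numerically), and the real role of non-periodicity is to make Equation \eqref{eqn:valenza} valid for both $D$ and $D'$ (so that distinct moves give distinct edges), rather than to prevent far-away coincidences.
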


\begin{prop}\label{prop:changer2bis}
If two non-periodic diagrams $D, D^\prime \in \diag$ differ by a $\Omega_2$ move in which the regions with $a$ and $d$ edges coincide, as in the lower part of Figure \ref{modificare}, then if the move is internal:
\begin{equation}\label{eqn:changer2bis}
v(D^\prime) = v(D) + 8 + 4(2a+b+c) - 2bc + \sum_j \varepsilon_{2,j}.
\end{equation}

\end{prop}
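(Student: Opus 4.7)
My plan is to follow the same strategy as the proofs of Propositions \ref{prop:changer1}, \ref{prop:changesr2} and \ref{prop:changer2}: apply Equation \eqref{eqn:valenza} to both $D$ and $D'$, account for the change in $\alpha$ and in the polygon sizes $p_k$, and collect every contribution not determined by the projection into the term $\sum_j \varepsilon_{2,j}$. Since the move is internal, the external-region term $k_{ext}(k_{ext}-1)$ is unchanged and can be ignored. The easy input is that an $\Omega_2^+$ creates two new crossings, hence four new arcs, so $\alpha(D') - \alpha(D) = 4$, contributing $+32$ to the valence difference.

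The substantive step is to understand how the region structure changes in the bis configuration, where the regions labelled $A$ and $D$ in the non-bis case are now the same global region $R$. This means that the local boundary of $R$ in $D$ consists of \emph{both} of the strand segments involved in the move (contributing $2$ local edges), rather than one strand per region. After the move, $R$ (of size $a$ in $D'$) is bounded locally by six arcs, three on each side of the dip; hence it had $a - 4$ edges in $D$. The middle region $M$ splits in $D'$ into $B$ (size $b$), $C$ (size $c$) and a new bigon of size $2$; an analogous local/global edge bookkeeping shows $M$ had $b + c - 2$ edges in $D$.

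From here the computation is mechanical. The contributions to $\sum_k k(k-1) p_k$ are $a(a-1) - (a-4)(a-5) = 8a - 20$ from the modified region $R$, then $-(b+c-2)(b+c-3)$ from the disappearance of $M$, and finally $b(b-1) + c(c-1) + 2$ from the appearance of $B$, $C$ and the bigon. After expanding, this collapses to $8a + 4b + 4c - 2bc - 24$; combining with $+32$ from the arc count and the residual $\sum_j \varepsilon_{2,j}$ gives $v(D')-v(D) = 8 + 4(2a + b + c) - 2bc + \sum_j \varepsilon_{2,j}$ as claimed. The only subtle point I expect is the local edge count in the bis case: one must recognise that the merged region contributes twice as much locally, both before and after the move, compared to each of $A$ and $D$ in the non-bis case. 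As a sanity check, formally substituting $d = a$ into the formula of Proposition \ref{prop:changer2} differs from the claimed one by exactly $-8$, which is precisely the correction recording that the two regions are actually one, giving confidence in the bookkeeping.
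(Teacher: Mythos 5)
Your computation is correct and is exactly the argument the paper intends: the paper omits this proof as "identical" to that of Proposition \ref{prop:changer1}, namely applying Equation \eqref{eqn:valenza} to $D$ and $D'$, tracking $\alpha(D')-\alpha(D)=4$ and the changes in the $p_k$'s (the merged region going from $a-4$ to $a$ edges, the middle region of size $b+c-2$ splitting into the $b$- and $c$-regions plus a bigon), with the remaining moves absorbed into $\sum_j \varepsilon_{2,j}$. Your bookkeeping, including the $+4$ local edge gain of the single outer region, reproduces Equation \eqref{eqn:changer2bis} exactly.
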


\begin{rmk}
A $\Omega_T$ creating a tentacle of length greater than $1$ is a special case of \ref{prop:changer2bis} in which $c=2$. Thus, in this case we obtain $$v(D^\prime) = v(D) + 8 + 4(2a+b+2) - 4b + \sum_j \varepsilon_i=$$ $$=  v(D) + 16 +8a +\sum_j \varepsilon_{T,j}.$$
\end{rmk}

It is worth to remark that, when dealing with $\sfera$, the change of the valence is determined by Equations \eqref{eqn:r1}, \eqref{eqn:changesr2} and \eqref{eqn:changer2} in the respective cases.

\begin{rmk}\label{rmk:valenzapositiva}
We will find useful to divide the valence of every vertex in two parts, namely the positive valence $v^+(D)$ and the negative valence $v^-(D)$. The positive valence is defined as the number of edges emanating from $D$ which correspond to $\Omega_*^+$ moves, where $* \in \{1,2,T\}$. Note that $v^+(D)$ only depends on the projection of $D$.
If we wish to consider only the positive valence, Equations \eqref{eqn:r1}, \eqref{eqn:changesr2}, \eqref{eqn:changer2} and \eqref{eqn:changer2bis} can be rewritten as:
\begin{equation}\label{eqn:r1+}
v^+(D^\prime) = v^+(D) + 8 + 4a+2b
\end{equation}
\begin{equation}\label{eqn:changesr2+}
v^+(D^\prime) = v^+(D) + 12 + 8a + 2b
\end{equation}
\begin{equation}\label{eqn:changer2+}
v^+(D^\prime) = v^+(D) + 16 + 4(a+b+c+d) - 2bc
\end{equation}
\begin{equation}\label{eqn:changer2bis+}
v^+(D^\prime) = v^+(D) + 8 + 4(2a+b+c) - 2bc
\end{equation}
\end{rmk}

\begin{rmk}\label{rmk:diagrammanonmin}
Proposition \ref{prop:changesr2} suggests how to produce examples of knots in which the minimal complexity is not realised by a diagram minimising the crossing number. From Equation \eqref{eqn:changer2} it is apparent that if $b$ and$\slash$or $c$ are sufficiently big, then the diagram $D^\prime$ (with higher crossing number than $D$) obtained by performing a $\Omega_2^+$ move, will have a lower valence. An easy example of this phenomenon is given in Figure \ref{fig:nodoesempio}. This is the twist knot with $17$ crossings; note that the example shown is also alternating, reduced and non-periodic.
\begin{figure}[h!]
\includegraphics[width=5cm]{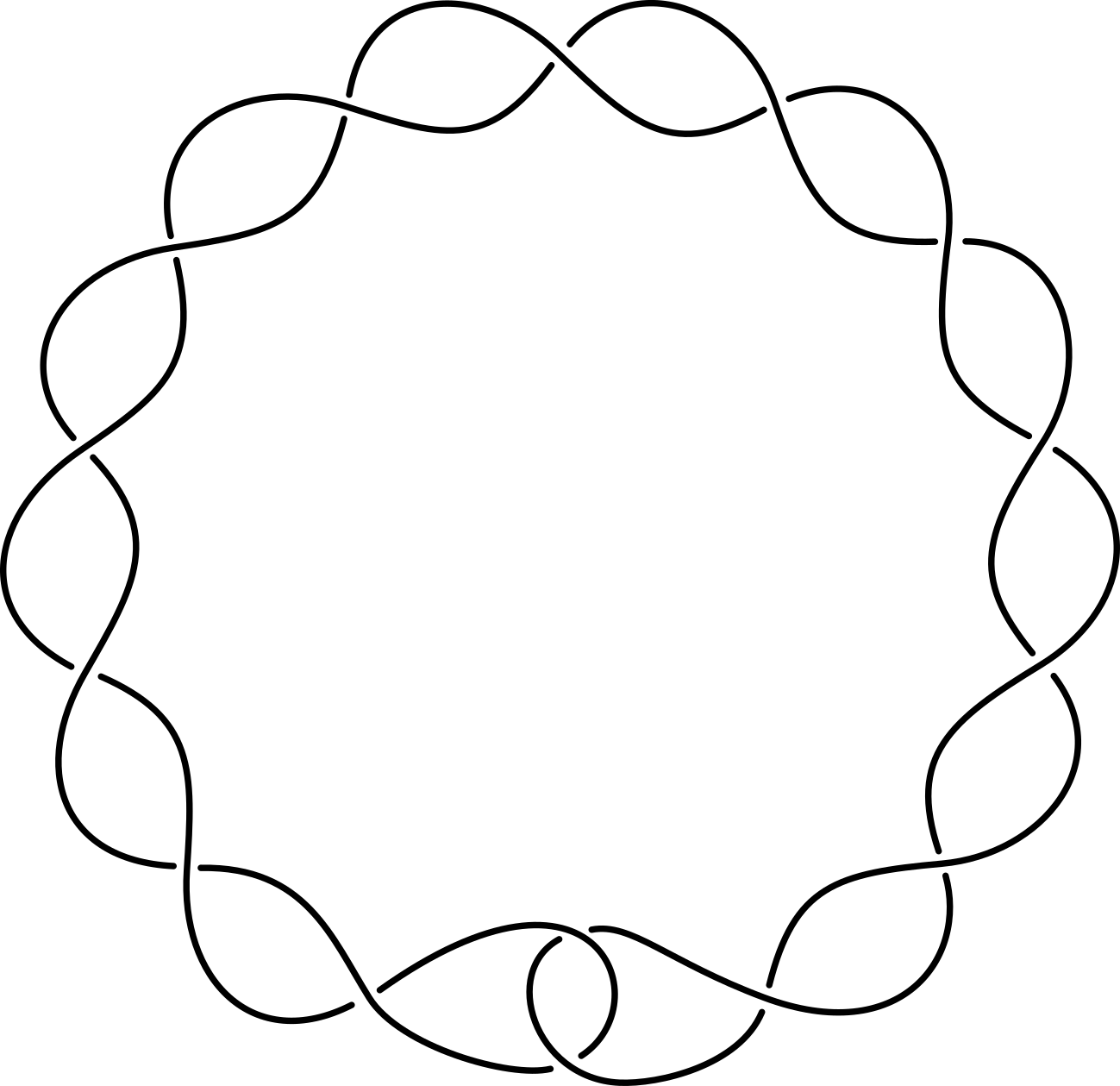}
\caption{By performing a $\Omega_2^+$ move in the central (or external) region, we obtain a diagram with lower valence.}
\label{fig:nodoesempio}
\end{figure}
When the internal (and external) region has more than $12$ faces, performing a $\Rd^+$ move decreases the valence, according to Equation \eqref{eqn:changer2} (with $a=d=4$ and $b=c=8$).

In particular, any knot in which all diagrams realizing the crossing number have many regions with a sufficiently high number of edges provides an example where the minimal valence is not realised in the diagram with minimal crossing number.
\end{rmk}

We prove here some facts that are going to be useful in the next sections.
First of all we show that the graph can \emph{distinguish} between the different Reidemeister moves. This means that by looking at a neighborhood of an edge of $\sfera$, we can tell which Reidemeister move it represents; furthermore this will provide a way to read the crossing number of a diagram $D$ from the combinatorial structure of $S(D)$.

\begin{thm}\label{distinguemosse}
The $S^2$-graph distinguishes the Reidemeister moves, and detects the crossing number of a diagram.
\end{thm}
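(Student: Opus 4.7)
The plan is to distinguish the four types of Reidemeister moves one class at a time from the combinatorial structure of $\sfera$, and then recover the crossing number of a diagram $D$ as a weighted count of certain edges at the vertex $D$.

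First I would invoke Theorem \ref{prop:lung3}: every triangle in $\sfera$ has the form $\Omega_T^\pm$-$\Omega_1^\mp$-$\Omega_1^\mp$ (apart from the sporadic unknot triangles of Figure \ref{fig:controesempio}, to which I will return). Hence an edge lies in some triangle \emph{iff} the corresponding move is an $\Omega_1$ or an $\Omega_T$; this separates $\{\Omega_1, \Omega_T\}$ from $\{\Omega_2, \Omega_3\}$ at a stroke. To split $\Omega_1$ from $\Omega_T$ inside a triangle $T = (D_0, D_1, D_2)$, I would use the strict monotonicity coming from Proposition \ref{prop:changer1} (and its $\Omega_T$ analogue, Proposition \ref{prop:changesr2}): each $\Omega_1^+$ and each $\Omega_T^+$ move strictly increases the valence, thanks to the $\varepsilon$-bounds of \eqref{epsilon1}. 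Applied along the two $\Omega_1$ edges of a normal triangle, this forces a strict chain $v(D_0) < v(D_1) < v(D_2)$, and the $\Omega_T$-edge is singled out as the unique edge of $T$ joining the valence-minimal vertex to the valence-maximal one. The non-normal triangles catalogued in Remark \ref{rmk:triangolinormali} are handled by a direct case check of Figures \ref{fig:triangoloperiodico1}, \ref{fig:triangoloperiodico3} and \ref{fig:triangoloperiodico2}, where periodicity-induced multi-edges feed back into the valence computation but preserve the ordering.

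Once $\Omega_1$ and $\Omega_T$ edges are labelled, I would propagate the information to split $\Omega_2$ from $\Omega_3$ by constructing a relative crossing-number function $c \colon V(\sfera) \to \Z$, well defined up to a global additive constant, by walking along paths and summing $\pm 1$ contributions from $\Omega_1$-edges and $\pm 2$ from $\Omega_T$-edges (with the sign read off from the endpoint-valence ordering in the containing triangle). Consistency on cycles is automatic, since $cr$ is a genuine function on diagrams. An edge $e = (D, D')$ outside every triangle is then an $\Omega_3$ iff $c(D) = c(D')$, and an $\Omega_2$ iff $|c(D) - c(D')| = 2$. With the four move types identified, the crossing number of a non-periodic $D$ drops out: by Figure \ref{fig:possibilir1} the number of $\Omega_1^+$-edges at $D$ equals $4\alpha(D) = 8\,cr(D)$, where an $\Omega_1^+$-edge is recognised as an $\Omega_1$-edge whose other endpoint has strictly larger valence. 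For periodic $D$ the same count, divided by the period $P(D)$ (itself readable from the multi-edge structure of $S(D)$ via Figures \ref{fig:multiedge} and \ref{fig:triangoloperiodico1}), recovers $cr(D)$.

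The two main obstacles I anticipate are the sporadic unknot triangles of Figure \ref{fig:controesempio}, which place some $\Omega_2$-edges inside triangles and so force a local refinement of the initial dichotomy for $\mathcal{G}_S(\bigcirc)$ (I would recognise those exceptional configurations by the chain of alternating $\Omega_1^+$-curls attached to the $\Omega_2$-region and excise them by hand), and the non-normal periodic triangles, where verifying that the valence chain $v(D_0) < v(D_1) < v(D_2)$ never collapses will require combining the $\varepsilon$-bounds of \eqref{epsilon1} with the extra multi-edge contributions imposed by periodicity. This last verification is where most of the bookkeeping will concentrate.
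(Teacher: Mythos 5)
Your treatment of the non-periodic case matches the paper's: triangles isolate $\{\Omega_1,\Omega_T\}$ from $\{\Omega_2,\Omega_3\}$, valence comparisons inside a triangle separate $\Omega_1$ from $\Omega_T$, the count of $\Omega_1^+$ edges gives $8\,cr(D)$, and crossing-number differences then separate $\Omega_2$ from $\Omega_3$. The genuine gap is in the periodic case, and it is not just ``bookkeeping''. Propositions \ref{prop:changer1} and \ref{prop:changesr2} are stated (and proved) only for non-periodic diagrams: when a neighbouring vertex is periodic, moves related by the symmetry are identified to a single edge, so its valence is cut down by roughly the period (cf. Lemma \ref{cor:lowerperiodico} and Equation \eqref{eqn:lowerboundlinear}), and the chain $v(D_0)<v(D_1)<v(D_2)$ can genuinely invert -- for instance when $D_2$ carries $p$ symmetrically placed tentacles while $D_1$, obtained by removing one top curl, is asymmetric and has full valence. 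Your plan to restore the ordering ``combining the $\varepsilon$-bounds with the extra multi-edge contributions'' has no mechanism behind it, and your sign assignment for the relative crossing-number function $c$ (and hence the claimed cycle-consistency, and the $\Omega_2$/$\Omega_3$ split) inherits the same problem, since every sign is read from a valence comparison that may be wrong at a periodic endpoint.

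The second unjustified step is reading $P(D)$ off ``the multi-edge structure of $S(D)$'': multi-edges of arbitrary order already occur for non-periodic diagrams (stacked identical curls, Figure \ref{fig:multiedge1}), so the multi-edge pattern does not determine the period, and dividing the $\Omega_1^+$-count by a number you cannot extract from the graph does not recover $cr(D)$. The paper avoids both problems by never comparing valences at possibly periodic vertices: it uses the generalized triangle number $n_{tr}(D)$ (the number of triangle-edges at $D$, which is graph-intrinsic) together with Lemma \ref{lemma:miniprop}, which produces, by stacking a curl and a tentacle on top of the highest existing tentacle, specific neighbours $D'$, $D''$, $D'''$ that are guaranteed non-periodic, characterised as the maximisers of $n_{tr}$; non-periodicity of $D$ is then certified by the relation $n_{tr}(D)=n_{tr}(D')-17$, the crossing number is computed in the totally non-periodic ball around $D'''$, and one gets $cr(D)=cr(D''')-3$. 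Some device of this kind -- moving to a certified non-periodic neighbourhood before any valence or crossing count is trusted -- is what your argument is missing.
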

\begin{proof}
In the interest of clarity we are going to start by examining the non-periodic case. Fix a diagram $D \in \diag$ for a  non-periodic knot $K$. The combinatorics of $S(D)$ will allow us to distinguish the various moves.

Since by Theorem \ref{prop:lung3} all $\Omega_1$ moves are paired with at least one $\Omega_T$ move in a triangle, it is easy to argue that the graph can tell apart the two sets of moves $M_1 = \{\Omega_1^{\pm}, \Omega_T^{\pm}\}$ and $M_2 = \{\Omega_2^{\pm}, \Omega_3\}$.

To further separate the elements of $M_1$ we can thus restrict to triangles in $S(D)$.
Choose an edge emanating from a vertex $D$, which is part of a triangle. There are $3$ possibilities, shown in Figure \ref{fig:distinguetriangoli}.
\begin{figure}
\includegraphics[width=10cm]{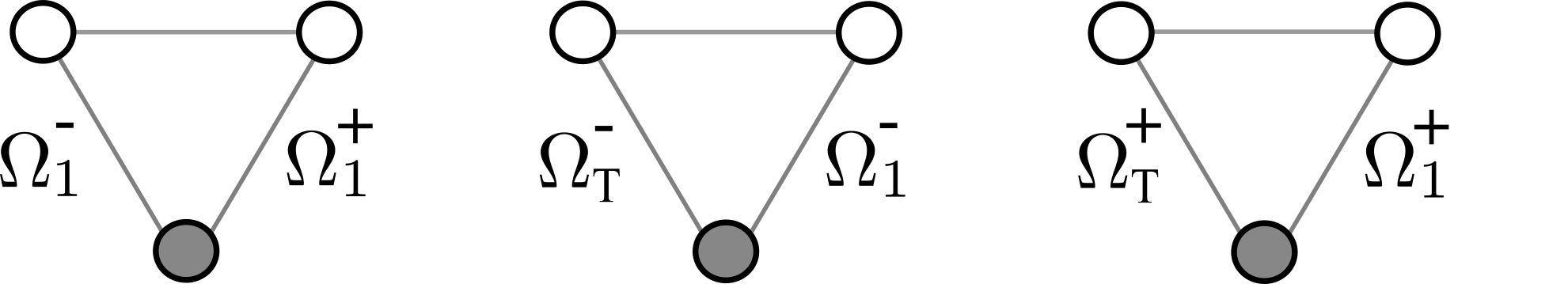}
\caption{The grey dots represent the diagram $D$ we start from.}
\label{fig:distinguetriangoli}
\end{figure}

From this, using Proposition \ref{prop:changer1}, it is easy to argue that $\sfera$ can tell apart the elements in $M_1$; indeed, if only one of the two moves decreases the valence, then they are both $\Omega_1$s, and the one which decreases it is the $\Omega_1^-$. If both moves decrease the valence, then the one that decreases it most is the $\Omega_T^-$, and the other is a $\Omega_1^-$. Lastly, if both moves increase the valence, then the one that increases it most is a $\Omega_T^+$, and the other is a $\Omega_1^+$.

Now, since the number of $\Omega_1^+$ moves is a multiple of the arc number of the diagram (cf. Figure \ref{fig:possibilir1}), the crossing number $cr(D)$ corresponds to $\frac{1}{8}\# \Omega_1^+ (D)$.
Hence, since we can distinguish and count such moves, we can read the crossing number of a diagram from $S(D)$.

Using this information we can tell apart the elements of $M_2$ as well and conclude: the only remaining moves are $\Omega_2^\pm$s and $\Omega_3$s, all of which are not part of a triangle. These appear as edges connected only to the center of $S(D)$. We can distinguish between them by counting the crossing number of the vertices they connect $D$ to; one then just needs to recall that  $\Omega_3$ moves do not increase it, while $\Rd$ increase or decrease it by $2$. Hence it follows that we can distinguish among $\Omega_2^+$,$\Omega_2^-$ and $\Omega_3$ moves as well.

In all the previous discussion, in order to determine $cr(D)$, we only used the fact that all diagrams in $S(D)$ were non-periodic; this fact will allow us to compute it in the periodic case as well.

If $K$ is periodic we can not use directly the various equations relating the valence of two neighbouring vertices, since one\footnote{Or both, see also Question \ref{quesperiodici}.} could be periodic.

Instead of trying to directly detect from the structure of the graph whether a diagram is periodic, we can use Lemma \ref{lemma:miniprop} to bypass most complications. For every vertex $D$, define the generalized triangle number $n_{tr}(D)= \#\Omega_1^{\pm}(D) +\#\Omega_T^{\pm}(D)$. This quantity is computable from the graph, since by Theorem \ref{prop:lung3} it coincides with the number of edges emanating from $D$ which are part of at least one triangle. 
By Lemma \ref{lemma:miniprop}, at least two diagrams appearing in a triangle, reached by a $\Omega_1^+$ and a $\Omega_T^+$ respectively will
be non-periodic, and we claim that such diagrams maximise $n_{tr}$ among all the diagrams reached by edges starting from $D$ that are part of at least one triangle. Let $a_h(D)$ denote the number of (maximal) height $h$-tentacles in $D$, and define $$n(D)= p_1(D) +  \sum_{h \geq 1} h a_h (D).$$
Note that $n(D)$ is equal to the sum $\Omega_1^- (D) +\Omega_T^- (D)$ when $D$ is not periodic.
Then, for the diagrams $D''$ and $D^\prime$ in Lemma \ref{lemma:miniprop}, the following equalities hold: 
$$ n_{tr}(D'')= 16cr(D'') + n(D'') = 16(cr(D)+2) + n(D)+2 $$
$$ n_{tr}(D^\prime_{1}) = 16cr(D') + n(D')= 16(cr(D)+1) +  n(D)+1$$ 
This follows since we are performing the curls on the top of a tentacle (or on any arc, if there are no curls in $D$), and this fact ensures that the number of $\Omega_*^-$ moves is equal to $n(D)+1$ when $*=1$ and to $n(D)+2$ when $*=T$.
On the other hand, for any diagram $D^+_T$ and $D^{+}_1$ reached from $D$ by a $\Omega_T^+$ move and a $\Omega_1^+$ respectively, the following inequalities hold: $$ n_{tr}(D^{+}_T) \leq 16(cr(D)+2) + n(D)+2 $$ 
$$ n_{tr}(D^{+}_1) \leq 16(cr(D)+1) +  n(D)+1 $$\\
The presence of periodicity in $D^+_1$ or $D^+_T$ can only decrease the value of $n_{tr}$, and the same holds if the moves are not performed (with the appropriate sign) on the top of a pre-existing tentacle. In other words these moves maximise $\#\Omega_1^- + \#\Omega_T^-$. If we consider moves that decrease the crossing number, disregarding the possible periodicities, the numbers $n_{tr}$ we obtain have no chance of being greater than $ n_{tr}(D'')$.
So, choose the diagrams in $S(D)$ maximising this quantity; they correspond to vertices reached by $\Omega_T^+$ moves. Consider all the edges that form triangles with them: these have to correspond to diagrams reached by $\Omega_1^+$ moves. Choose between them one maximising $n_{tr}$. Notice that $D$ is non-periodic if and only if $n_{tr}(D)=n_{tr}(D')-17$.
Now, choose  $D'''$ in $S(D')$ forming a triangle with $D''$, with $S(D''')$ totally non-periodic, and such that it maximises $n_{tr}$ in $S(D')$. We know that such a diagram exist by Lemma \ref{lemma:miniprop}, and we can check the hypothesis on the non-periodicity of $S(D''')$ thanks to the above criteria. Then, we can recover $cr(D''')$, and obtain $cr(D)$ as $cr(D''') -3$.

Hence, using the crossing number as in the non-periodic case, we can tell apart the various types of moves, and we are done.
\end{proof}
This last result will allow us to say ``perform a $\Omega_i^\pm$ move on a diagram'' in a way that is meaningful also at the level of the graph. In other words, we just proved that the $\mathcal{R}$-graphs intrinsically contain the same amount of information as the same graphs with edges decorated according to which $\Omega_i^\pm$ move we are performing.\\

By the previous result we know that the crossing number of a diagram can be read by looking at $S(D)$. Thus if a knot is non-periodic, taking the minimum of $\frac{1}{8}\# \Omega_1^+ $ among all vertices of the corresponding Reidemeister graphs gives back $cr(K)$, the crossing number of the knot. For periodic knots, this procedure produces a slightly different invariant, which can be regarded as crossing number up to periodicities.
More precisely define $$\widehat{cr}(K) = \frac{1}{8}\min_{D \in \diag} \# \Omega_1^+ (D).$$  $\widehat{cr}(K) = cr(K)$ if $K$ is non-periodic, while in general $\widehat{cr}(K) \le cr(D)$. As an example, we have $\widehat{cr}(3_1) = \frac{1}{2}$.

Note that a similar consideration for the other kinds of moves does not yield useful invariants: it is possible to show that the minimal number of $\Omega_2^+$ moves is simply related to the combinatorics of the number of regions in the complement of the diagram on $\R^2$ or $S^2$, and the minimal number of $\Omega_2^-$ and $\Omega_3$ moves one can perform within a knot type is always 0 (as can be seen by ``killing'' all the $\Omega_3$ moves with a $\Omega_1$ in the region with $3$ edges, similarly to what was done in Figure \ref{kill}). Nonetheless one might obtain some meaningful invariants by restricting diagrams not minising the valence.\\

The knowledge of the crossing number from the graph also implies that we can use Coward and Lackenby's result \cite{coward2014upper} to give some upper bounds on the path distance between two diagrams.

\section{Global properties}\label{global}

This section is devoted to the analysis of some global properties of the $\mathcal{R}$-graphs. We begin by proving that each $\mathcal{R}$-graph is not hyperbolic.

\begin{prop}\label{prop:latticeembed}
The $\mathcal{R}$-graphs are not hyperbolic.
\end{prop}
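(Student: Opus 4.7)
The plan is to violate the Gromov $4$-point condition using $4$-tuples of diagrams whose pairing sums behave as in a square lattice. I will construct a $2$-parameter family $\{D_{m,n}\}_{m,n\geq 0}$ of diagrams of $K$ and show that on a $2\times 2$ sub-grid of side $n$ one of the three $4$-point sums exceeds the other two by at least $n$.

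Fix any diagram $D\in\diag$ with at least one crossing and pick two disjoint arcs $\alpha,\beta\subset D$. For $m,n\in\mathbb{N}$ let $D_{m,n}$ be obtained from $D$ by inserting, in the interior of $\alpha$, $m$ curls with positive crossings and, in the interior of $\beta$, $n$ curls with negative crossings (each curl being produced by a single $\Ru^+$ move). Since these modifications are local and supported on disjoint arcs, $D_{m,n}$ is well defined up to planar (or spherical) isotopy, and
\[
cr(D_{m,n}) = cr(D) + m + n,\qquad w(D_{m,n}) = w(D) + m - n,
\]
where $w$ denotes the writhe. The \emph{sign} convention is crucial: giving $\alpha$-curls and $\beta$-curls opposite signs promotes the writhe to an invariant which is sensitive to the arc on which a curl lives.

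Each Reidemeister move changes the writhe by at most one (the value $\pm 1$ only under $\Ru$) and the crossing number by at most two. Applied to the four corner diagrams $D_{0,0},D_{n,0},D_{0,n},D_{n,n}$, these bounds force each of the four ``side'' distances to be $\geq n$, and crucially yield
\[
d(D_{n,0},D_{0,n})\geq |w(D_{n,0})-w(D_{0,n})|=2n,\qquad d(D_{0,0},D_{n,n})\geq n.
\]
Each side distance is also $\leq n$ by the obvious path, so equals $n$ exactly. Substituting into the three $4$-point pairings gives
\[
\tilde A := d(D_{0,0},D_{n,0})+d(D_{0,n},D_{n,n})=2n,
\]
\[
\tilde B := d(D_{0,0},D_{0,n})+d(D_{n,0},D_{n,n})=2n,
\]
\[
\tilde C := d(D_{0,0},D_{n,n})+d(D_{n,0},D_{0,n})\geq 3n,
\]
and $\tilde C-\max(\tilde A,\tilde B)\geq n\to\infty$. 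This contradicts $\delta$-hyperbolicity for any fixed $\delta\geq 0$, so neither $\sfera$ nor $\reid$ can be hyperbolic (the construction and estimates are insensitive to whether we work on the plane or on the sphere).

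The only subtle step is the lower bound $d(D_{n,0},D_{0,n})\geq 2n$: these two diagrams have identical crossing numbers, so a crossing-count argument is vacuous and would collapse the whole estimate. Choosing opposite signs on the two arcs is precisely what is needed to turn the writhe into an invariant that detects the asymmetry of the two ``anti-diagonal'' vertices, which in turn is what makes $\tilde C$ strictly larger than $\tilde A=\tilde B$ and produces the unbounded $4$-point defect.
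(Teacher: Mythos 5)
Your argument is correct, and it shares its engine with the paper's proof: both build a two-parameter grid of diagrams out of a fixed $D$ by adding positive and negative curls, and both exploit the flatness of that grid inside the $\mathcal{R}$-graph. The difference lies in how the flatness is certified and converted into non-hyperbolicity. The paper places all the curls on a single arc, asserts an isometric embedding of the rank-$2$ lattice $\Z^2$ certified by the Hass--Nowik invariant ($I_{lk}(D_{a,b})=I_{lk}(D)+aX_0+bY_0$), choosing $D$ non-periodic and without $1$-regions, and non-hyperbolicity then follows from the presence of such a lattice. You instead split the signs across two arcs, control only the four corner diagrams $D_{0,0},D_{n,0},D_{0,n},D_{n,n}$ with two elementary invariants --- the writhe (changed by at most $1$ per move, and only by $\Ru$) and the crossing number (changed by at most $2$ per move) --- and then violate the four-point condition directly with defect at least $n$. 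This buys elementarity and self-containedness: no $I_{lk}$, no hypotheses on $D$ beyond having a crossing, and no appeal to the (standard but unstated) fact that an isometrically embedded $\Z^2$ obstructs hyperbolicity; your sign-splitting is exactly what makes the anti-diagonal distance at least $2n$, doing the work the paper delegates to $I_{lk}$. What the paper's formulation buys in exchange is the finer structural statement that an actual isometric copy of the lattice sits in the graph, which is more than the proposition needs. One pedantic point: fix once and for all the side of each arc on which the curls are placed, so that $D_{m,n}$ is a specific vertex of the graph; any fixed convention works and none of your estimates are affected.
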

\begin{proof}

Choose a non-periodic diagram $D \in \diag$ not containing $1$-regions, an arc on $D$, and a polygon $P$ having this arc as a face.  We can embed isometrically the rank $2$ lattice graph as follows: to the pair $(a,b) \in \Z^2$ associate the configuration on the arc composed by $a$ positive curls in the region $P$ if $a > 0$, and in the other region touching the arc if $a<0$; do the same for $b$, this time with negative crossings on the right of the previous ones. An example is shown in Figure \ref{fig:nonhypspri}.  The fact that the embeddings are isometric follows \emph{e.g.} from the analysis of the $I_{lk}$ invariants of the diagrams: $I_{lk}(D_{a,b}) = I_{lk}(D) + aX_0 + bY_0$, where $D_{a,b}$ is the diagram corresponding to the element $(a,b)$.

\begin{figure}[h!]
\includegraphics[width=4cm]{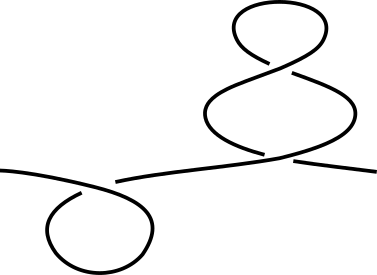}
\caption{This configuration represents $(-1,2) \in \Z^2$.}
\label{fig:nonhypspri}
\end{figure}
\end{proof}

\begin{prop}\label{prop:neverplanar}
The Reidemeister graphs $\reid$ and $\sfera$ are not planar.
\end{prop}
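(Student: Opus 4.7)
The plan is to apply Kuratowski's theorem by exhibiting an explicit subdivision of $K_5$ inside each Reidemeister graph. Fix a non-periodic diagram $D \in \diag$ with at least five arcs; such diagrams exist for every knot type, since arc count grows with the crossing number and non-periodic diagrams are readily produced via Lemma \ref{lemma:miniprop}. Label five distinct arcs $a_1,\dots,a_5$ of $D$ and, on each $a_i$, fix one specific $\Omega_1^+$ move (for instance among the four shown in Figure \ref{fig:possibilir1}); let $D_i$ denote the resulting diagram. For each pair $1 \le i < j \le 5$, let $D_{ij}$ be the diagram obtained from $D$ by performing both of the chosen moves. Since the two moves are supported in disjoint neighborhoods of $D$, they commute and $D_{ij}$ is well defined.

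Next I would check that the fifteen diagrams $\{D_i\}_{i=1}^5 \cup \{D_{ij}\}_{1 \le i < j \le 5}$ are pairwise distinct as vertices of $\reid$ (resp.\ $\sfera$). The two families are separated by crossing number, since $cr(D_i) = cr(D)+1$ while $cr(D_{ij}) = cr(D)+2$; within each family the diagrams are distinguished by which arc(s) of $D$ carry the newly added curls, and non-periodicity of $D$ prevents any planar (or spherical) isotopy from exchanging these arcs. This is the only genuinely delicate step, and it is the reason one starts from a non-periodic $D$.

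Finally, for each pair $i<j$ the triple $D_i,\,D_{ij},\,D_j$ is a path of length two in the Reidemeister graph: the edge $D_i$--$D_{ij}$ corresponds to performing the $j$th chosen $\Omega_1^+$ move, and the edge $D_{ij}$--$D_j$ corresponds to undoing the $i$th curl. The ten resulting paths (one per unordered pair in $\{1,\dots,5\}$) have pairwise disjoint interiors, since each interior consists of a single vertex $D_{ij}$ and those are all distinct. Consequently the branch vertices $D_1,\dots,D_5$ together with these ten internally disjoint paths realize a topological $K_5$ inside the graph, and Kuratowski's theorem yields non-planarity. The same argument applies verbatim to $\sfera$, with $S^2$-isotopy in place of planar isotopy. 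I do not expect any substantive obstacle beyond the bookkeeping of pairwise distinctness described above.
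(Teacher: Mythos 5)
Your overall strategy---five singly-curled diagrams $D_i$ as branch vertices, joined through the doubly-curled diagrams $D_{ij}$ by the commuting-moves trick---is sound, and the connector device is essentially the same one the paper uses for its length-two paths (Figure \ref{fig:latopentagono}); the paper simply realizes $K_5$ as a minor via one explicit local configuration of moves (Figure \ref{fig:nonplanar}) rather than your global bookkeeping. The genuine gap is in the step you yourself flag as delicate: pairwise distinctness of the $D_i$ (and of the $D_{ij}$). Your justification is that an isotopy identifying $D_i$ with $D_j$ would have to ``exchange the arcs'' $a_i$ and $a_j$, contradicting non-periodicity of $D$. That implication is false in general: an isotopy between $D_i$ and $D_j$ need not match the newly created curl of $D_i$ with the newly created curl of $D_j$---it can match it with a monogon already present in $D$, so it induces no symmetry of $D$ at all. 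Concretely, if $D$ contains a chain of identical curls on consecutive arcs (the multi-edge configuration of Figure \ref{fig:multiedge1}), then adding one more identical curl on any of those distinct arcs produces the \emph{same} diagram, even though $D$ may be completely asymmetric; with such a choice of $a_i$, $a_j$ two of your branch vertices coincide and the $K_5$ subdivision collapses. This scenario is not far-fetched for your setup: the non-periodic diagrams supplied by Lemma \ref{lemma:miniprop} carry tentacles, hence $1$-regions, which is exactly where such coincidences live (cf. Figure \ref{fig:configr1}).

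The argument is repairable, but it needs an explicit extra hypothesis and one more standard fact. For instance, insist that $D$ be non-periodic with $p_1(D)=0$ (or choose the arcs and curl signs so that the new monogon cannot be absorbed into a pre-existing curl or tentacle): then each $D_i$ has a unique $1$-region, any homeomorphism of $S^2$ (or of the plane) taking $D_i$ to $D_j$ must match these monogons, and erasing them yields a self-homeomorphism preserving $D$ and sending $a_i$ to $a_j$. Even at that point, to contradict the paper's definition of periodicity you must still promote this diagram-preserving homeomorphism to a \emph{finite-order} one, a standard but not free step that you should at least cite or sketch; and the same care is needed to separate the ten diagrams $D_{ij}$. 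None of this is fatal, but as written the distinctness step---the heart of your proof---does not go through, whereas the paper's purely local $K_5$-minor configuration avoids the issue.
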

\begin{proof}
We are going to prove that for every knot $K$ we can find a $K_5$ minor\footnote{As is customary, $K_n$ denotes the complete graph on $n$ vertices.} contained in each $\mathcal{R}$-graph of $K$. This is achieved by considering the local construction shown in Figure \ref{fig:nonplanar}.
\begin{figure}
\includegraphics[width=7cm]{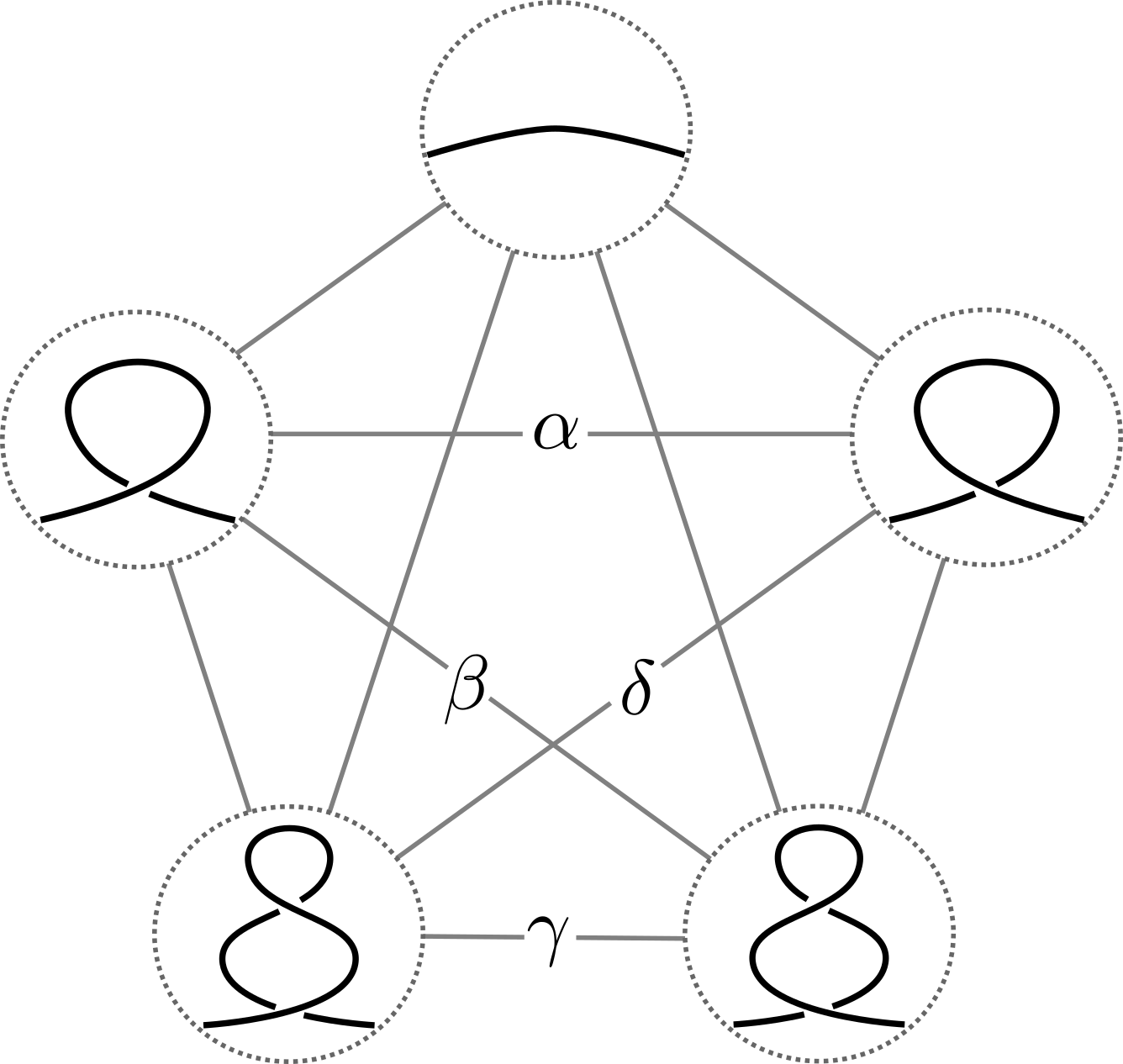}
\caption{A local embedding of $K_5$ as a minor of any $\reid$.}
\label{fig:nonplanar}
\end{figure}
The edges denoted with a Greek letter are length $2$ paths; as shown in Figure \ref{fig:latopentagono}, these can be obtained by putting alongside the two moves, and then resolving either one.

\begin{figure}
\includegraphics[width=8cm]{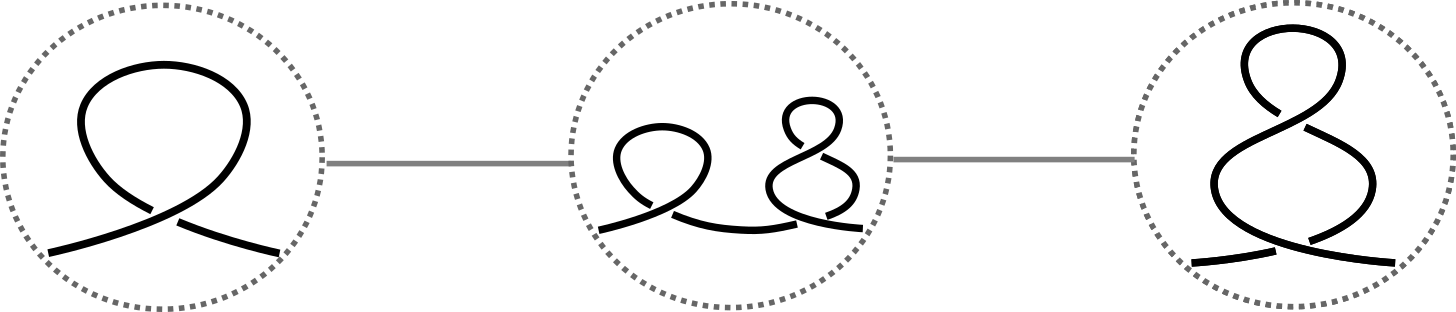}
\caption{The path corresponding to the $\beta$ edge in Figure \ref{fig:nonplanar}.}
\label{fig:latopentagono}
\end{figure}
\end{proof}

In graph theory, it is customary to consider \emph{coarse} properties of a (infinite and locally finite) graph. One way to do this is to study the quasi-isometry class of the graph, often through related invariants.

A \emph{ray} of a locally finite graph $G$, is a semi-infinite simple path in $G$; two rays $r_1, r_2 \subset G$ are regarded as equivalent if there exists a third ray $r_3$, containing infinitely many vertices of both $r_1$ and $r_2$. \\An \emph{end} is an equivalence class of rays, and it is called \emph{thick} if it contains infinitely many pairwise disjoint rays.

\begin{prop}\label{thickend}
Each $S^2$-Reidemeister graph has only one thick end.
\end{prop}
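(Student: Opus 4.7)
My plan is to prove something slightly stronger than stated: $\sfera$ has exactly one end, and this end is thick. Existence of a thick end is immediate from Proposition \ref{prop:latticeembed}: the isometric embedding $\Z^2\hookrightarrow\sfera$ constructed there sends each column $\{n\}\times\Z_{\ge 0}$, $n\in\Z$, to a ray of $\sfera$, and these rays are pairwise disjoint by injectivity of the embedding, so some end of $\sfera$ contains infinitely many pairwise disjoint rays.

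It remains to prove $\sfera$ has at most one end, equivalently that for every finite subgraph $F\subset\sfera$ the complement $\sfera\setminus F$ has a unique infinite connected component. Set $N:=\max_{D\in F}cr(D)$. Equation \eqref{eqn:valenza} bounds $v(D)$ from above by a polynomial in $cr(D)$ (for a fixed knot type, also using that $K$ has only finitely many periods), so the set $\{D:cr(D)\le N\}$ has uniformly bounded valence, and is therefore finite by Lemma \ref{lemma:pochi}. Hence, up to a finite set of vertices, $\sfera\setminus F$ coincides with $S_N:=\{D:cr(D)>N\}$, and the proof reduces to showing that $S_N$ is connected in $\sfera$.

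For this, I would fix any diagram $U$ of the unknot with $cr(U)>N$ (any unknot diagram with sufficiently many curls will do) and, for $D_1,D_2\in S_N$, produce the concatenated walk
\[
D_1\;\longrightarrow\;D_1\# U\;\longrightarrow\;D_2\# U\;\longrightarrow\;D_2.
\]
The middle leg comes from applying any Reidemeister sequence from $D_1$ to $D_2$ only on the $D$-factor of the connected sum, so that $U$ is carried along unchanged and every intermediate diagram has crossing number $\ge cr(U)>N$. The outer legs create (respectively remove) $U$ inside a small disk attached to $D_i$: an $\Omega_1^+$ move produces a single curl on an arc of $D_i$, and a Reidemeister sequence localised in the disk then turns that $1$-curl into $U$. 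Since only the local part of the diagram changes, every intermediate diagram decomposes as $D_i$ together with a local unknot piece and hence has crossing number at least $cr(D_i)>N$. The only slightly delicate point, and the main obstacle, is verifying that these outer legs are indeed realisable inside an arbitrarily small disk; this is an application of Reidemeister's theorem to the two unknot diagrams involved, noting that transient fluctuations of the local crossing count (even down to $0$, momentarily returning to $D_i$) are harmless for the estimate. Together the three legs keep the entire walk inside $S_N$, which yields the connectedness of $S_N$ and completes the proof.
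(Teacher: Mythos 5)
Your argument is correct (and proves the slightly stronger statement that $\sfera$ has exactly one end, which is thick), but its mechanism differs from the paper's. The paper also works by connecting two distant diagrams through moves that never disturb a distinguished local decoration, but its certificate for staying outside the removed region is \emph{tentacle height}: it removes a ball $S_R(D)$, lets $H$ be the maximal tentacle height among diagrams in that ball, grows a tentacle of height greater than $H$ on each of two diagrams chosen outside $S_{R+H+1}(D)$, and connects the decorated diagrams by moves avoiding the new tentacles; the extra $H+1$ in the radius is what keeps the decorating legs out of the ball. Your certificate is \emph{crossing number}: you first replace an arbitrary finite $F$ by a crossing-number sublevel set, using finiteness of diagrams with bounded crossing number (this is exactly the fact underlying Lemma \ref{lemma:pochi}; for the valence bound you should cite Equation \eqref{eqn:boundvalenza} rather than \eqref{eqn:valenza}, which is stated for non-periodic planar diagrams, or just quote the finiteness fact directly), and your decoration is a connect-summand $U$ with $cr(U)>N$, so the outer legs are automatically safe with no distance bookkeeping. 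Each route buys something: the paper skips your reduction step but pays with the $R+H+1$ bookkeeping; yours is cleaner on that score and localises all the work in one connectivity claim about $\{D: cr(D)>N\}$.

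One remark on where the real work sits. You flag the outer legs as the delicate point, but if you take $U$ to be a chain of $N+1$ curls on a single arc (which you allow), the outer legs are literally $N+1$ successive $\Omega_1^+$ moves inside a disk and need no appeal to Reidemeister's theorem. The genuinely under-justified step is the middle leg: the chosen move sequence from $D_1$ to $D_2$ may involve the very arc carrying the bead $U$, so ``performing the moves on the $D$-factor only'' requires first sliding the bead out of the disk of the move, i.e.\ past crossings, which costs extra $\Omega_2$ and $\Omega_3$ moves; these leave $U$ intact, so your estimate $cr>N$ survives, but the bead-sliding observation should be stated. (The paper's phrase ``connected through moves that avoid the newly created tentacles'' glosses over the same point, so you are at the paper's level of rigor here, not below it.) Finally, for thickness, either note that the images of the $\Z^2$ columns are pairwise equivalent rays (a zigzag ray meets two columns infinitely often), or simply observe that once uniqueness of the end is established, any infinite family of pairwise disjoint rays forces that unique end to be thick.
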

\begin{proof}
It is immediate to show (\emph{e.g.}$\;$using paths as those in Figure \ref{fractal1} or tentacle configurations) that there are infinitely many disjoint rays in $\reid$ and $\sfera$ for each choice of $K \in \nodi$.
To show that there is only one end, we will prove that removing any ball with arbitrary radius does not disconnect the graphs into two pieces, each containing infinitely many vertices. This in turn would immediately imply that there is only one equivalence class of rays in the graph.\\
Consider a diagram $D \in \diag$ for a knot $K$, and the radius $R$ ball $S_R(D)$ in $\reid$ (or equivalently in $\sfera$). Call $H$ the maximal height among the tentacles of the diagrams contained in  $S_R(D)$, and take any two diagrams $D_0, D_1 \in \diag$ which do not belong to $S_{R+H+1}(D)$; we need to find a path in $\reid \setminus S_R(D)$ connecting $D_0$ to $D_1$. Choose an arc on $D_0$ and on $D_1$, and create on each a tentacle of height greater than $H$. These two new diagrams $D_0^\prime$ and $D_1^\prime$ can be connected through moves that avoid the newly created tentacles\footnote{Remember that we are working on $S^2$.}, and this path $\gamma$ from $D_0^\prime$ to $D_1^\prime$ will not intersect $S_R(D)$, thanks to the hypothesis on $H$.
Attaching to the ends of $\gamma$ the two paths $\widehat{\gamma}_i$ from $D_i$ to $D_i^\prime$, induced by the creation of the tentacles, gives the desired path from $D_0$ to $D_1$. Note that the hypothesis on the height of the tentacle allow us to say that the paths $\widehat{\gamma}_i$ do not intersect $S_R(D)$, and $S_{R+H+1}(D) \setminus S_R(D)$ contains only finitely many vertices.
\end{proof}

The $S^2$-Reidemeister graphs contain only one end, but infinitely many disjoint rays, hence by Halin's grid Theorem \cite{halin}, each must contain a subdivision of the planar hexagonal tiling.

\begin{rmk}
One might find reasonable to assume that the graphs $\reid$ and $\sfera$ are quasi-isometric; it is however easy to see that the ``natural'' map\footnote{Where we map a planar diagram $D$ to its equivalence class in $\sfera$.} $\reid \hookrightarrow \sfera$ between the two graphs fails to be a quasi-isometry. This can be seen from Figure \ref{fig:zonaext}: the two diagrams on the left and right can have arbitrarily large distance in $\reid$, but are identified in $\sfera$.
\end{rmk}

These graphs also exhibit a fractal behavior, which can be observed \emph{e.g.}$\;$by considering sequences of $\Omega_1$ moves as in Figure \ref{fractal1}.
\begin{figure}[h]
\includegraphics[width=10cm]{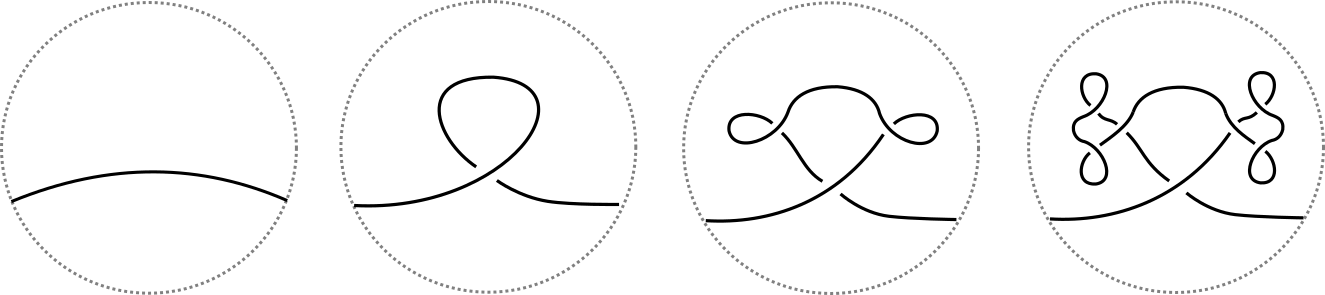}
\caption{The ``fractal behavior'' of $\reid$.}
\label{fractal1}
\end{figure}
The corresponding subgraph can be embedded (infinitely many times) in each $\mathcal{R}$-graph for any knot $K$.\\

The $\mathcal{R}$-graphs can be filtered in several ways; the easiest one is to consider the filtration induced by the distance from the vertices with minimal valence.

Given a knot $K$, denote by $\mathcal{F}_m (K)$ the subgraph spanned by those vertices whose distance from the minimal diagrams of $K$ is $\le m$, and denote by $\# \mathcal{F}_m (K)$ the number of vertices it contains.

We can extract some numerical invariants from this filtration on $\reid$:
\begin{defi}\label{def:filtrazione1}
Define $$f_K : \N \longrightarrow \N $$ $$ f_K (m) = \# \mathcal{F}_m (K),$$
and $$ M(K) = \min_{m \ge \delta(K)} \{\pi_0 (\mathcal{F}_m (K))= \Z\}.$$
In other words, $M(K)$ measures the minimal distance between the diagrams of minimal complexity in $\reid$.
In particular, $M(K) = 0$ if and only if a knot type is simple.
\end{defi}

Recalling the proof of Lemma \ref{lemma:pochi}, we can also define another filtration on $\reid$:
\begin{defi}
Let $\widetilde{\mathcal{F}}_K$ be the filtration of $\reid$ whose $m$-level consists of the vertices of $\reid$ with valence less or equal to $m$.
Let also $$g_K : \N \longrightarrow \N$$ be defined as the associated counting function
$$g_K(m) = \#\{ D \in \reid \;|\; v(D) \le m\} .$$
Clearly $g_K(m) = 0 \; \forall m < \delta(K)$, and $g_K(\delta(K)) = \# \delta(K)$.
\end{defi}

Both these filtrations $\mathcal{F}$ and $\widetilde{\mathcal{F}}$, together with the associated integer valued counting functions $f_K$ and $g_K$ are knot invariants, and it is not hard to show that they both distinguish the unknot. Moreover one can consider the homology groups of the various level sets and obtain yet other knot invariants.\\

In \cite{miyazawa2012distance} Miyazawa computes the homology groups of the \emph{Reidemeister complex}, which he denotes by $ M(K:P_5,1)$, in the case of oriented diagrams with a minimal generating set of Reidemeister moves. Along these lines we can define a slightly different version of Reidemeister complex, denoted by $\mathcal{CG}(K)$  and defined as follows: a $n$-simplex $\Delta_n = \langle D_0, \ldots, D_n\rangle$ is given by a string of $n+1$ distinct diagrams such that\footnote{Here $\delta_{i,j}$ denotes Kronecker's delta function, and $d$ is the path distance.} $d(D_i,D_j) = 1 -\delta_{i,j}$, considered up to permutations of the indices.

Define $C_n(\mathcal{CG}(K))$ as the free abelian group generated by $n$-simplices, with the obvious boundary operator induced by simplicial homology:
\begin{equation}
\partial (\langle D_0, \ldots, D_n\rangle) = \sum_{i=0}^n (-1)^i \langle D_0, \ldots, \widehat{D_j},\ldots, D_n\rangle.
\end{equation}
From this perspective, $\sfera $ is the $1$-skeleton of $\mathcal{CG}(K)$. Miyazawa proved that $H_0(M(K:P_5,1);\Z) = \Z$ (which follows from Reidemeister's Theorem), and that $H_n(M(K:P_5,1);\Z) = 0$ for every $n\ge 2$ and $K \in \nodi$.\\ Our situation is slightly different; with the methods developed in Section \ref{sec:grafo} we can easily enstablish the triviality of $H_n(\mathcal{CG}(K);\Z)$ for $n\ge 3$:
\begin{prop}
For any knot $K$ we have $H_n (\mathcal{CG}(K);\Z) = 0$ for $n\ge 3$ (in both the planar and $S^2$ case).
\end{prop}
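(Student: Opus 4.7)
The plan is to show that $\mathcal{CG}(K)$ contains no $3$-simplex; since any higher-dimensional simplex has $3$-dimensional faces, this forces $C_n(\mathcal{CG}(K)) = 0$ for every $n \ge 3$ and hence the homology vanishes in those dimensions automatically. Both the planar and $S^2$ versions are covered simultaneously, since Theorem \ref{prop:lung3} classifies the triangles in either graph.

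I would proceed by contradiction: suppose $D_0, D_1, D_2, D_3 \in \diag$ are pairwise at distance one. Each of the four triples then spans a triangle and, by Theorem \ref{prop:lung3}, every such triangle consists of exactly one \emph{long} edge (a $\Rs$ move, or a $\Rd$ move in the sporadic unknot case) together with two $\Ru$ edges of opposite sign. A double count finishes the combinatorics: each of the $6$ edges of the tetrahedron lies in exactly $2$ of the $4$ faces, so the faces collectively contribute $4$ incidences with long edges, forcing the tetrahedron to contain exactly $2$ long edges. These two edges cannot share a vertex, for then the face through that vertex containing both would carry two long edges, violating the classification. Hence the long edges form a perfect matching, which after relabelling may be taken to be $D_0 D_1$ and $D_2 D_3$, while the remaining four edges are all $\Ru$.

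The final ingredient is a crossing-number obstruction. Write $c_i = cr(D_i)$. Long edges change $cr$ by $\pm 2$ and $\Ru$ edges by $\pm 1$, so $|c_0 - c_1| = |c_2 - c_3| = 2$ while $|c_i - c_j| = 1$ for the four ``mixed'' pairs. The conditions $|c_0 - c_2| = |c_1 - c_2| = 1$ together with $|c_0 - c_1| = 2$ pin down $c_2 = (c_0 + c_1)/2$, and the identical computation forces $c_3 = (c_0 + c_1)/2$; therefore $c_2 = c_3$, contradicting $|c_2 - c_3| = 2$.

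The only point requiring some care is that for the unknot the sporadic triangles contain a $\Rd$ rather than a $\Rs$ as their long edge, but this is harmless since both move types change the crossing number by $\pm 2$. There is no real obstacle beyond being disciplined about (i) the two cases of how the two long edges could sit in the tetrahedron and (ii) keeping track that Theorem \ref{prop:lung3} really rules out every other triangle shape in both $\reid$ and $\sfera$.
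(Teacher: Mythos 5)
Your proof is correct and is essentially the paper's argument: the paper likewise uses Theorem \ref{prop:lung3} to force every face of a hypothetical $3$-simplex to be an $\Rs$-$\Ru$-$\Ru$ triangle, reduces (up to symmetry) to the single candidate configuration you isolate by double counting (the two $\Rs$/$\Rd$ edges forming a perfect matching, Figure \ref{fig:notetra}), and excludes it because the signs of the moves cannot be chosen coherently. That sign obstruction is exactly your crossing-number contradiction, since the sign of each move records whether the crossing number increases or decreases; your write-up just makes the reduction and the final inconsistency explicit.
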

\begin{proof}
Assume there exists a tetrahedron $\Delta_3$ in $\mathcal{CG}(K)$; then it follows from Theorem \ref{prop:lung3} that all faces have to be made up triples $\Omega_T^\pm$-$\Omega_1^\mp$-$\Omega_1^\mp$. Up to symmetries, there is only one possibility to be considered, shown in Figure \ref{fig:notetra}. However this can be excluded as well, by taking into account the signs of the moves composing the tetrahedron. In particular this shows that there are no simplices of dimension $n \ge 3$, hence all the corresponding homology groups vanish.
\begin{figure}
\includegraphics[width=5cm]{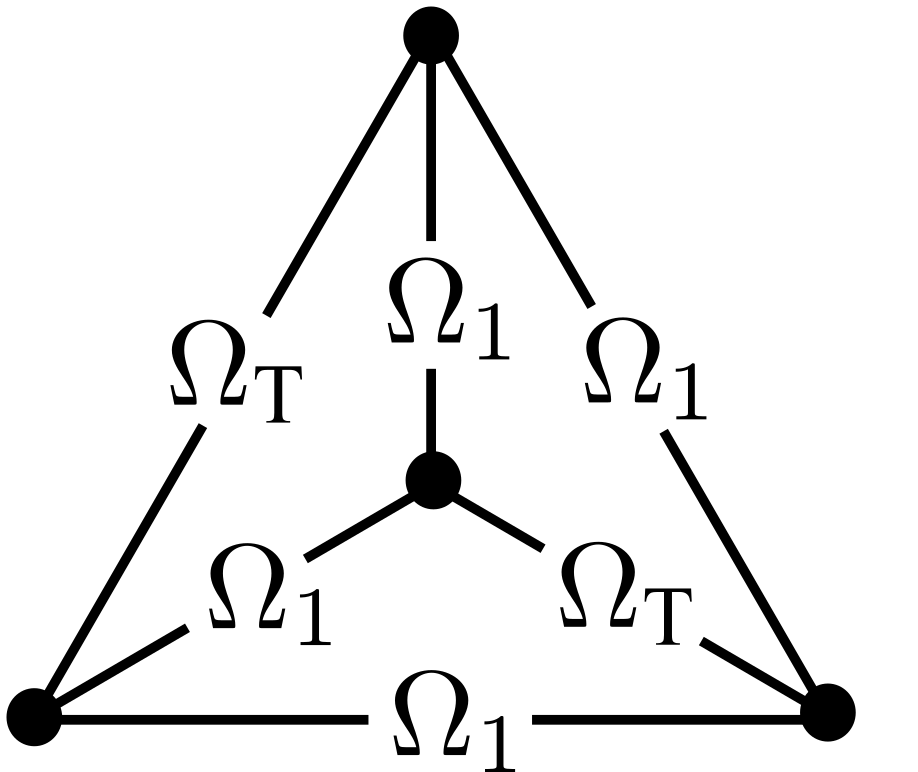}
\caption{The only tetrahedron with compatible faces. There is no way of coherently orienting the signs on the edges of its faces.}
\label{fig:notetra}
\end{figure}
\end{proof}
In particular it follows that $\mathcal{CG}(K)$ is just $\sfera$ with all triangles capped by $2$-simplices. It is not hard to prove that $H_1 (\mathcal{CG}(K);\Z)$ is an infinitely generated free abelian group, as any pair of distant $\Omega_3$-$\Omega_2$ moves does not bound any union of $2$-simplices. We can now conclude the computation of the homology groups of $\mathcal{CG}(K)$ with the next proposition.
\begin{prop}
The following holds:
\begin{equation}
H_2(\mathcal{CG}(K);\Z) \cong \Z^\infty .
\end{equation}
\end{prop}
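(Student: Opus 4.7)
Since by the previous proposition $C_n(\mathcal{CG}(K)) = 0$ for every $n \ge 3$, we have
\[
H_2(\mathcal{CG}(K);\Z) \;=\; \ker \partial_2 \,/\, \operatorname{im} \partial_3 \;=\; \ker \partial_2.
\]
As a subgroup of the free abelian group $C_2(\mathcal{CG}(K))$, the kernel $\ker \partial_2$ is automatically free abelian (subgroups of free abelian groups are free abelian, even at infinite rank), so the task reduces to showing that $\ker \partial_2$ has infinite rank.

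By Theorem \ref{prop:lung3}, every $2$-simplex of $\mathcal{CG}(K)$ is an $\Omega_T$-$\Omega_1$-$\Omega_1$ triangle, and hence the image of $\partial_2$ only involves $\Omega_T$- and $\Omega_1$-edges. A $2$-cycle is therefore a $\Z$-linear combination of such triangles in which each $\Omega_T$- and each $\Omega_1$-edge appears with total coefficient zero. Since a generic $\Omega_T$-edge lies on only one triangle, such cancellations become possible only after working with more symmetric configurations, such as the overlapping curls and height-$1$ tentacles appearing in Figures \ref{fig:configrmultiple} and \ref{fig:tentacletriangle}, in which several $\Omega_T$-$\Omega_1$-$\Omega_1$ triangles share common edges.

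To produce a first nonzero $2$-cycle I would search, inside the link of a diagram realizing such an overlapping-tentacle configuration, for a subcomplex combinatorially isomorphic to a closed triangulated surface whose triangles are all $\Omega_T$-$\Omega_1$-$\Omega_1$. Since the previous proposition forbids tetrahedral subcomplexes, any such surface must have at least six triangles, fitting together as a bipyramid, octahedron, or similar structure, and glued along their $\Omega_T$- and $\Omega_1$-edges; the resulting $2$-chain is the desired cycle $c$. Once such a $c$ is exhibited near some diagram $D$, infinitely many $\Z$-linearly independent $2$-cycles $c_1, c_2, \ldots$ are obtained by repeatedly applying $\Omega_T^+$ moves to $D$ in regions of the underlying projection disjoint from the support of $c$: the cycles $c_n$ have supports in pairwise disjoint ranges of crossing numbers, and since the crossing number can be read off from the graph by Theorem \ref{distinguemosse}, the resulting family $\{c_n\}$ is automatically $\Z$-linearly independent, witnessing $H_2(\mathcal{CG}(K);\Z) \cong \Z^\infty$.

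The main obstacle is the content of the third paragraph: because of the rigid form of the $\Omega_T$-$\Omega_1$-$\Omega_1$ triangles, the fact that a generic $\Omega_T$-edge sits on only one triangle, and the non-existence of $3$-simplices, locating even a single closed-surface subcomplex of $\mathcal{CG}(K)$ requires a careful combinatorial analysis of the shared-edge structure among the triangles attached to multi-tentacle configurations.
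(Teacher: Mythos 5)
Your reduction is fine: since there are no simplices of dimension $\ge 3$, $H_2(\mathcal{CG}(K);\Z)=\ker\partial_2$, which is free abelian as a subgroup of a free abelian group, so it suffices to produce infinitely many $\Z$-independent $2$-cycles; and your replication step (repeating a local configuration in disjoint parts of the diagram, far away from the support of a given cycle) is exactly the mechanism the paper uses to pass from one cycle to infinitely many. But the heart of the statement --- the existence of even one nonzero $2$-cycle --- is not proved in your proposal: your third paragraph is a description of where you would \emph{search} for a closed triangulated surface made of $\Omega_T$-$\Omega_1$-$\Omega_1$ triangles, and you say yourself in the final paragraph that locating such a subcomplex is the main outstanding obstacle. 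As written, the argument is a plan plus a correct bookkeeping step, not a proof; if no such closed surface existed, everything you wrote would still be consistent, so the conclusion does not follow.

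The paper closes precisely this gap by an explicit construction: the two configurations in the top part of Figure \ref{fig:configrmultiple}, arising from multiple curls or height-$1$ tentacles on the same arc, give a topologically embedded $2$-sphere in $\mathcal{CG}(K)$ (a closed union of $\Omega_T$-$\Omega_1$-$\Omega_1$ triangles sharing edges, which is a cycle and cannot bound since there are no $3$-simplices), and such local configurations can be inserted infinitely many times into diagrams of any knot. So to complete your argument you should replace the search plan by an actual verification on one concrete multi-curl/tentacle configuration, listing the diagrams involved, checking that the relevant pairs are at distance $1$ so the triangles are genuinely simplices of $\mathcal{CG}(K)$, and checking that the triangles glue into a closed surface; after that your disjoint-support replication (the crossing-number argument via Theorem \ref{distinguemosse} is more than you need, since $\operatorname{im}\partial_3=0$ makes independent cycles independent in homology) finishes the proof along the same lines as the paper.
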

\begin{proof}
The two configurations in the top part of Figure \ref{fig:configrmultiple} show a topologically embedded $2$-sphere in $\mathcal{CG}(K)$. As these are local configurations, they can be found infinitely many times on any $\sfera$.
\end{proof}

So, as in Miyazawa's case, the global homology does not provide useful invariants. Nonetheless, some properties of the diagrams can be inferred from the local homology of the complexes.
Denote by $S^{cpx}(D)$ the ball of radius $1$ centered in $D$, seen as a subcomplex of $\mathcal{CG}(K)$. 
\begin{lemma}
If $H_1 (S^{cpx}(D);\Z) = \Z^m$, then $m \ge \widehat{cr}(D)$. Moreover $H_2(S^{cpx}(D);\Z)=0 $ if and only if $p_1(D)=0$.
\end{lemma}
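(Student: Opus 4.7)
The strategy will rest on Theorem \ref{prop:lung3}: every 3-cycle in $S(D)$ is a triangle of type $\Omega_T^{\pm}$-$\Omega_1^{\mp}$-$\Omega_1^{\mp}$ and is therefore filled by a unique 2-simplex of $\mathcal{CG}(K)$. It follows that the non-trivial classes in $H_1(S^{cpx}(D);\Z)$ must come from the 2-gons produced by multi-edges inside $S^{cpx}(D)$, whereas the non-trivial elements of $H_2(S^{cpx}(D);\Z)$ can arise only when several 2-simplices glue along their boundaries into closed surfaces.

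For the inequality $m \ge \widehat{cr}(D)$, the plan is to exhibit, for each orbit of crossings of $D$ under the symmetries of the diagram (and there are exactly $\widehat{cr}(D)$ of them, since this is precisely $\tfrac{1}{8}\#\Omega_1^+(D)$), a specific 2-gon cycle in $S^{cpx}(D)$. Near a chosen crossing $c$ of $D$, the configuration in the lower part of Figure \ref{fig:multiedge1} shows that an $\Omega_2^+$ move can be realised in two inequivalent ways giving the same target diagram, producing a 2-multi-edge and hence a 2-gon. To argue that these 2-gons are pairwise non-homologous I would appeal to the Hass-Nowik invariant $I_{lk}$ recalled in Section \ref{sec:grafo}: the $\Omega_2^+$ moves at distinct crossings contribute distinct monomials of the form $X_n+Y_n$ or $X_n+Y_{n+1}$ to $I_{lk}$, so the target neighbours of $D$ are pairwise distinct and the 2-gons cannot be combined into the boundary of any 2-chain.

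For the equivalence in the second part I would treat the two directions separately. Assume first $p_1(D)=0$; by Lemma \ref{lemmanormale} every 2-simplex in $S^{cpx}(D)$ is a normal triangle localised on a single arc of $D$, and each of them has a distinguished $\Omega_T$-edge contained in exactly two 2-simplices (the two $\Omega_1^{\pm}$ completions). An inductive analysis of $\partial_2$ along the arcs of $D$ will then force every 2-cycle to vanish, giving $H_2(S^{cpx}(D);\Z)=0$. Conversely, if $p_1(D)\ne 0$, a pre-existing curl on $D$ allows the configuration displayed in the top part of Figure \ref{fig:configrmultiple} to be realised inside $S^{cpx}(D)$: two distinct 2-simplices share the same 3-vertex boundary cycle, via the multi-edge induced by the curl, and together bound an embedded 2-sphere, producing a non-trivial class in $H_2$.

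The main obstacle will be establishing the linear independence of the 1-cycles in the periodic regime, since crossings in the same orbit may a priori yield homologous 2-gons. This is precisely why $\widehat{cr}(D)$ replaces $cr(D)$ in the bound, and the reduction to orbit representatives can be handled along the lines of the symmetry argument used in the proof of Theorem \ref{distinguemosse} via Lemma \ref{lemma:miniprop}.
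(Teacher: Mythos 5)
Your overall route coincides with the one the paper intends (the crossing-local $\Omega_2$ multi-edges of Figure \ref{fig:multiedge1} for the first claim, Lemma \ref{lemmanormale} and Theorem \ref{prop:lung3} for the second), but two of your steps do not hold as written. For $m\ge\widehat{cr}(D)$, the Hass--Nowik argument is both unjustified and beside the point: nothing prevents $\Omega_2^+$ moves at different crossings from contributing the same monomial $X_n+Y_n$, and distinctness of the target vertices is not what is at stake anyway (coincident targets would only produce a multi-edge of higher multiplicity, giving at least the same rank). The ingredient you actually need, and never state, is that these particular $2$-gons cannot be capped by $2$-simplices. That is not true of an arbitrary multi-edge: an $\Omega_T$ double edge from $D$ to the height-$1$-tentacle diagram (Figure \ref{fig:nonchiamarelefigureuguali}, right) is capped by the two triangles through the intermediate curl diagram, so that $2$-gon bounds in $S^{cpx}(D)$. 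What saves the $2$-gons of Figure \ref{fig:multiedge1} is Theorem \ref{prop:lung3}: for $K\neq\bigcirc$ every triangle is of type $\Omega_T$-$\Omega_1$-$\Omega_1$, so a genuine $\Omega_2$ edge lies in no $2$-simplex and no combination of these $2$-gons can be a boundary.

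The second half contains the more serious gaps. For $p_1(D)=0\Rightarrow H_2=0$, your count is wrong: the $\Omega_T$ edge of a normal triangle is contained in exactly \emph{one} $2$-simplex, not two, since a height-$1$ tentacle can be undone by $\Omega_1^-$ moves in only one order (its bottom crossing does not bound a $1$-region) and $p_1(D)=0$ forbids completions using an $\Omega_1^-$ out of $D$; it is precisely this ``exactly one'' that forces every coefficient of a $2$-cycle to vanish, whereas with your ``exactly two'' the promised inductive analysis of $\partial_2$ is not an argument and would still have to be supplied. In the converse direction the construction fails outright: two distinct triangle $2$-cells sharing the same three vertices but attached along different edges of a multi-edge do not form a closed surface --- the boundary of their difference is the $2$-gon of the two differing edges, not zero --- and in a strictly simplicial reading two distinct $2$-simplices on the same vertex set do not even exist. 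The spheres the paper points to (top of Figure \ref{fig:configrmultiple}) consist of at least four $2$-cells: two different cappings of one and the same $2$-gon through two different intermediate vertices, exactly the non-normal triangle configurations that a pre-existing curl produces in the proof of Lemma \ref{lemmanormale}. Your two-simplex sphere has to be replaced by such a configuration for the ``only if'' direction to stand.
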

The first part of this lemma follows easily from \ref{fig:multiedge1}, while the second is a consequence of Lemma \ref{lemmanormale} together with Theorem \ref{prop:lung3}.

\section{Completeness of the \texorpdfstring{$S^2$}--graph invariant}\label{s:towards}

This section is devoted to the proof of Theorem \ref{thm:completeness} (recalled below). The proof will rely solely on results from Section \ref{local}, and exploits rather large portions of the graph.

\begin{thm*}
The $S^2$-Reidemeister graph is a complete knot invariant up to mirroring. That is  $\sfera \equiv \mathcal{G}_{S} (K^\prime)$ \emph{iff} $K^\prime = K$ or $\overline{K}$.
\end{thm*}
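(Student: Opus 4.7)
The backward direction is immediate: the isomorphism type of $\mathcal{G}_S(\cdot)$ is a knot invariant and is unaffected by mirroring, as observed after the definition of the Reidemeister graph.

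For the forward direction, the plan is to prove the stronger reconstruction statement of Proposition \ref{distinguediagramma}: a diagram $D$ of $K$ can be recovered, up to a simultaneous global swap of all its crossings, purely from the isomorphism type of a sufficiently large ball in $\mathcal{G}_S(K)$ centered at $D$. Once this is established, any isomorphism $\phi: \mathcal{G}_S(K) \to \mathcal{G}_S(K')$ must send a vertex $D$ to a vertex $\phi(D)$ representing a diagram combinatorially identical to $D$ up to mirror, forcing $K' \in \{K,\overline{K}\}$.

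The reconstruction splits into two stages. First, recover the projection of $D$, namely the underlying $4$-valent graph embedded in $S^2$. Select a vertex $D$ with $p_1(D)=0$; such vertices are intrinsically detectable via Lemma \ref{lemmanormale}, which characterises them by the property that every triangle in $S(D)$ is normal. By Theorem \ref{distinguemosse} every edge of $S(D)$ is labelled by its Reidemeister type and the crossing number of every diagram in a neighborhood of $D$ is known. With these labels in hand, the $8\cdot cr(D)$ outgoing $\Omega_1^+$ edges partition by arc-plus-side of attachment, since a normal $\Omega_T^-$-$\Omega_1^+$-$\Omega_1^+$ triangle groups exactly the two $\Omega_1^+$ moves sharing an arc and side. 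The remaining task is to match opposite-side groupings on the same arc through their shared $\Omega_T^+$ partners, and to detect which arcs meet at a common crossing through longer combinatorial signatures in $S(D)$. For each arc so identified, the clean positive-valence formulas \eqref{eqn:r1+}, \eqref{eqn:changesr2+}, \eqref{eqn:changer2+}, \eqref{eqn:changer2bis+} of Remark \ref{rmk:valenzapositiva}, applied to the $\Omega_1^+$, $\Omega_T^+$ and $\Omega_2^+$ neighbors incident to that arc, give an overdetermined linear system in the sizes $a,b$ of the two regions adjacent to the arc; this system pins down $(a,b)$ uniquely. Assembling these data yields the cellular decomposition of $S^2\setminus D$ and hence the projection.

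In the second stage one recovers the sign at each crossing, up to a global simultaneous flip. For a fixed projection, the admissibility and count of $\Omega_3$, $\Omega_2^-$ and $\Omega_T^-$ moves near a crossing probe the over/under data locally: an $\Omega_3$ move in a $3$-region is admissible only for specific crossing patterns, and likewise for the descending bigon moves. Since the graph detects and counts these moves (Theorem \ref{distinguemosse}), the relative signs at different crossings are forced; the single unremovable ambiguity is the global swap producing $\overline{D}$. The principal obstacle is the first stage: making the arc-grouping and region-size extraction canonical in the presence of multi-edges, periodic neighbors, and coincidences among region sizes. The triangle classification of Theorem \ref{prop:lung3}, the non-periodicity certificate provided by Lemma \ref{lemma:miniprop}, and the bounded-$\varepsilon$ estimates of Propositions \ref{prop:changer1}--\ref{prop:changer2bis} are exactly the tools for this: restricting attention to a $D$ whose entire ball is certified non-periodic turns the positive-valence relations into exact equalities and reduces the recovery of the projection to a finite combinatorial problem.
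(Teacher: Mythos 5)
Your skeleton (first recover the projection of a vertex with $p_1=0$, then recover the crossings up to a global flip, then conclude) is the paper's strategy, and the backward direction is fine; but both stages have genuine gaps. In the first stage you propose to extract the projection from $S(D)$ together with ``longer combinatorial signatures in $S(D)$'' and an overdetermined system built from Equations \eqref{eqn:r1+} and \eqref{eqn:changesr2+}. That data at best recovers the multiset $P(D)$ of region sizes (this is exactly Proposition \ref{prop:determinaP}), plus the pair of region sizes seen by each arc; it does not tell you how the regions are glued into a $4$-valent graph when several regions have equal size, and even the abstract dual graph would not suffice, since one also needs its embedding in $S^2$ (a rotational system). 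This is precisely why the paper's Proposition \ref{riconosceproj} must leave $S(D)$: it builds \emph{sparse} diagrams by stacking $N_1\gg N_2\gg\cdots$ curls on the edges so that all relevant regions acquire pairwise distinct and widely separated sizes (making adjacency readable from valence differences while keeping every intermediate $P(D_i)$ computable), and then reconstructs a coherent rotational system for the dual by controlled loops of $\Omega_2^+$ moves around each region (Figures \ref{fig:sistorient} and \ref{fig:sistciclico}). You flag ``coincidences among region sizes'' as an obstacle, but the tools you invoke (Theorem \ref{prop:lung3}, Lemma \ref{lemma:miniprop}, the $\varepsilon$-bounds) do not remove it; the sparsification construction and the embedding recovery are missing ideas, not finishing touches.

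The second stage fails as stated. You want to read the over/under data at each crossing from the counts of $\Omega_3$, $\Omega_2^-$ and $\Omega_T^-$ moves nearby. For a hard diagram all of these counts are identically zero, and the paper points out both that every knot admits hard diagrams and that a hard diagram has the same such data as every diagram obtained from it by arbitrary crossing changes; so these local counts cannot force the relative signs, and nothing in your argument lets you avoid such vertices. The paper's Proposition \ref{distinguediagramma} argues differently and globally: assuming $\mathcal{G}_S(K)\equiv\mathcal{G}_S(K')$, a $p_1=0$ vertex and its image carry the same projection; if the diagrams differ in some but not all crossings there is a pair of crossings as in Figure \ref{fig:lacciostrano}, and a specific $\Omega_2^+$--$\Omega_3$--$\Omega_2^-$ path (made well defined on the graph by Proposition \ref{riconosceproj}) can be completed in two ways on one side but only one way on the other, contradicting the isomorphism. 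Some comparative argument of this kind, exploiting the already-recovered projections along short paths between the two putative graphs, is needed in place of your local sign-probing step.
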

We will prove the Theorem by breaking it down in smaller parts, which are the content of the following Propositions.

Suppose we have a knot $K \in \nodi$, and that $D \in \diag$ is any diagram. Write $P(D) = (p_1(D), \ldots, p_m(D))$ where $m$ is the greatest coefficient with a non-zero entry (or equivalently the maximal number of sides among the regions in the complement of $D$ in $S^2$).

\begin{prop}\label{prop:determinaP}
The $S^2$-graph of a knots determines $P(D)$ for each vertex $D$ such that all diagrams in $S(D)$ are non-periodic and $p_1(D)=0$.
\end{prop}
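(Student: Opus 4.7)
The plan is to read each $p_k(D)$ from $S(D)$ by combining Theorem~\ref{distinguemosse} (which labels Reidemeister types on edges and supplies $cr(D)$) with Propositions~\ref{prop:changer1} and~\ref{prop:changesr2} (which express valence changes in terms of region sizes), organised via the triangles based at~$D$.

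First I would apply Theorem~\ref{distinguemosse} to label every edge of $S(D)$ by its move type, extract $cr(D)$ and hence $\alpha(D)=2\,cr(D)$, and tabulate $\#\Omega_j^-$ and $\#\Omega_3$ at $D$ and at every neighbour. Since $p_1(D)=0$, Lemma~\ref{lemmanormale} forces every triangle at $D$ to be normal, i.e.\ composed of one $\Omega_T^+$ edge and one $\Omega_1^+$ edge acting on the same arc and the same side. A direct check using the alternating-sign pattern of height-$1$ tentacles, together with the strict locality of $\Omega_1^+$ moves, shows that each $\Omega_1^+$ and each $\Omega_T^+$ edge at $D$ belongs to \emph{exactly} one triangle, with the paired edges sharing a common arc-side of~$D$.

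Next, for each such pair $(e_1,e_T)$, let $\Delta v_1,\Delta v_T$ denote the valence changes and let $\sigma_1:=\sum_j\varepsilon_{1,j}$, $\sigma_T:=\sum_j\varepsilon_{T,j}$ be the $\varepsilon$-sums, all extracted from the data collected above. Writing $s_1,s_2$ for the sizes in $D$ of the two polygons adjacent to the arc, with $s_1$ on the side where the moves act, and translating the quantities $a,b$ of Propositions~\ref{prop:changer1} and~\ref{prop:changesr2} via $a=s_1+2$, $b=s_2+1$ (for $\Omega_1^+$) and $a=s_1+4$, $b=s_2+1$ (for $\Omega_T^+$), one obtains
\[
\Delta v_1 = 18 + 4s_1 + 2s_2 + \sigma_1, \qquad \Delta v_T = 46 + 8s_1 + 2s_2 + \sigma_T,
\]
from which $s_1 = \tfrac{1}{4}\bigl[(\Delta v_T - \Delta v_1) - (\sigma_T-\sigma_1) - 28\bigr]$. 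Each triangle at $D$ therefore records the size of the polygon on its arc-side.

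Finally, let $N_k$ count the triangles at $D$ for which the computed value is $s_1=k$. Each arc-side contributes exactly two such triangles, both yielding the same $s_1$, and each $k$-region of $D$ is the active polygon on precisely $k$ arc-sides; hence $N_k = 2k\,p_k(D)$, and $p_k(D)=N_k/(2k)$. Together with the hypothesis $p_1(D)=0$, this reconstructs $P(D)$. The step I expect to be the main obstacle is the canonical pairing of $\Omega_1^+$ with $\Omega_T^+$ edges via triangles: Lemma~\ref{lemmanormale} supplies only the normality of triangles, so establishing uniqueness of the partner requires handling separately the alternating-sign geometry of a height-$1$ tentacle and ruling out would-be triangles arising from $\Omega_1^+$ moves that non-locally surround an existing curl.
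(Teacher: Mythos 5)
Your proposal is correct and follows essentially the same route as the paper's proof: use Theorem \ref{distinguemosse} to label moves and read $cr(D)$, use $p_1(D)=0$ with Lemma \ref{lemmanormale} so that every triangle at $D$ is normal and pairs each $\Omega_1^+$ edge with its $\Omega_T^+$ partner, then extract the size of the region on the active side from the valence-change formulas and recover $p_k(D)=N_k/(2k)$. The only (cosmetic) difference is that the paper works with the positive valence $v^+$, which depends only on the projection and so makes the $\varepsilon$-sums drop out, whereas you keep full valences and subtract $\sigma_1,\sigma_T$ computed by tabulating move types at the neighbouring vertices; both are legitimate under the hypothesis that all of $S(D)$ is non-periodic.
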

\begin{proof}
Thanks to Lemma \ref{lemmanormale} we know that if a diagram $D$ does not contain any curls, then all the triangles in $S(D)$ which admit $D$ as the vertex with lower crossing number are normal. Moreover, since there are $8cr(D)$ $\Omega_1^+$ moves and $\Omega_T^+$ moves, we can conclude that in $S(D)$ there are exactly $8cr(D)$ triangles.
For each $\Omega_1^{+}$ move, choose the corresponding $\Omega_T^+$ move;
call $D^\prime$ and $D''$ respectively the diagrams obtained by performing these $\Omega_1^+$ and $\Omega_T^+$ moves on $D$. By Equations \eqref{eqn:r1+} and \eqref{eqn:changesr2+}, the difference of the positive valences is  $$v^+(D'') - v^+(D^\prime) = 4a + 20,$$
where $a$ is the number of edges of the region in which the tentacle and the curl will appear.

If we do the same for all possible $\Omega_1^+$ moves applicable to $D$, we get a set of numbers $\{n^\prime_i\}_{i \in \{0, \ldots ,\#\Omega_1^+(D)\}}$;  define a new set $\{n_i\}_{i \in \{0, \ldots ,\#\Omega_1^+(D)\}}$, where $n_i = \frac{n^\prime_i - 20}{4}$. 
Each region with $a$ sides contributes to this new list with exactly\footnote{Corresponding to the two possible $\Omega_1^+$ moves performed with the curl contained in the region on one of its edges.} $2a$ entries equal to $a$. It is thus immediate to show that we can compute each $p_a (D)$ from $S(D)$.
\end{proof}

However the knowledge of $P(D)$ on a subset of vertices does not immediately guarantee the completeness of $\sfera$. A priori there might be two distinct knots (up to mirroring), whose diagrams have the same number and types of Reidemeister moves, and such that their complement has the same number of regions. We first need to detect the structure of $D$ as a $4$-valent graph on $S^2$.

\begin{prop}\label{riconosceproj}
The $S^2$-graph recognises the projections of the knot corresponding to diagrams without $1$-regions.
\end{prop}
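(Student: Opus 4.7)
The plan is to reconstruct the projection of $D$ on $S^2$, viewed as a $4$-valent cell decomposition, from the data in $S(D)$. By Theorem \ref{distinguemosse}, every edge out of $D$ carries its Reidemeister type and sign, and $cr(D)$ is known; by Proposition \ref{prop:determinaP}, the face-size sequence $P(D)$ is determined. The remaining task is to recover the arcs of $D$, the arc-region incidence, and the cyclic order of arcs along the boundary of each region.

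Since $p_1(D)=0$, Lemma \ref{lemmanormale} guarantees that every triangle in $S(D)$ with $D$ as its lowest-crossing vertex is normal, hence canonically attached to an (arc $a$, region $R$ adjacent to $a$, sign) triple; there are $8\,cr(D)$ such triangles, four per arc. For each triangle, its associated region is read off from the valence jump along its $\Omega_T^+$ edge via Proposition \ref{prop:changesr2}, partitioning the triangles into groups of $2k$ attached to each region $R$ of size $k$. Within each such region these triangles split into $k$ same-arc pairs, detected by the fact that two $\Omega_1^+$ moves lie on the same arc in the same region precisely when their composite in a triangle is the unique height-$1$ tentacle sitting on that (arc, region) pair. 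To identify each arc with its second bordering region, I then pass to the $\Omega_T^+$-neighbour at the top of one such triangle and analyse its local ball: the $\Omega_1^+$ edges there whose underlying arc is a continuation of $a$ lie in the opposite region $R_2$, whose size and identity are exposed by a further valence change. This matches every (arc,$R_1$) class to a unique (arc,$R_2$) class, yielding the abstract list of arcs of $D$ together with the unordered pair of regions each borders.

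The final step is the cyclic order of the $|R|$ arcs around each region $R$. I examine each $\Omega_2^+$ edge emanating from $D$, which is attached to a specific region $R$ via the valence change in Proposition \ref{prop:changer2}, and which acts on an unordered pair $\{e_i,e_j\}\subset\partial R$. Performing the move replaces $R$ by a new $2$-region together with two sub-polygons whose sizes record the two cyclic distances from $e_i$ to $e_j$ inside $R$. By reading the altered face-size sequence on the neighbour obtained through each such edge, and combining it with the arc labels established in the previous step, I recover the cyclic distance between every pair of arcs of every region. This fixes the cyclic boundary word of each region, which together with the arc-region incidence reconstructs the $4$-valent plane graph structure on $S^2$, i.e.\ the projection of $D$.

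The main obstacle is expected to lie in the second step: the matching of (arc,$R_1$) with (arc,$R_2$) classes for each arc requires a delicate analysis of how an $\Omega_T^+$ move propagates information about its underlying arc into the surrounding ball, disentangling the various $\Omega_T^+$-neighbours of $D$ that may look combinatorially similar. Residual degeneracies (regions of equal size, distinct arc pairs of the same cyclic distance, etc.) should be resolvable by iterating the argument at auxiliary vertices reached via additional $\Omega_1^+$ or $\Omega_T^+$ moves, invoking Lemma \ref{lemma:miniprop} to avoid periodicity issues.
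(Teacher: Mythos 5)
There is a genuine gap, and it sits exactly where you flag it at the end. Your reconstruction repeatedly assumes that edges of $S(D)$ (and of nearby balls) can be canonically attached to specific regions, arcs, or arc-pairs of $D$: you partition the $8\,cr(D)$ normal triangles ``into groups of $2k$ attached to each region $R$ of size $k$'', you pair up triangles lying on the same arc, you match an (arc,$R_1$) class with an (arc,$R_2$) class, and you attach each $\Omega_2^+$ edge to a specific unordered pair of edges of a specific region. But the only quantitative information the graph gives you about such an edge is a valence (or positive-valence) difference, and by Propositions \ref{prop:changer1}--\ref{prop:changer2bis} this reveals only the \emph{sizes} $a,b,c,d$ of the regions involved, not their identity. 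In a typical diagram many regions share the same size (and many arc-pairs realize the same pair of cyclic distances), so none of these assignments is determined by the isomorphism type of the graph: two $\Omega_1^+$ edges made in distinct $k$-regions, or an $\Omega_2^+$ made between two different pairs of arcs with the same size data, are combinatorially indistinguishable from $D$'s viewpoint. Saying that such degeneracies ``should be resolvable by iterating the argument at auxiliary vertices'' is precisely the missing proof; note also that Lemma \ref{lemma:miniprop} only helps with periodicity, not with equal-size ambiguities.

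The paper's proof is organized around removing exactly this ambiguity before attempting any reconstruction: starting from $D$ with $p_1(D)=0$, it walks out of $S(D)$ and builds a \emph{sparse} diagram by stacking $N_1\gg N_2\gg\cdots$ identical curls on successive edges (tracking $P(D_i)$ at every step via the positive-valence jumps and the multi-edge structure), so that in the resulting diagram all relevant regions have pairwise distinct and widely separated numbers of edges. Only then is every subsequent move ``well defined on the graph'': the valence change pins down which regions it touches, the adjacency structure (the dual graph) can be read off, and a chain of $\Omega_2^+$ moves walked around each region produces a coherent rotation system for the dual, which determines the embedding and hence the projection. If you want to salvage your more direct local approach, you would have to supply an argument playing the role of this sparsification, i.e.\ an explicit graph-theoretic procedure that breaks the symmetry between same-size regions and same-profile arcs; without it, steps two and three of your plan do not go through.
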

\begin{proof}
Let us deal only with non-periodic knots for the moment, and come back to the periodic case afterwards.

Choose a vertex $D \in \diag$ with $p_1(D)=0$. This is possible by Theorem \ref{distinguemosse}, since the condition $p_1(D)=0$ is equivalent to the absence of $\Omega_1^-$ moves emanating from $D$.

To obtain the structure of $D$ as a graph, we need to be able to tell which regions are adjacent to one another in $S^2$, or in other words, we want to determine the dual graph of the projection.

We need to look for this information outside of $S(D)$; begin by assuming for the moment that $D$ only has one region $R$ with a certain number $k$ of edges (that is, $p_{k} (D) = 1$), and $k$ is such that there are no regions with $k \pm l$ sides, for any $l \leq L$ where $L$ is a suitably big integer.

To determine the number of edges of the regions adjacent to $R$, perform a $\Omega_1^+$ move on one edge of\footnote{This is in fact well defined on the graph, since the valence of the diagrams obtained in this fashion will be different from any other obtainable by making a $\Omega_1^+$ anywhere else.} $R$, so that the curl is contained in the interior of $R$. We can then compute the number of edges of the other region 
involved in the move as follows. The  $\Omega_1^+$ move is associated to a unique $\Omega_T^+$ move, connecting $D$ with a diagram $D''$ with which they form a triangle. By counting the difference of the positive valences between $D'$ and $D''$ 
and by using Equations \eqref{eqn:r1+} and \eqref{eqn:changesr2+} we can compute the number of edges of $R$. Once we have that, the difference in the positive valence between  $D$ and $D'$ gives us the number of edges of the other region involved. Moreover, note that knowing the number of edges in these two regions is enough to compute $P(D')$ from $P(D)$.

Repeating for all edges\footnote{Thanks to the hypothesis on $R$, we can recongise from the graph all the $\Omega_1^+$ which create a curl in $R$.} in $R$
we get the number of sides of each region which shares an edge with $R$.

From this last paragraph it follows that if we could find a diagram of $K$ such that, with the exception of regions with one side, 
all the regions have a different and sufficiently spaced number of sides, then we could infer how they are globally ``patched together'' to form the corresponding $4$-valent graph.

There is an easy way to achieve such a configuration in a controlled way. Start with a diagram $D$ with $p_1(D)=0$, and perform a $\Omega_1^+$ move; again by the previous line of thought\footnote{Since triangles in $S(D)$ are normal, we can compute $P(D)$ by considering the difference in the valence between diagrams reached by triangles. Moreover, by considering one triangle at the time, and using the differences in the positive valences between $D''$ and $D'$, and between $D'$ and $D$, we can compute the number of edges involved in the corresponding $\Omega^+_1$.} we can tell that the move has been made with the curl contained in a region with $a$ edges, which is adjacent to a region with $b$ edges (as in the top of Figure \ref{modificare}).\\
Call $D_1$ the diagram obtained; we can recover $P(D_1)$ from $P(D)$, since we know the number of edges of the regions involved.
There are only two possible choices to perform another $\Omega_1^+$ move on this new diagram, in such a way that a $\Ru$-multi-edge with valence $2$ is created (given by performing an identical $\Omega_1^+$ move on the left or right of the previous one).
We can repeat this process $N_1 \gg 0$ times, obtaining a new diagram $D_{N_1}$ with only one region with $a+2N_1$ edges, and a region with $b + N_1$ edges, and such that there is only one multi-edge of order $N_1$, and exactly $ N_1$ regions with $1$ edge.
Notice again that at each step we can recover $P(D_i)$ form $P(D_{i-1})$, since at each step we already know the number of edges of one region involved, and from the difference in the positive valence between $D_{i-1}$ and $D_i$ we can recover the second one. Eventually, we are able to compute $P(D_{N_1})$. 

Now we have a more complicated diagram with two distinguished adjacent regions; we can then iterate the process: choose another edge of the first region\footnote{Such that it is not on the top of the $\Omega_1^+$s we just made.} and make $N_2$ identical $\Omega_1$s, with $N_1 \gg N_2 \gg0$, in such a way that the curls are contained in the first region. Again, the first step is well defined, since such $\Omega_1^+$ moves are the only ones that reach diagrams whose positive valence 
is increased by approximately $8N_1$ and do not increase the multiplicity of the multi-edge.
From the second step on, the lack of periodicity (see Remark \ref{rmk:triangolinormali}) ensures that making a curl close to the previous one is the only way to create a $\Omega_1^+$-multi-edge. We still can recover $P(D_i)$ at every step.\\
We can fill up every edge of the region who once had $a$ edges in the same fashion, and then move to another region. If at each step we start making curls on an edge bounding two regions whose number of edges is different enough\footnote{This can be achieved by moving through adjacent regions. Notice that every time an edge is filled with $N_i$ curls, we can ``remember'' the number of edges of the regions involved, and the number of curls made.}, and if we keep track of the number of curls added, we are sure that the moves are well defined on the graph and that we can compute the $n$-tuples.

Notice that we need to choose the numbers $N_i$ incredibly big and suitably distant, with $N_i \gg N_{i+1}$ in order to avoid confusion and ultimately get a diagram $\widetilde{D}$ such that it has region sequence of the form $$P(\widetilde{D}) = (N, 0,\ldots,0,1,0\ldots,0,1,0,\ldots),$$ where $N = \sum_i N_i$, and the minimal gap between two non-zero entries is $\gg0$. Call a diagram with these properties \emph{sparse}. \\If the number of edges of the various regions are sufficiently spaced, then the previous claim applies\footnote{Notice that even if the $p_1(\widetilde{D}) \neq 0$, the previous claim applies anyway, thanks to the sparseness of $\widetilde{D}$ and to the fact that the only $1$-regions in $\widetilde{D}$ are the ones created by the construction.}, and we can explicitly see which regions are adjacent to one another. However thus far we have only determined the dual graph to the knot projection as an \emph{abstract graph}; 
we need a bit more work to find out the specific planar embedding of the dual, in order to get back the projection of $D_0$. 
It is well known that an abstract finite planar graph $G$, together with a \emph{rotational system}, uniquely determines an embedding of $G$ and thus $G^*$, which is the diagram projection we want. A local rotational system for a vertex $v \in G$ is just a choice of a cyclic order for the edges emanating from $v$. A rotational system for $G$ is such a choice for each $v \in G$, and it is said to be \emph{coherent} if all the local systems are coherently oriented\footnote{That is, given any two adjacent region in the (embedded) dual of $G$, the orientations induced on the common edges do not coincide, as in Figure \ref{fig:sistciclico}.}.\\
\begin{figure}[h!]
\includegraphics[width=11cm]{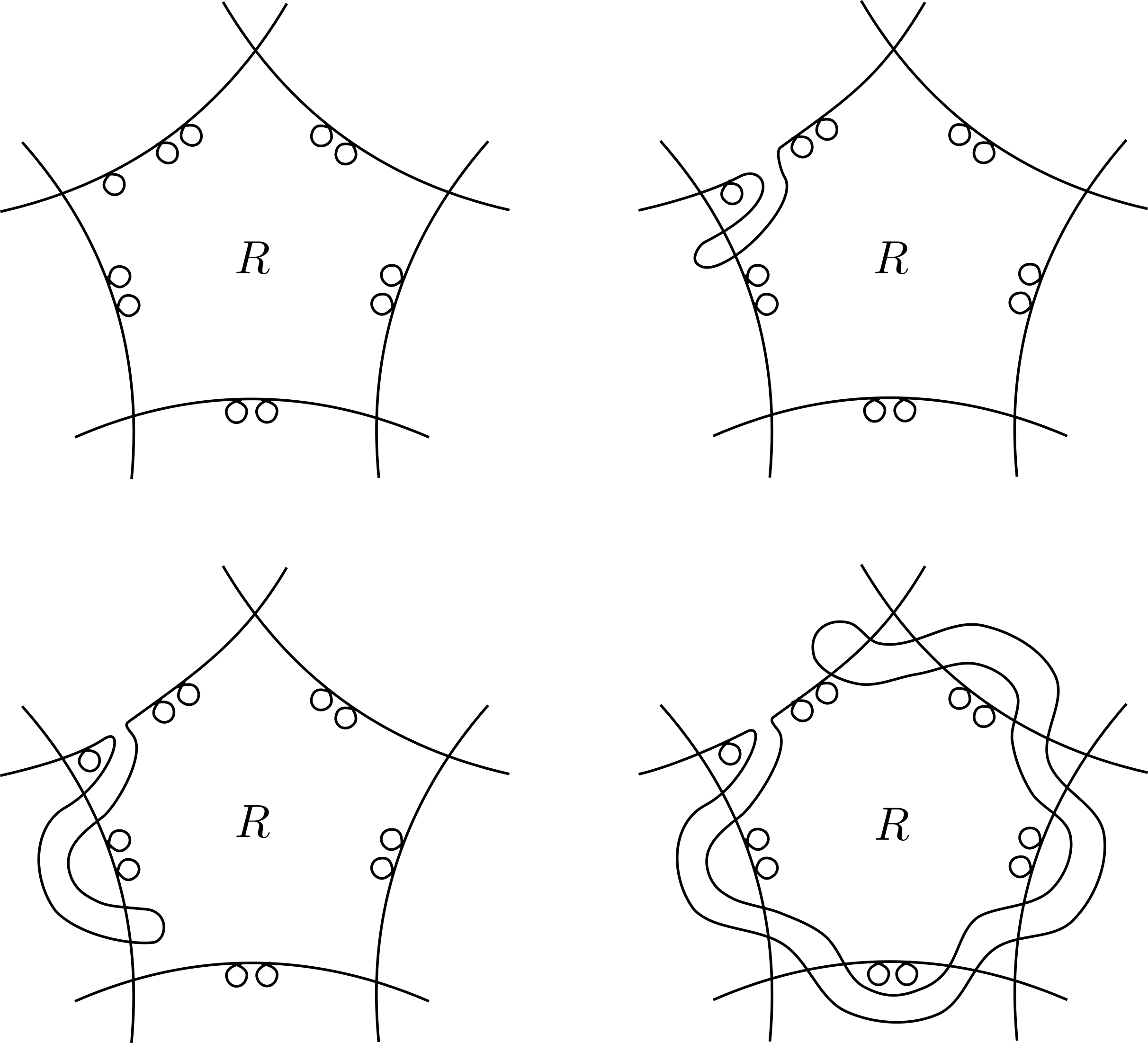}
\caption{The choice of a local orientation system for the dual. By the previous construction the curls on the arcs can be on either side of an edge.}
\label{fig:sistorient}
\end{figure}
Choose a region $R$ in the sparse diagram $\widetilde{D}$ and suppose that $R$ is bounded by $r$ edges.
Choose a $\Omega_2^+$ move that: creates a new bigon and a new $4$-region, 
and increases the number of edges of two regions adjacent to $R$ by $2$; one example is shown in the top-right part of Figure \ref{fig:sistorient}. Notice that this choice is well defined on the graph thanks to the sparseness of $\widetilde{D}$. 
Indeed, we know the numbers $r_i$ of edges of all the regions adjacent to $R$, and the $\Omega_2^+$-moves of that kind are the only ones that change the positive valence by a value of $ 56+ 4r_j + 4r_i -4r$ (as in Equation \eqref{eqn:changer2+}).

Then there are only two choices for a second $\Omega_2^+$ move that: creates a new bigon adjacent to $R$ leaving all the $1$-regions on the edge on the left of the newly created bigon, eliminates one bigon adjacent to one of the two regions whose vertices increased in the previous move (let us call this region $M$), and replaces it with a quadrilateral. Again, thanks to the sparseness of $\widetilde{D}$ we can identify such a move on the graph, by considering the valence of the diagram reached. In fact, we can identify all the $\Omega_2$ moves creating a new bigon adjacent to $R$ and eliminating the other bigon, since these are the only ones changing the valence by
\footnote{The following expression is valid if the $1$-regions are inside $R$. To consider the other case it is enough to change every $n_1$ in the expression with $2n_1$.} $40+ 4r -2n_1 r_j +2(n_1+2)(n_1-2)$ (as in Equation \eqref{eqn:changer2+}) where $n_1$ is the number of $1$-regions on the left of the newly created bigon. In order to detect the correct $\Omega_2$ moves it is sufficient to choose the one minimising the coefficient of $r_j$ in the previous expression\footnote{We can do that thanks to the sparseness of the diagram.}. 
These two options correspond to the possible choices of over/under passing for the first $\Omega_2^+$ move in Figure \ref{fig:sistorient}.
These moves might also decrease by a lot the number of edges of $M$ (according to how many curls are contained on the edge between $R$ and $M$). Now we can repeat the process following\footnote{We only need to follow moves that do not \emph{separate} curls lying on the same edge in different regions, and we can do that again using the difference in the valence together with the sparseness of the initial diagram.} Figure \ref{fig:sistorient}, until we get back to the first region that had its edges increased by $2$ in the first move (but was not $M$). Keeping track of the various regions encountered during this process allows to reconstruct a local orientation system about the vertex corresponding to $R$ in the dual graph
Since we know the numbers $r_i$ of edges of all the regions adjacent to $R$, and thanks to the sparseness of the diagram, this construction works even if two distinct edges of $R$ are shared with the same region.\\
Again, this sequence of $\Omega_2^+$ is only well-defined up to a choice of over/under passing at each step, but this indeterminacy does not affect the result.

Finally, in order to get a proper orientation system for the dual, we need to be able to have a coherent way of orienting these local rotational systems we obtained. The process is shown in Figure \ref{fig:sistciclico}.
\begin{figure}[h!]
\includegraphics[width=7cm]{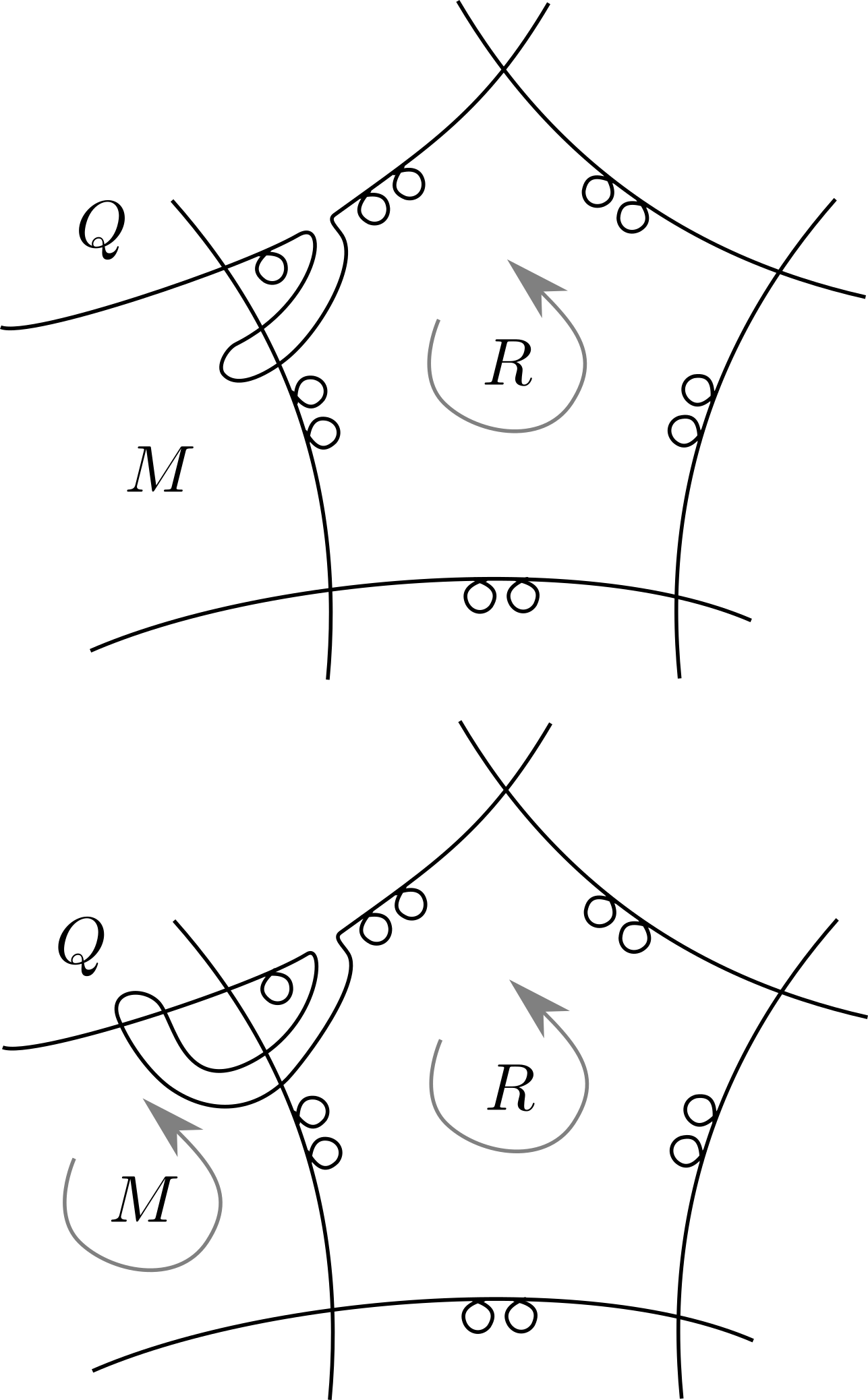}
\caption{How to choose a coherent cyclic ordering for the orientation system.}
\label{fig:sistciclico}
\end{figure}
Once we have made the first $\Omega_2^+$ move of Figure \ref{fig:sistorient} (and thus chosen a clockwise or counterclockwise orientation), there are only $2$ other $\Omega_2^+$ moves that increase the number of regions with $3$ sides by one and change the valence by approximately\footnote{Lower case letters denote the number of edges in the corresponding regions.} $4q -2m$. This move will also increase by $2$ the number of edges of the region denoted by $Q$; we are going to choose the only cyclic orientation based at the vertex in the dual, corresponding to the region $M$, that has $Q$ after $R$. Repeating this process for all regions produces a well-defined and coherent orientation system for each vertex in the dual graph, hence uniquely determines the embedding of the dual and consequently the knot projection.

Now suppose we have a periodic knot type $K$; in order to repeat the previous strategy we need to be able, for each $D \in \diag$ with $p_1(D)=0$, to find in a controlled way a sparse diagram $D^\prime \in \diag$, and a sufficiently large ball, centred in $D^\prime$, such that all diagrams in this ball are non-periodic. Since we can verify whether a diagram is periodic\footnote{As explained in the proof of Theorem \ref{distinguemosse}.}, by changing the order of the $\Omega_1^+$ multi-edges appearing in the previous construction, and/or their valence, we can achieve a sparse configuration with these properties.
\end{proof}

To conclude the proof, we only need to be able to say that the only possible knots sharing all projections without curls are mirrors of one another. 
\begin{prop}\label{distinguediagramma}
The $S^2$-graph $\sfera$ detects some diagrams of $K$ up to mirroring.
\end{prop}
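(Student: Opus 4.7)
The plan is to upgrade Proposition \ref{riconosceproj}, which recovers the knot projection $\pi(D)$ of every vertex $D$ with $p_1(D)=0$, to a reconstruction of the full diagram $D$ modulo the global $\Z/2$ symmetry of mirroring. Since $\sfera$ already determines the projection of $D$, what remains is to extract the over/under datum at each crossing.

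Fix a vertex $D$ with $p_1(D)=0$ together with its known projection $\pi(D)$. By Theorem \ref{distinguemosse} the edges of $S(D)$ are partitioned according to their move type, and each edge produces a local constraint on the crossings. An $\Omega_2^-$ edge at a bigon $B$ is available if and only if the two crossings on $\partial B$ are complementary along a common strand, so that the bigon is reducible rather than a clasp. An $\Omega_3$ edge at a 3-region $T$ is available if and only if the three crossings of $T$ form one of the braid-like configurations. Finally, an $\Omega_T^-$ edge at a bigon pins down additional tentacle structure, requiring alternating crossing signs. All of these conditions are directly readable from $\sfera$ through the combinatorics of $S(D)$.

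If the initial $D$ has too few bigons or 3-regions to pin down every crossing, I would enrich it by applying detectable sequences of $\Omega_2^+$ moves around each crossing $c$ of $\pi(D)$, in the spirit of the sparse-diagram construction from the proof of \ref{riconosceproj}. Each such move produces a new vertex $D'$, and $S(D')$ supplies further constraints; crucially, the two choices of over/under in a given $\Omega_2^+$ lead to distinct graph vertices whose neighbourhoods differ exactly by the crossing pattern installed. By choosing these enrichments carefully each crossing of $D$ can be arranged to appear in at least one constraint. Finally, fix one crossing $c_0$ and declare its sign arbitrarily (this is the mirror ambiguity), then propagate the constraints along a spanning structure of $\pi(D)$ to determine every other crossing from $c_0$. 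The periodic case is reduced to the generic one by first passing, via Lemma \ref{lemma:miniprop}, to a nearby non-periodic diagram on which the above analysis applies.

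The main obstacle is verifying that the local constraints furnished by the available $\Omega_2^-$, $\Omega_T^-$, and $\Omega_3$ edges (at $D$ and at its enriched neighbours) are globally sufficient to determine the crossings uniquely modulo the single global sign flip. A priori two distinct crossing assignments on $\pi(D)$ could yield the same catalogue of applicable moves in a small ball around $D$; ruling this out requires the sparse-diagram enrichment, which creates arbitrarily many auxiliary bigons adjacent to any prescribed crossing, together with a careful combinatorial bookkeeping that ensures each crossing participates in a rich enough family of local reducibility tests for the propagation of signs to be well defined and mirror-equivariant.
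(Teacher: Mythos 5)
Your overall strategy (first recover the projection via Proposition \ref{riconosceproj}, then pin down the crossings up to one global mirror flip using graph-detectable move sequences) is in the same spirit as the paper, but the decisive step is missing, and one of your stepping stones is essentially the statement to be proved. You assert that ``the two choices of over/under in a given $\Omega_2^+$ lead to distinct graph vertices whose neighbourhoods differ exactly by the crossing pattern installed''; this is exactly what needs an argument. Proposition \ref{riconosceproj} only returns the \emph{projection} of the new vertex, which is the same for both over/under choices, and the paper is explicit that an $\Omega_2^+$ is well defined on the graph only up to this choice -- its constructions are arranged so that the indeterminacy does not matter. Likewise, Theorem \ref{distinguemosse} lets you count the $\Omega_2^-$, $\Omega_T^-$ and $\Omega_3$ edges at $D$, but attributing each such edge to a specific bigon or $3$-region of $\pi(D)$ (which your ``local constraints'' require) is not automatic and needs the kind of valence bookkeeping the paper uses elsewhere. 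Finally, you yourself flag the sufficiency of these constraints, and the well-definedness of the sign propagation, as ``the main obstacle'' and leave it unresolved; but that obstacle \emph{is} the content of the proposition, so the proposal stops short of a proof precisely where the work lies.

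For comparison, the paper argues by contradiction at the level of two putatively isomorphic graphs $\sfera$ and $\mathcal{G}_S(K^\prime)$: by Proposition \ref{riconosceproj} the corresponding vertices with $\#\Omega_1^-=0$ share a projection, and if the two diagrams are neither equal nor mirrors, there is a pair of crossings on which they agree at one crossing and differ at the other (up to mirroring, Figure \ref{fig:lacciostrano}). One then runs a detectable $\Omega_2^+$-$\Omega_3$-$\Omega_2^-$ sequence across that pair and simply \emph{counts completions}: from one local configuration there are two distinct $\Omega_2^+$-$\Omega_3$ paths ending in diagrams with the prescribed projection, from the other there is only one (Figure \ref{fig:lacciostrano2}). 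This count is invariant under graph isomorphism (all projections along the way are controlled because $p_1=0$), so the two graphs cannot be isomorphic -- a contradiction. Note that this device never determines the sign of an individual crossing; it only detects a \emph{relative} discrepancy between two candidate crossing assignments, which is all that is needed modulo mirroring. Some concrete mechanism of this type is what your ``careful combinatorial bookkeeping'' would have to supply before the propagation argument can get off the ground.
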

\begin{proof}
Suppose we have two knots $K$ and $K^\prime$ sharing the same graph. Take a vertex $D$ of $\sfera$ such that $\#\Omega_1^-(D) = 0$. The corresponding vertex $D^\prime$ in the isomorphic graph $\mathcal{G}_{S}(K^\prime)$ will have the same knot projection as $D$ by the previous Proposition. Hence if $K \neq K^\prime$ the diagrams must differ in at least one crossing. If they differ in \emph{all} crossings, then $K^\prime$ is the mirror of $K$, and we are done. Otherwise there must be a pair of crossings that up to mirroring looks like the pair in the top part of Figure \ref{fig:lacciostrano} in $D$ and $D^\prime$.\\
\begin{figure}[h!]
\includegraphics[width=10cm]{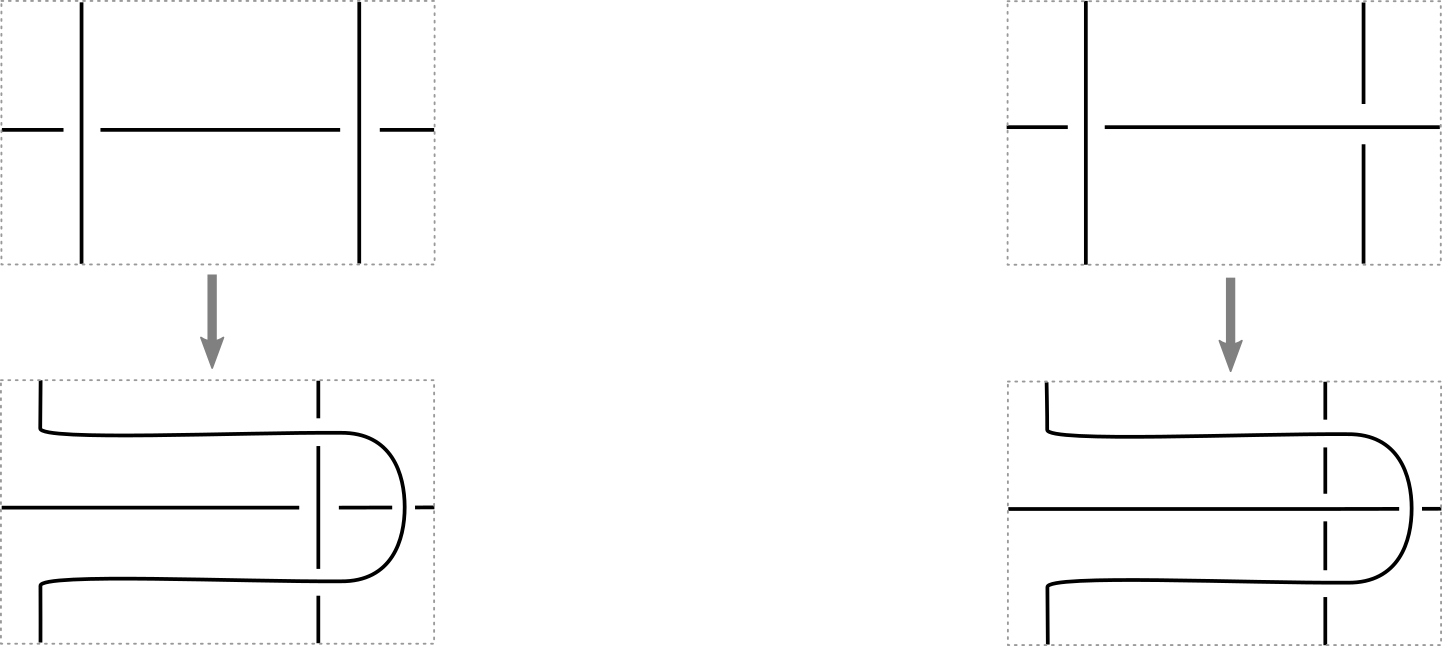}
\caption{The two paths in $\sfera$ and $\mathcal{G}_{S}(K^\prime)$. The grey arrows denote the sequence of $\Omega_2^+$-$\Omega_3$-$\Omega_2^-$ moves connecting the two diagrams.}
\label{fig:lacciostrano}
\end{figure}
 
Now, perform the sequence of $\Omega_2^+$-$\Omega_3$-$\Omega_2^-$ moves that takes the upper diagrams in Figure \ref{fig:lacciostrano}, and ends in the lower ones.
Note that these paths are well defined, since all the diagrams involved respect the condition $p_1=0$; thus we are able to actually determine the effect of these moves on the projections by the previous Proposition.
Consider Figure \ref{fig:lacciostrano2}; there are two distinct sequences of $\Omega_2^+$-$\Omega_3$ (differing by the choice of over/under passing for the first $\Omega_2^+$ move) starting from the diagram on the top-left of Figure \ref{fig:lacciostrano2} and ending in two different diagrams sharing the same projection. On the other hand, there is only one way to perform an $\Omega_2^+$ from the diagram on the top-right of the Figure in order to be able to complete the sequence with an $\Omega_3$ and obtain a diagram with the same projection as the other two. Again, these moves are all well defined thanks to Proposition \ref{riconosceproj}.

\begin{figure}[h!]
\includegraphics[width=13cm]{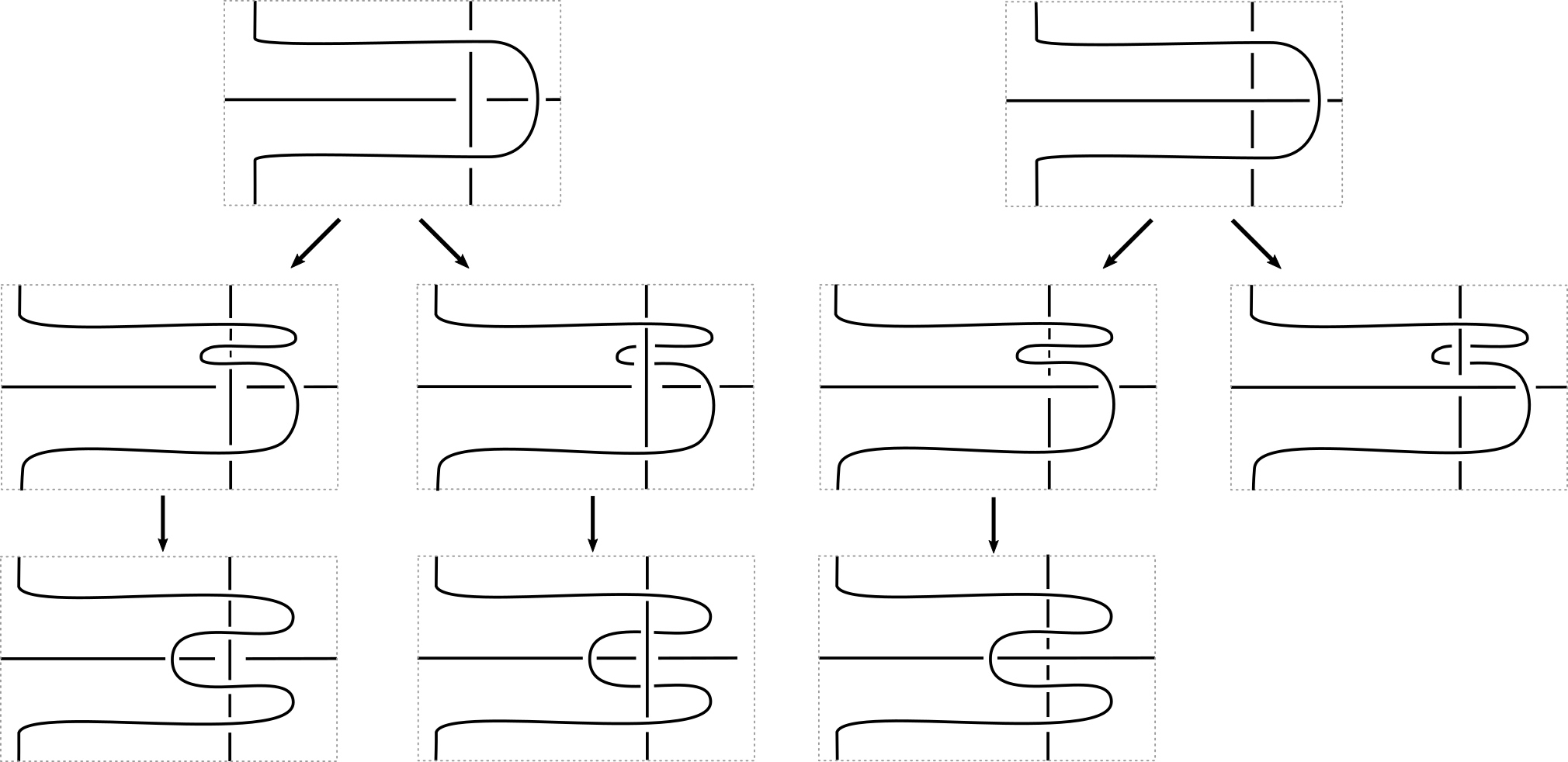}
\caption{The two paths in $\sfera$ and $\mathcal{G}_{S}(K^\prime)$. The top/middle arrows denote a $\Omega_2^+$ and $\Omega_3$ move respectively.}
\label{fig:lacciostrano2}
\end{figure}
Hence the two graphs can not coincide, since there is a path in one of the graphs which is not present in the other one, and we can conclude.
\end{proof}

The three previous propositions together with Proposition \ref{valenzabanale} can be easily seen to imply Theorem \ref{thm:completeness}, but as a matter of fact the result proved is even stronger, since it allows to recover the actual diagrams\footnote{For knots $K \neq \bigcirc$.} represented by some specific vertex of the graph, and not only its knot type. From the proof of Proposition \ref{riconosceproj} we are actually obtaining an embedding for the graph which is dual to the knot projection corresponding to the diagram $D_0$. Hence, this proves that we can actually get back the shape of any diagram not containing any region with $1$ edge (in the non-periodic case).\\ 

This next result follows directly from the proofs of the previous three Propositions:

\begin{cor}\label{cor:pallachar}
Let $K$ be a knot. For every vertex $D \in \sfera$ there exists an integer $R>0$ such that  $S_R(D)$ is \emph{characterizing}, meaning that this graph can only appear in $\sfera$. Moreover, in the non-periodic case, $R$ is computable.
\end{cor}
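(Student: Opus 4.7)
The plan is to observe that every step in the proofs of Propositions \ref{prop:determinaP}, \ref{riconosceproj} and \ref{distinguediagramma} stays within finite distance from the starting vertex, and that this distance is bounded by an explicit function of the combinatorics of $D$. Fixing this bound as $R$, the ball $S_R(D)$ will contain enough information to reconstruct the knot type of $K$ up to mirroring, ruling out the possibility that it is isomorphically embedded in $\mathcal{G}_S(K^\prime)$ for any $K^\prime \neq K, \overline{K}$.

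More precisely, I would first move from $D$ to a vertex $D_0$ with $p_1(D_0) = 0$, at distance at most $\#\Omega_1^-(D) \le p_1(D)$ (by performing the available $\Omega_1^-$ moves, which are detected by Theorem \ref{distinguemosse}). Next, Proposition \ref{prop:determinaP} reads $P(D_0)$ entirely from the $1$-ball $S(D_0)$, so this step contributes only a constant to $R$. Then comes the sparse-diagram construction of Proposition \ref{riconosceproj}: this consists of $\Omega_1^+$ multi-edges on selected edges of the initial projection, for a \emph{finite} number of choices (at most one for each edge of each region of the projection, so bounded by $\alpha(D_0)$), where the multiplicities $N_1 \gg N_2 \gg \cdots$ are chosen explicitly as a function of $P(D_0)$ and the gaps between the entries of its region sequence. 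The total length of the paths involved is thus bounded by $\sum_i N_i$, a quantity computable from $P(D_0)$. To that we add the bounded number of $\Omega_2^+$-$\Omega_3$-$\Omega_2^-$ sequences needed to recover the coherent rotational system at each vertex of the dual projection (Figures \ref{fig:sistorient} and \ref{fig:sistciclico}), which again contributes a bounded, computable quantity. Finally, Proposition \ref{distinguediagramma} requires performing a single $\Omega_2^+$-$\Omega_3$-$\Omega_2^-$ loop at each of the $\le cr(D_0)$ crossings to detect signs up to a global mirror, adding at most $3 \cdot cr(D_0)$ to the radius.

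Taking $R$ to be the sum of all these contributions, the ball $S_R(D)$ contains the vertices and edges used in the entire reconstruction pipeline. Now suppose $S_R(D)$ sits as an isomorphic subgraph of some $\mathcal{G}_S(K^\prime)$, centered at a vertex $\widetilde{D} \in \mathcal{D}(K^\prime)$. All the arguments leading to Theorem \ref{thm:completeness} are local to this ball: they only inspect valences, counts of triangles, and patterns of multi-edges inside $S_R(D)$, so they apply verbatim to the corresponding configuration in $\mathcal{G}_S(K^\prime)$. This reconstructs, from $S_R(\widetilde{D})$, a diagram of $K^\prime$ with the same projection and the same crossings (up to a global switch) as the diagram for $K$ reconstructed from $S_R(D)$. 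Hence $K^\prime = K$ or $\overline{K}$, and consequently $\mathcal{G}_S(K^\prime) \equiv \sfera$.

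The main obstacle is verifying computability in the non-periodic case, i.e.\ making the implicit inequalities $N_i \gg N_{i+1}$ explicit. This amounts to checking, at each step of the sparse construction, that the differences in positive valence predicted by Equations \eqref{eqn:r1+}--\eqref{eqn:changer2bis+} cannot be matched by any other legal move on a diagram whose region sequence is known; since these differences are linear in the $N_i$, a recursive bound of the form $N_{i+1} > \text{const}\cdot N_i$ (with constants depending on the current $P$-vector) suffices and is computable. In the periodic case, an additional detour is needed to reach a ball where Lemma \ref{lemma:miniprop} applies, but since the periods of $K$ are finite (\cite{period}) one still obtains a finite, if non-explicit, $R$.
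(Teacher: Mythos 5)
Your proposal is correct and takes essentially the same route as the paper, which simply deduces the corollary from the proofs of Propositions \ref{prop:determinaP}, \ref{riconosceproj} and \ref{distinguediagramma} by observing that the entire reconstruction pipeline takes place inside a ball of finite radius around $D$ (with the radius explicitly controllable, hence computable, in the non-periodic case); your accounting of the contributions, including the explicit recursion for the $N_i$, is exactly the intended elaboration. One minor inaccuracy that does not affect the conclusion: the distance from $D$ to a vertex with $p_1=0$ is not bounded by $p_1(D)$, since undoing the top curl of a tentacle exposes a new $1$-region, but it is bounded by $cr(D)$ (each $\Omega_1^-$ drops the crossing number), which is still readable from the graph by Theorem \ref{distinguemosse}, and one should take $R$ one unit larger than the furthest vertex whose valence is inspected, since valences are only visible one step inside the ball.
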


A similar argument should guarantee the completeness of the planar $\mathcal{R}$-graphs, even though the whole process is complicated by the fact that the presence of the external region does not allow a straightforward adjustment of Proposition \ref{prop:determinaP}.

\section{The blown-up Gordian graph}\label{blownup}

We can unify the Gordian and Reidemeister graphs in a single object, by a sort of ``blowup'' construction; just replace each vertex of the Gordian graph with the corresponding $\reid$. The edges between two knots in the Gordian graph can be split into edges between the diagrams realising the crossing changes.
\begin{defi}
Define the \emph{blown-up Gordian graphs} $\mathcal{G}^*$ and $\mathcal{G}^*_S$ respectively, as the graphs whose vertices are knot diagrams in the plane (respectively in $S^2$) up to the corresponding notion of diagram isotopy; there is an edge between two vertices if and only if they are connected by a single Reidemeister move or a crossing change.
\end{defi}

As in the previous setting, the valence of each vertex is finite.
For non-periodic diagrams we have $$v_*(D) = v(D) + cr(D),$$
where $v_*(D)$ denotes the valence of $D$ in $\mathcal{G}^*$, and $v(D)$ is the valence of $D$ in the corresponding $\mathcal{R}$-graph. For non-periodic diagrams we only get an inequality.

Note that $\mathcal{G}^*$ admits an order $2$ automorphism, induced by changing all crossing of each diagram, \emph{i.e.}$\;$taking the mirror image. The only fixed points of this automorphism are the diagrams of amphichiral knots which are equivalent to their mirror up to planar isotopy.

\begin{rmk}
There are embeddings of $\mathcal{G} (K) \hookrightarrow \mathcal{G}^*$ and $\mathcal{G}_S (K) \hookrightarrow \mathcal{G}^*_S$ for each $K \in \nodi$, and there are many crossing-change edges in both $\mathcal{G}^*$ and $\mathcal{G}^*_S$ connecting two diagrams in the same isotopy class; according to the \emph{cosmetic crossing conjecture} (\cite[Problem 1.58]{kirbylist}) all these should correspond to nugatory crossings. It would be interesting to explore the possible applications of these graphs to the conjecture.
\end{rmk}

If we look at the ball of radius $1$ in $\mathcal{G^*}$ about a diagram $D$, we find all the length $3$ paths of Theorem \ref{prop:lung3}, together with a new configuration, shown in Figure \ref{lungh3ingordian}. The fact that the only new triangle\footnote{That is, triangles that contain at least one crossing change.} appearing is actually this one, follows easily by considering Arnold's and Hass-Nowik's invariants, together with crossing number and writhe, as in Theorem \ref{prop:lung3}. More precisely, using these invariants we can restrict to cycles of the form $\Omega_1$-$\Omega_1$-$C$, where the $\Omega_1$s create crossings of opposite sign, and $C$ denotes a crossing change. Then, with the same line of thought as in Theorem \ref{prop:lung3}, we can prove  that the curls must lie in the same region by taking in account the self-touching number. It follows that the regions under the two curls have the same number of edges. However, we are not able to prove that the crossing change happens exactly on the curls.\\

\begin{figure}[h!]
\includegraphics[width=6cm]{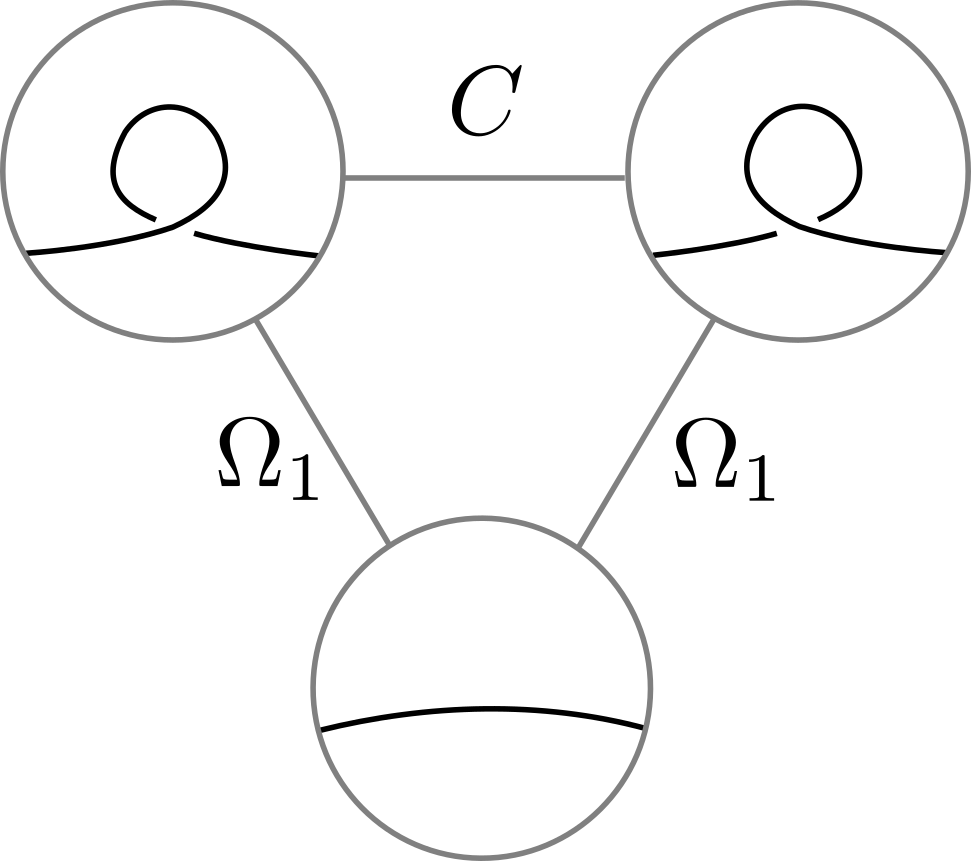}
\caption{The only other possible length $3$ path in $\mathcal{G^*}$ which is not a path in any $\reid$.}
\label{lungh3ingordian}
\end{figure}

By extending the proof of Theorem \ref{distinguemosse} to the blown-up graph it is possible to prove the analogous result; namely that $\mathcal{G}_*$ detects the crossing changes and Reidemeister moves.

It is also possible to employ the blown-up graph in quite different contexts, like modelling DNA pathways or consider walks on it to produce cryptographic protocols. These applications will be the subject of upcoming works.\\

Other related ongoing projects include a translation of the concepts outlined in this paper to a plethora of other settings; the rough idea is the following: given a recipe to present knots, and a finite set of moves to pass between equivalent presentations, one obtains a related graph. In an upcoming paper 
we are going to study what happens in the case of grids, braids, tangles, pointed and framed diagrams. We will also provide computations for the corresponding diagram complexity invariants, for low-crossing knots and some infinite family. Moreover, we are going to explore some of the connections between the $\mathcal{R}$-graphs defined here and the topology of the discriminant hypersurfaces in Arnold's and Vassiliev's constructions (\cite{arnold}, \cite{vass}).\\

We conclude with some questions.
\begin{ques}
Does there exists a periodic knot type $K$ such that the minimum of the complexity (in either graph) is not attained at a periodic diagram?
\end{ques}
\begin{ques}
Are all knot types simple (as in Definition \ref{defi:complessita})?
\end{ques}
\begin{ques}\label{quesperiodici}
If a diagram $D\subset S^2$ of a non-trivial knot is periodic, is it true that all other diagrams in $S(D)$ are not periodic?
\end{ques}
\begin{ques}\label{ques:minimo}
Is the $S^2$-graph obtained from a minimal set of Reidemeister moves a complete invariant?
\end{ques}
\begin{ques}
To what extent do the filtrations $\mathcal{F}(K)$ and $\widetilde{\mathcal{F}}(K)$ classify knot types?
\end{ques}
The following (hard) question was suggested by M. Lackenby:
\begin{ques}
We have shown that the isomorphism class of the $S^2$-Reidemeister graph is a complete knot invariant. Is the \emph{quasi-isometry} class of the graph is a complete invariant, or if not, to what extent does it distinguishes inequivalent knots, or detects interesting properties of the knot?
\end{ques}
The following question was asked by D. Cimasoni:
\begin{ques}
Which knot invariants (such as genus, absolute value of the writhe, polynomials...) can be extracted from the $\mathcal{R}$-graphs?
\end{ques}
\begin{ques}
Does the connectivity of the $\mathcal{R}$-graphs coincide with the minimal complexity?
\end{ques}
\begin{ques}
We have exhibited an infinite set of spheres representing non-trivial elements of $H_2(\mathcal{CG}(K);\Z)$ in Section \ref{global}. Are there any embedded closed surfaces of higher genus (note that such surfaces would automatically be non-trivial in homology)?
\end{ques}

\bibliographystyle{amsplain}

\end{document}